\def\myarabic#1{\normalfont(\roman{#1})}
\newlist{theoremlist}{enumerate}{1}
\setlist[theoremlist]{label=\myarabic{theoremlisti},ref={\myarabic{theoremlisti}},itemindent=0pt,labelindent=0pt,
leftmargin=*,noitemsep}
\renewcommand{\p@theoremlisti}{\perh@ps{\thetheorem}}
\protected\def\perh@ps#1#2{\textup{#1#2}}
\newcommand{\itemrefperh@ps}[2]{\textup{#2}}
\newcommand{\itemref}[1]{\begingroup\let\perh@ps\itemrefperh@ps\ref{#1}\endgroup}
\newtheorem{theorem}{Theorem}[section]
\newtheorem*{statement}{Theorem}
\newtheorem{lemma}[theorem]{Lemma}
\newtheorem{proposition}[theorem]{Proposition}
\newtheorem{corollary}[theorem]{Corollary}
\theoremstyle{definition}
\newtheorem{remark}[theorem]{Remark}
\theoremstyle{definition}
\theoremstyle{definition}
\theoremstyle{definition}
\newtheorem{example}[theorem]{Example}
\crefname{figure}{Figure}{Figures}
\def\Acal{\mathcal{A}}
\def\gbf{\mathbf{g}}\def\xbf{\mathbf{x}}
\def\one{{\mathbbm{1}}}
\def\C{\mathbb{C}}
\def\R{\mathbb{R}}
\def\Z{\mathbb{Z}}
\newcommand\parr[1]{{({#1})}}
\def\<{{\langle}}
\def\>{{\rangle}}
\def\la{{\lambda}}
\def\diag{{ \operatorname{diag}}}
\def\wt{\operatorname{wt}}
\def\inv{\operatorname{inv}}
\def\SL{\operatorname{SL}}
\def\Gr{\operatorname{Gr}}
\def\Z{{\mathbb Z}}
\def\R{{\mathbb R}}
\def\Gr{{\rm Gr}}
\def\diag{{\rm diag}}
\newcommand{\smat}[1]{\left[\begin{smallmatrix}
      #1
    \end{smallmatrix}\right]}
\def\X{\Xcal}
\def\u{h}
\def\bs{\backslash}
\def\t{{\mathbf{t}}}
\def\d#1{\dot{#1}}
\def\ds{\d{s}}
\def\dw{\d{w}}
\def\dv{\d{v}}
\def\X{\accentset{\circ}{X}}
\def\Rich_#1^#2{\accentset{\circ}{R}_{#1,#2}}
\def\Richcl_#1^#2{R_{#1,#2}}
\def\RRich_#1^#2{\accentset{\circ}{R}_{#1,#2}^\R}%
\def\RRichcl_#1^#2{R_{#1,#2}^\R}%
\def\Rtp_#1^#2{R_{#1,#2}^{>0}}
\def\Rtnn_#1^#2{R_{#1,#2}^{\geq0}}
\def\PR_#1^#2{\accentset{\circ}{\Pi}_{#1,#2}}%
\def\PRtp_#1^#2{\Pi_{#1,#2}^{>0}}%
\def\PRtnn_#1^#2{\Pi_{#1,#2}^{\geq0}}%
\def\PRcl_#1^#2{\Pi_{#1,#2}}%
\def\PRR_#1^#2{\accentset{\circ}{\Pi}_{#1,#2}^\R}%
\def\PRRcl_#1^#2{\Pi_{#1,#2}^\R}%
\def\sf{\operatorname{sf}}
\def\bt{{\mathbf{t}}}
\def\bw{{\mathbf{w}}}
\def\bv{{\mathbf{v}}}
\def\maxx{{\operatorname{max}}}
\def\hjmap{\kappa}
\def\hjmp_#1{\hjmap_{#1}}
\def\Uom_#1{U^{\diamond,-}_{#1}}
\def\Rsf_#1^#2{R_{#1,#2}^{\sf}}
\def\xrasim{\xrightarrow{\sim}}
\def\Richaff_#1^#2{\accentset{\circ}{\mathcal{R}}_{#1}^{#2}}
\def\bs{\backslash}
\def\Cast{\C^\ast}
\def\Povar_#1{\accentset{\circ}{\Pi}_{#1}}
\def\Povarcl_#1{\Pi_{#1}}
\def\RPovar_#1{\accentset{\circ}{\Pi}^\R_{#1}}
\def\RPovarcl_#1{\Pi^\R_{#1}}
\def\Povtp_#1{\Pi_{#1}^{>0}}
\def\Povtnn_#1{\Pi_{#1}^{\geq0}}
\def\Star_#1{\operatorname{Star}_{#1}}
\def\Startnn_#1{\operatorname{Star}^{\geq0}_{#1}}
\def\Link{\operatorname{Lk}}
\def\Lkx_#1{\Link_{#1}}
\def\Lkxx_#1^#2{\accentset{\circ}{\Link}_{#1}^{#2}}
\def\Lktxx_#1^#2{\Link^{>0}_{#1,#2}}
\def\Starxx_#1^#2{\operatorname{Star}_{#1,#2}}
\def\Startxx_#1^#2{\operatorname{Star}^{\geq0}_{#1,#2}}
\def\sctnn_#1{\sc^{\geq0}_{#1}}
\def\sctp_#1^#2{\sc^{>0}_{#1,#2}}
\def\eps{\varepsilon}
\def\Seps_#1{S_{#1}}
\def\Lktpe_#1^#2{\Link^{>0}_{#1,#2}}
\def\Lktnne_#1{\Link^{\geq0}_{#1}}
\def\Lktp_#1^#2{\Link^{>0}_{#1,#2}}
\def\Lktnn_#1{\Link^{\geq0}_{#1}}
\def\sc{Z}
\def\sco_#1^#2{\accentset{\circ}{\sc}_{#1,#2}}
\def\sccl_#1^#2{\sc_{#1}^{#2}}
\def\Y{\mathcal{Y}}
\def\Yo_#1{\accentset{\circ}{\Y}_{#1}}
\def\Ycl_#1{\Y_{#1}}
\def\Ytp_#1{\Y_{#1}^{>0}}
\def\strg(#1){\normg{#1}}
\def\normg#1{\|#1\|}
\def\Jo{J_\bv^\circ}
\def\Spec{\operatorname{Spec}}
\def\int{{\operatorname{init}}}
\def\NW{\operatorname{NW}}
\def\WNW{\overline{\NW}}
\def\shift{\operatorname{shift}}
\let\xto\xrightarrow
\def\xxto#1{\xhookrightarrow{#1}}
\def\tQ{{\tilde Q}}
\title[Positroid varieties and cluster algebras]{Positroid varieties and cluster algebras}
\author{Pavel Galashin}
\address{Department of Mathematics, University of California, Los Angeles, 520 Portola Plaza,
Los Angeles, CA 90025, USA}
\email{\href{mailto:galashin@math.ucla.edu}{galashin@math.ucla.edu}}
\author{Thomas Lam}
\address{Department of Mathematics, University of Michigan, 2074 East Hall, 530 Church Street, Ann Arbor, MI 48109-1043, USA}
\email{\href{mailto:tfylam@umich.edu}{tfylam@umich.edu}}
\thanks{P.G.\ was supported by an Alfred P. Sloan Research Fellowship and by the National Science Foundation under Grants No.~DMS-1954121 and No.~DMS-2046915. T.L.\ was supported by a von Neumann Fellowship from the Institute for Advanced Study and by Grants No.~DMS-1464693 and No.~DMS-1953852 from the National Science Foundation.}
\subjclass[2010]{
  Primary:
  13F60. %
  Secondary:
  14M15. %
}
\keywords{Cluster algebra, Grassmannian, positroid variety, total positivity, preprojective algebra, twist map.}
\date{\today}
\def\QD{Q_D}
\def\QDp{Q_{D'}}
\def\AQD{\Acal(\QD)}
\def\AQT{\Acal(\tQ)}
\def\PJ{P^J_-}
\def\rW{r_{\leftarrow}}
\def\rN{r_{\uparrow}}
\def\rNW{r'}
\begin{document}

\begin{abstract}
We show that the coordinate ring of an open positroid variety coincides with the cluster algebra associated to a Postnikov diagram. This confirms conjectures of Postnikov, Muller--Speyer, and Leclerc, and generalizes results of Scott and Serhiyenko--Sherman-Bennett--Williams.
\end{abstract}

\selectlanguage{english}

\numberwithin{equation}{section}

\def\Vi#1{v^{\parr{#1}}}
\def\Vii#1{\bar v^{\parr{#1}}}
\def\Wi#1{w^{\parr{#1}}}
\def\vi#1{v_{\parr{#1}}}
\def\wi#1{w_{\parr{#1}}}

\maketitle

Positroid varieties are subvarieties of the Grassmannian that first appeared in the study of total positivity and Poisson geometry \cite{Lus, Pos, BGY, KLS}.  In this paper we establish the following result; see \cref{thm:main}.
\begin{statement}
The coordinate ring $\C[\PR_v^w]$ of an open positroid variety $\PR_v^w$ is a cluster algebra.  
\end{statement}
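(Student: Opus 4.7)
The plan is to identify the cluster algebra $\AQD$ attached to a Postnikov diagram $D$ of type $(v,w)$ with the coordinate ring $\C[\PR_v^w]$ by transporting a cluster structure from the larger open Richardson variety in $G/B$ and then promoting the resulting injection to an isomorphism via a codimension-$2$ argument.

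First, I would establish a cluster algebra structure on $\C[\Rich_v^w]$, where $\Rich_v^w \subset G/B$ is the corresponding open Richardson variety, with seeds indexed by reduced expressions for $w$ and cluster variables realized as generalized/flag minors. Leclerc's categorification by modules over the preprojective algebra supplies a cluster subalgebra of $\C[\Rich_v^w]$; the first subtask is either to promote this inclusion to an equality or to produce such a structure directly, for instance by adapting the Fomin--Zelevinsky cluster structure on double Bruhat cells. When $v$ and $w$ are minimal length representatives in the parabolic quotient $W^J$ corresponding to $\Gr(k,n) = G/P$, the projection $\Rich_v^w \to \PR_v^w$ is an isomorphism, so the structure transfers to a candidate cluster structure on $\C[\PR_v^w]$.

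Next I would match this transferred seed with the Postnikov seed $(Q_D, \{\Delta_I\})$, whose cluster variables are Plücker coordinates indexed by the target labels of the faces of $D$. The key geometric tool is the Muller--Speyer twist (and its generalization from the open Schubert setting to arbitrary open positroid varieties): a regular automorphism of $\PR_v^w$ that converts source-labeled into target-labeled Plücker coordinates and intertwines the two cluster structures. Using the twist, the Postnikov seed is identified with a seed coming from a specific reduced word in the Richardson picture, yielding an injection $\AQD \hookrightarrow \C[\PR_v^w]$ whose image contains every Plücker coordinate appearing as a target label.

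Finally, upgrading this injection to an isomorphism calls for the starfish-type criterion of Gross--Hacking--Keel--Kontsevich and Muller: the initial cluster variables must be regular, irreducible, and pairwise coprime, and the cluster algebra must coincide with its upper cluster algebra so that no regular function is missed. I expect this final verification to be the main obstacle. Irreducibility and coprimality of the $\Delta_I$ can be read off from the positroid divisor structure of $\PR_v^w$, but controlling the codimension-$2$ behaviour of exchange relations along the frozen boundary is delicate. It should proceed by induction on $\dim \PR_v^w$, using that the boundary divisors are themselves positroid varieties of strictly smaller dimension, already equipped (inductively) with matching cluster structures compatible with the natural degenerations of the Postnikov diagram $D$.
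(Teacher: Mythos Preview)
Your outline has the right two-step shape (produce an injection $\AQD\hookrightarrow\C[\PR_v^w]$, then prove surjectivity), but the logical structure of your first step is circular. You propose to ``establish a cluster algebra structure on $\C[\Rich_v^w]$'' by ``promoting Leclerc's inclusion to an equality''; but Leclerc's conjecture that his subalgebra equals $\C[\Rich_v^w]$ is precisely (the Richardson version of) the theorem you are trying to prove, so you cannot invoke it as input. The alternative you offer---importing the double Bruhat cell structure---does not apply directly either, since open Richardsons are not double Bruhat cells and no such transfer was available at the time. The paper avoids this trap: it uses Leclerc's result only as an \emph{injection} $\Acal(\tilde Q)\hookrightarrow\C[\PR_v^w]$, and the substantive work in Sections~3--4 is the purely combinatorial identification of Leclerc's preprojective-algebra quiver $\tilde Q$ with the Le-diagram quiver $Q_D$, together with a direct computation (Lemma~4.2) showing that Leclerc's generalized minors restrict to the face-label Pl\"ucker coordinates. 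The Muller--Speyer twist plays no role in this matching; you have its role inverted.

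For surjectivity your proposal diverges more genuinely from the paper. You suggest the starfish lemma plus a codimension-$2$ induction over boundary positroid divisors. The paper instead argues as follows: compose $\eta$ with the Muller--Speyer twist to land in the Laurent ring of the Deodhar torus parameters $\mathbf t$; compute the echelon form of $\mathbf g_{\mathbf v,\mathbf w}(\mathbf t)$ explicitly as path sums in $G(D)$ (Proposition~5.1); then check by a direct path-pairing cancellation (Lemma~5.4) that every regular function on $\PR_v^w$ is Laurent in each once-mutated cluster $\mathbf q'_r$. Combined with full rank of $\tilde B(Q_D)$ (Proposition~5.8), BFZ's upper-bound theorem, and Muller's $\Acal=\overline\Acal$ for locally acyclic cluster algebras, this gives $\C[\PR_v^w]\subseteq\eta^{(\tau)}(\AQD)$. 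Your starfish route would still need the locally-acyclic input for $\Acal=\overline\Acal$, but would replace the explicit Laurent computation by irreducibility/coprimality checks; this is plausible, though your proposed induction on boundary divisors is vague---freezing a cluster variable is not the same as restricting to a boundary positroid, and the compatibility you would need is not automatic.
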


For the top-dimensional open positroid variety, this is due to Scott \cite{Sco}, a result that motivated much of the subsequent work. Combinatorially, positroid varieties are parametrized by Postnikov diagrams, and each such diagram gives rise to a quiver whose vertices are labeled by Pl\"ucker coordinates on the Grassmannian; see~\cite{Pos,Sco}. This data gives rise to a cluster algebra of~\cite{FZ} whose cluster variables are rational functions on the Grassmannian, and since the work of Scott, it has been expected that this cluster algebra coincides with the coordinate ring of $\PR_v^w$. This conjecture was made explicit by Muller and Speyer \cite[Remark 4.6]{MStwist}, and was established recently in the special case of Schubert varieties by Serhiyenko--Sherman-Bennett--Williams~\cite{SSBW}.  Another closely related conjecture was given by Leclerc~\cite{Lec}, who constructed a cluster subalgebra of $\C[\PR_v^w]$ using representations of preprojective algebras.  We show (\cref{cor:same}) that these two cluster structures coincide. These cluster structures have also been compared in~\cite{SSBW}; our work differs from theirs by switching from a left-sided to a right-sided quotient for the flag variety, i.e., from $B_-\bs G$ to $G/B_-$; see \cref{rmk:easy_hard}. 

Leclerc's conjectures and results apply in the more general setting of open Richardson varieties.  We hope to return to cluster structures of open Richardson varieties in the future. Some other closely related cluster structures include double Bruhat cells \cite{BFZ,GY}, partial flag varieties \cite{GLS1}, and unipotent groups \cite{GLS2}.

Combining our main result with the well-developed machinery of cluster algebras has many consequences for the structure of open positroid varieties; see e.g. the introduction of~\cite{SSBW}. For instance,  the existence of a green-to-red sequence \cite{FS}, together with the constructions of \cite{GHKK} endow $\C[\PR_v^w]$ with a basis of \emph{theta functions} with positive structure constants.  Additionally, the results of \cite{LS} imply that $H^*(\PR_v^w,\C)$ satisfies the curious Lefschetz property, which has implications for  extension groups of certain Verma modules that we aim to explore in future work. 

Finally, we show that the totally nonnegative part $\PRtp_v^w$ of $\PR_v^w$ (as defined by~\cite{Lus,Pos}) is precisely the subset of $\PR_v^w$ where all cluster variables take positive real values; see \cref{cor:tnn}.

\subsection*{Acknowledgements.} We are grateful to Khrystyna Serhiyenko, Melissa Sherman-Bennett, and Lauren Williams for their comments on an earlier version of this manuscript and for conversations regarding the results of~\cite{SSBW}. These conversations motivated the start of this project and inspired the results in \cref{sec:Leclerc_cluster_algebra}. In addition, we thank Melissa Sherman-Bennett for pointing out a sign issue in \cref{lem:rotation}. 
The second author thanks David Speyer for discussions that led to Proposition \ref{prop:fullrank}. Finally, we are grateful to the anonymous referee for their careful reading of the text.

\subsection*{Outline}
We discuss the combinatorics of Le-diagrams in \cref{sec:Le_cluster}. The cluster algebra $\AQD$ coming from a Le-diagram $D$ consists of some rational functions on the Grassmannian. As we discuss in \cref{sec:main}, in order to prove our main result, one needs to show two inclusions: $\AQD\subseteq \C[\PR_v^w]$ and $\AQD\supseteq \C[\PR_v^w]$. For the first inclusion, we rely on the results of Leclerc~\cite{Lec}. In particular, following ideas of~\cite{SSBW}, we show in \cref{sec:Leclerc_cluster_algebra} that the cluster algebra of~\cite{Lec} is isomorphic to $\AQD$ (i.e., they have isomorphic quivers). We then prove the first inclusion $\AQD\subseteq \C[\PR_v^w]$ in \cref{sec:positroid-varieties}; see \cref{cor:Lec}. We show the second inclusion $\AQD\supseteq \C[\PR_v^w]$ in \cref{sec:surj}, using the results of Muller--Speyer~\cite{MStwist,MSLA}, of Muller \cite{Mul}, and of Berenstein--Fomin--Zelevinsky~\cite{BFZ}.

\medskip
Throughout the paper, we fix a positive integer $n$, and an integer $k \in [n]:=\{1,2,\ldots,n\}$. For $a,b\in\Z$, we let $[a,b]:=\{a,a+1,\dots,b\}$ if $a\leq b$, and $[a,b]:=\emptyset$ otherwise.

\section{Le-diagram cluster algebra}\label{sec:Le_cluster}

Let $W = S_n$ be the symmetric group on $n$ letters. For $i\in [n-1]$, let $s_i\in W$ denote the simple transposition of $i$ and $i+1$. Every permutation $w \in W$ can be written as a reduced word $w=s_{i_1}\cdots s_{i_m}$ (where $m=\ell(w)$ is the length of $w$).  In this case, $\bw:=(i_1,\dots,i_m)$ is called a \emph{reduced expression} for $w$. We multiply permutations right-to-left; in particular, for $j\in [n]$ and $w=s_{i_1}\cdots s_{i_m}$, we let $w(j):=s_{i_1}(\dots(s_{i_m}(j))\dots )$. For $A\subset[n]$, we denote $wA:=\{w(a)\mid a\in A\}$. 

Let $J = [n] \setminus \{k\}$, and denote by $W^J \subset W$ the set of \emph{$k$-Grassmannian permutations}, i.e., permutations $w\in W$ satisfying $w(1)<\dots<w(k)$ and $w(k+1)<\dots<w(n)$.  In other words, we have $w\in W^J$ if and only if $w=1$ or each reduced word for $w$ ends with $s_k$. 

Let $Q^J$ denote the set of pairs $(v,w)$ where  $w \in W^J$ and $v \leq w$ in the Bruhat order on $W$. The elements of $Q^J$ label positroid varieties; see \cref{sec:pos}. By~\cite[Lemma~3.5]{MR}, every reduced expression $\bw=(i_1,\dots,i_m)$ for $w$ contains a unique ``rightmost'' reduced subexpression $\bv$ for $v$, called the \emph{positive distinguished subexpression}. We let $\Jo\subset [m]$ denote the set of indices \emph{not} used in $\bv$.

\def\fdot{\bullet}

\subsection{Le-diagrams and subexpressions}\label{sec:QJtoLe}
We use English notation for Young diagrams and label their boxes in matrix notation.  A \emph{Le-diagram} $D$ is a Young diagram $\lambda$, contained in a $k \times (n-k)$ rectangle, together with a filling of some of its boxes with dots, satisfying the following condition: if a box $b$ is both below a dot and to the right of a dot, then $b$ must contain a dot.

\def\borderInnerSep{2pt}
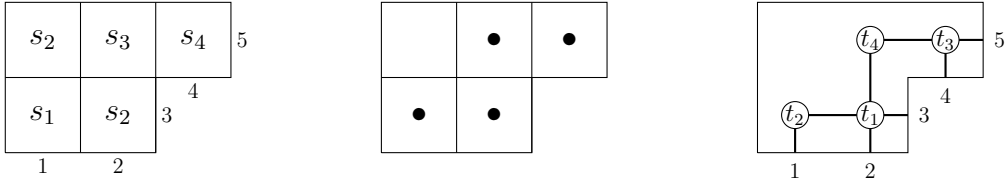
\begin{figure}
\begin{tikzpicture}
\draw (0,0)-- (0,2)--(3,2)--(3,1)--(2,1)--(2,0)--(0,0);
\draw (1,0)--(1,2);
\draw (2,0)--(2,2);
\draw (0,1)--(2,1);

\def\nodescl{0.7}
\node[anchor=north, inner sep=\borderInnerSep] (A) at (0.5,0) {\scalebox{\nodescl}{$1$}};
\node[anchor=north, inner sep=\borderInnerSep] (A) at (1.5,0) {\scalebox{\nodescl}{$2$}};
\node[anchor=north, inner sep=\borderInnerSep] (A) at (2.5,1) {\scalebox{\nodescl}{$4$}};
\node[anchor=west, inner sep=\borderInnerSep] (A) at (2,0.5) {\scalebox{\nodescl}{$3$}};
\node[anchor=west, inner sep=\borderInnerSep] (A) at (3,1.5) {\scalebox{\nodescl}{$5$}};
\node (A) at (0.5,0.5) {$s_1$};
\node (A) at (1.5,0.5) {$s_2$};
\node (A) at (0.5,1.5) {$s_2$};
\node (A) at (1.5,1.5) {$s_3$};
\node (A) at (2.5,1.5) {$s_4$};
\begin{scope}[shift={(5,0)}]
\draw (0,0)-- (0,2)--(3,2)--(3,1)--(2,1)--(2,0)--(0,0);
\draw (1,0)--(1,2);
\draw (2,0)--(2,2);
\draw (0,1)--(2,1);
\def\nodescl{0.7}
\node (A) at (0.5,0.5) {$\fdot$};
\node (A) at (1.5,0.5) {$\fdot$};
\node (A) at (1.5,1.5) {$\fdot$};
\node (A) at (2.5,1.5) {$\fdot$};
\end{scope}

\begin{scope}[shift={(10,0)}]

\def\nodescl{0.7}
\draw (0,0)-- (0,2)--(3,2)--(3,1)--(2,1)--(2,0)--(0,0);
\node[anchor=north] (A) at (0.5,0) {\scalebox{\nodescl}{$1$}};
\node[anchor=north] (A) at (1.5,0) {\scalebox{\nodescl}{$2$}};
\node[anchor=north] (A) at (2.5,1) {\scalebox{\nodescl}{$4$}};
\node[anchor=west] (A) at (2,0.5) {\scalebox{\nodescl}{$3$}};
\node[anchor=west] (A) at (3,1.5) {\scalebox{\nodescl}{$5$}};
\draw[line width=0.8pt] (0.5,0.5)--(2,0.5);
\draw[line width=0.8pt] (1.5,1.5)--(3,1.5);
\draw[line width=0.8pt] (1.5,1.5)--(1.5,0);
\draw[line width=0.8pt] (0.5,0.5)--(0.5,0);
\draw[line width=0.8pt] (2.5,1.5)--(2.5,1);
\node[draw,circle,scale=0.8,inner sep=0pt,fill=white] (A) at (0.5,0.5) {$t_2$};
\node[draw,circle,scale=0.8,inner sep=0pt,fill=white] (A) at (1.5,0.5) {$t_1$};
\node[draw,circle,scale=0.8,inner sep=0pt,fill=white] (A) at (2.5,1.5) {$t_3$};
\node[draw,circle,scale=0.8,inner sep=0pt,fill=white] (A) at (1.5,1.5) {$t_4$};
\end{scope}

\end{tikzpicture}
  \caption{\label{fig:Le} The Young diagram $\la$, Le-diagram $D$, and graph $G(D)$ corresponding to $(v,w) = (s_2,s_2s_1s_4s_3s_2)$.}
\end{figure}

We describe a well-known bijection~\cite[Section~20]{Pos} between elements of $Q^J$ and Le-diagrams. First, Grassmannian permutations $w \in W^J$ are in bijection with Young diagrams $\lambda \subseteq k \times (n-k)$:  placing $s_{k+j-i}$ into the box $(i,j)$ of $\lambda$, a reduced word for $w$ is obtained by reading the boxes from right to left along each row, starting from the lowest row.  The southeastern boundary edges of $\la$ are labeled $1,2,\ldots,n$ from bottom-left to top-right. Thus the southern boundary edges are labeled by the elements of $w[k+1,n]$.

Given $v \leq w$, we mark the letters \emph{not} used by the positive distinguished subexpression for $v$ with a dot, and this gives a Le-diagram denoted $D(v,w)$. For example, if $(v,w) = (s_2,s_2s_1s_4s_3s_2)$, we have the Young diagram $\la=(3,2)$ and the Le-diagram $D(v,w)$ in \cref{fig:Le}(left and middle). Note that $w[k+1,n]=w\{3,4,5\} = \{1,2,4\}$ are the labels of the southern boundary edges. 

Throughout the paper, we assume $(v,w)\in Q^J$ and denote $D:=D(v,w)$. We also fix a choice of $\bw=(i_1,\dots,i_m)$, $\bv$, and $\Jo$ as above.

\subsection{The graph $G(D)$}
 To a Le-diagram $D$ we associate a planar graph $G(D)$ embedded into the disk.  The boundary of $\lambda$ is taken to be the boundary of the disk, and boundary vertices are placed at the east and south boundary steps of $\lambda$, labeled $1,2,\ldots$ in counterclockwise order, starting from the southwest corner of $\la$.  From each dot in $D$, we draw a hook: one line going eastward, and one line going southward, until they hit the boundary.  The interior vertices of $G(D)$ correspond to the dots of $D$. Each dot of $D$ corresponds to an element $r\in \Jo$, and we label the associated vertex of $G(D)$ by $t_r$. The edges of $G(D)$ are horizontal or vertical line segments connecting two dots. See \cref{fig:Le}(right).

\subsection{Quiver}\label{Le_quiver}
A \emph{quiver} $Q$ is a directed graph without directed cycles of length $1$ or $2$. An \emph{ice quiver} is a quiver $Q$ such that each vertex of $Q$ is declared to be either \emph{frozen} or \emph{mutable}. We always assume that an ice quiver contains no arrows between frozen vertices. In this section, we explain how to associate an ice quiver $\QD$ to a Le-diagram $D=D(v,w)$. 

For each $r\in \Jo$, the vertex of $G(D)$ labeled by $t_r$ is the northwestern corner of some face of $G(D)$, and we label this face by $F_r$. Label the remaining face of $G(D)$ (the one adjacent to the northwestern boundary of $\la$) by $F_0$. Thus the neighborhood of any vertex of $G(D)$ looks as in \cref{fig:neigh}.
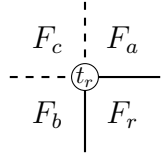
\begin{figure}
\begin{tikzpicture}
\draw[dashed, line width=0.8pt] (0,1)--(0,0);
\draw[dashed, line width=0.8pt] (-1,0)--(0,0);
\draw[line width=0.8pt] (0,0)--(1,0);
\draw[line width=0.8pt] (0,0)--(0,-1);
\node[draw,circle,scale=0.8,inner sep=0pt,fill=white] (A) at (0,0) {$t_r$};
\node (A) at (0.5,-0.5) {$F_r$};
\node (A) at (0.5,0.5) {$F_a$};
\node (A) at (-0.5,0.5) {$F_c$};
\node (A) at (-0.5,-0.5) {$F_b$};
\end{tikzpicture}
\caption{\label{fig:neigh} The neighborhood of a vertex in $G(D)$. The dashed lines may or may not be present, and $F_a,F_b,F_c$ are the labels of the corresponding faces. Thus some of them may coincide: we may have either $c=a$, or $c=b$, or both.}
\end{figure}

Construct a quiver $\QD$ whose vertices are $\{F_r\}_{r\in \Jo}$, i.e., the faces of $G(D)$ excluding $F_0$.  For each $r\in \Jo$,  depending on the local structure of $G(D)$ near the vertex labeled $t_r$ (cf. \cref{fig:neigh}), $\QD$ contains the arrows shown in \cref{fig:quiver_rules}.
\def\leop{0.4}
\begin{figure}
\begin{tikzpicture}
\def\qlw{1pt}
\def\qop{1}

\def\Finnersep{2pt}
\def\stp{3}

\node(A) at (0,0){
\begin{tikzpicture}
\draw[line width=0.8pt,opacity=\leop] (0,0)--(1,0);
\draw[line width=0.8pt,opacity=\leop] (0,0)--(0,-1);
\node[draw=white,circle,scale=0.8,inner sep=0pt,fill=white] (A) at (0,0) {\textcolor{white}{$t_r$}};
\node[draw,circle,scale=0.8,inner sep=0pt,fill=white,opacity=\leop] (A) at (0,0) {$t_r$};
\node[blue,inner sep=\Finnersep] (Fr) at (0.5,-0.5) {$F_r$};
\node[blue,inner sep=\Finnersep] (Fc) at (-0.5,0.5) {$F_c$};
\draw[->,blue,opacity=\qop,line width=\qlw] (Fr)--(Fc);
\end{tikzpicture}
};

\node(A) at (\stp,0){
\begin{tikzpicture}
\draw[line width=0.8pt,opacity=\leop] (-1,0)--(1,0);
\draw[line width=0.8pt,opacity=\leop] (0,0)--(0,-1);
\node[draw=white,circle,scale=0.8,inner sep=0pt,fill=white] (A) at (0,0) {\textcolor{white}{$t_r$}};
\node[draw,circle,scale=0.8,inner sep=0pt,fill=white,opacity=\leop] (A) at (0,0) {$t_r$};
\node[blue,inner sep=\Finnersep] (Fr) at (0.5,-0.5) {$F_r$};
\node[blue,inner sep=\Finnersep] (Fc) at (0,0.5) {$F_c$};
\node[blue,inner sep=\Finnersep] (Fb) at (-0.5,-0.5) {$F_b$};
\draw[->,blue,opacity=\qop,line width=\qlw] (Fr)--(Fb);
\end{tikzpicture}
};

\node(A) at (2*\stp,0){
\begin{tikzpicture}
\draw[line width=0.8pt,opacity=\leop] (0,0)--(1,0);
\draw[line width=0.8pt,opacity=\leop] (0,1)--(0,-1);
\node[draw=white,circle,scale=0.8,inner sep=0pt,fill=white] (A) at (0,0) {\textcolor{white}{$t_r$}};
\node[draw,circle,scale=0.8,inner sep=0pt,fill=white,opacity=\leop] (A) at (0,0) {$t_r$};
\node[blue,inner sep=\Finnersep] (Fr) at (0.5,-0.5) {$F_r$};
\node[blue,inner sep=\Finnersep] (Fa) at (0.5,0.5) {$F_a$};
\node[blue,inner sep=\Finnersep] (Fc) at (-0.5,0) {$F_c$};
\draw[->,blue,opacity=\qop,line width=\qlw] (Fr)--(Fa);
\end{tikzpicture}
};

\node(A) at (3*\stp,0){
\begin{tikzpicture}
\draw[line width=0.8pt,opacity=\leop] (-1,0)--(1,0);
\draw[line width=0.8pt,opacity=\leop] (0,1)--(0,-1);
\node[draw=white,circle,scale=0.8,inner sep=0pt,fill=white] (A) at (0,0) {\textcolor{white}{$t_r$}};
\node[draw,circle,scale=0.8,inner sep=0pt,fill=white,opacity=\leop] (A) at (0,0) {$t_r$};
\node[blue,inner sep=\Finnersep] (Fr) at (0.5,-0.5) {$F_r$};
\node[blue,inner sep=\Finnersep] (Fa) at (0.5,0.5) {$F_a$};
\node[blue,inner sep=\Finnersep] (Fc) at (-0.5,0.5) {$F_c$};
\node[blue,inner sep=\Finnersep] (Fb) at (-0.5,-0.5) {$F_b$};
\draw[->,blue,opacity=\qop,line width=\qlw] (Fr)--(Fa);
\draw[->,blue,opacity=\qop,line width=\qlw] (Fr)--(Fb);
\draw[->,blue,opacity=\qop,line width=\qlw] (Fc)--(Fr);
\end{tikzpicture}
};

\end{tikzpicture}
\caption{\label{fig:quiver_rules} Local rules for constructing a quiver from a Le-diagram.}
\end{figure}
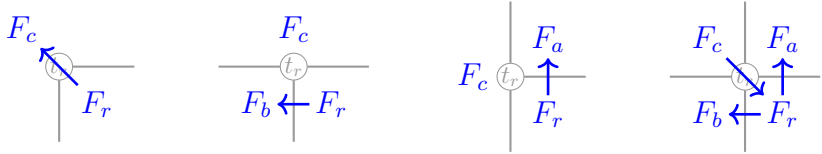

\def\bound{\partial \Jo}

\noindent The boundary (resp., interior) faces of $G(D)$ are designated as frozen (resp., mutable) vertices of $\QD$. We let $\bound\subset \Jo$  be the set of $r\in \Jo$ such that $F_r$ labels a boundary face of $G(D)$, i.e., is a frozen vertex of $\QD$.
 Some of the arrows in \cref{fig:quiver_rules} could connect frozen vertices, in which case we omit those arrows from $\QD$.

\begin{example}\label{ex:Le_big}
Let $k=3$, $n=6$, and $(v,w)=(s_2s_4,s_2s_1s_4s_3s_2s_5s_4s_3)$. The graph $G(D)$ is shown in \cref{fig:Le_quiver}(left), and the quiver $\QD$ is shown in \cref{fig:Le_quiver}(right). The only mutable vertex of $\QD$ is $F_8$, the vertices $F_1,F_2,F_3,F_4,F_6$ are frozen. Dashed arrows connect frozen vertices and therefore are not present in $\QD$.

\begin{figure}
\scalebox{0.9}{
\begin{tikzpicture}

\def\scl{1.3}
\def\Fscl{0.9}
\def\Fsep{4pt}
\def\arrlen{0.5}

\node(X) at (0,0){
\begin{tikzpicture}[scale=\scl]
\def\nodescl{0.7}
\draw (0,0)-- (2,0)--(2,1)--(3,1)--(3,3)--(0,3)--(0,0);
\node[anchor=north] (A) at (0.5,0) {\scalebox{\nodescl}{$1$}};
\node[anchor=north] (A) at (1.5,0) {\scalebox{\nodescl}{$2$}};
\node[anchor=north] (A) at (2.5,1) {\scalebox{\nodescl}{$4$}};
\node[anchor=west] (A) at (2,0.5) {\scalebox{\nodescl}{$3$}};
\node[anchor=west] (A) at (3,1.5) {\scalebox{\nodescl}{$5$}};
\node[anchor=west] (A) at (3,2.5) {\scalebox{\nodescl}{$6$}};
\draw[line width=0.8pt] (0.5,0.5)--(2,0.5);
\draw[line width=0.8pt] (1.5,1.5)--(3,1.5);
\draw[line width=0.8pt] (1.5,1.5)--(1.5,0);
\draw[line width=0.8pt] (0.5,0.5)--(0.5,0);
\draw[line width=0.8pt] (2.5,2.5)--(2.5,1);
\draw[line width=0.8pt] (0.5,2.5)--(3,2.5);
\draw[line width=0.8pt] (0.5,2.5)--(0.5,0);

\node[draw,circle,scale=0.8,inner sep=0pt,fill=white] (A2) at (0.5,0.5) {$t_2$};
\node[draw,circle,scale=0.8,inner sep=0pt,fill=white] (A1) at (1.5,0.5) {$t_1$};
\node[draw,circle,scale=0.8,inner sep=0pt,fill=white] (A3) at (2.5,1.5) {$t_3$};
\node[draw,circle,scale=0.8,inner sep=0pt,fill=white] (A4) at (1.5,1.5) {$t_4$};
\node[draw,circle,scale=0.8,inner sep=0pt,fill=white] (A6) at (2.5,2.5) {$t_6$};
\node[draw,circle,scale=0.8,inner sep=0pt,fill=white] (A8) at (0.5,2.5) {$t_8$};

\node[scale=\Fscl,blue,anchor=north west,inner sep=\Fsep] (F1) at (A1.south east) {$F_1$};
\node[scale=\Fscl,blue,anchor=north west,inner sep=\Fsep] (F2) at (A2.south east) {$F_2$};
\node[scale=\Fscl,blue,anchor=north west,inner sep=\Fsep] (F3) at (A3.south east) {$F_3$};
\node[scale=\Fscl,blue,anchor=north west,inner sep=\Fsep] (F4) at (A4.south east) {$F_4$};
\node[scale=\Fscl,blue,anchor=north west,inner sep=\Fsep] (F6) at (A6.south east) {$F_6$};
\node[scale=\Fscl,blue,anchor=north west,inner sep=\Fsep] (F8) at (A8.south east) {$F_8$};
\node[scale=\Fscl,blue,anchor=south east,inner sep=\Fsep] (outer) at (A8.north west) {$F_0$};
\end{tikzpicture}
};

\node(X) at (6,0){
\begin{tikzpicture}[scale=\scl]
\def\nodescl{0.7}
\begin{scope}[opacity=\leop]
\draw (0,0)-- (2,0)--(2,1)--(3,1)--(3,3)--(0,3)--(0,0);
\node[anchor=north] (A) at (0.5,0) {\scalebox{\nodescl}{$1$}};
\node[anchor=north] (A) at (1.5,0) {\scalebox{\nodescl}{$2$}};
\node[anchor=north] (A) at (2.5,1) {\scalebox{\nodescl}{$4$}};
\node[anchor=west] (A) at (2,0.5) {\scalebox{\nodescl}{$3$}};
\node[anchor=west] (A) at (3,1.5) {\scalebox{\nodescl}{$5$}};
\node[anchor=west] (A) at (3,2.5) {\scalebox{\nodescl}{$6$}};
\draw[line width=0.8pt] (0.5,0.5)--(2,0.5);
\draw[line width=0.8pt] (1.5,1.5)--(3,1.5);
\draw[line width=0.8pt] (1.5,1.5)--(1.5,0);
\draw[line width=0.8pt] (2.5,2.5)--(2.5,1);
\draw[line width=0.8pt] (0.5,2.5)--(3,2.5);
\draw[line width=0.8pt] (0.5,2.5)--(0.5,0);
\end{scope}
\node[circle,scale=0.8,inner sep=0pt,fill=white] (A2) at (0.5,0.5) {$\strut$};
\node[circle,scale=0.8,inner sep=0pt,fill=white] (A2) at (0.5,0.5) {$\strut$};
\node[circle,scale=0.8,inner sep=0pt,fill=white] (A1) at (1.5,0.5) {$\strut$};
\node[circle,scale=0.8,inner sep=0pt,fill=white] (A3) at (2.5,1.5) {$\strut$};
\node[circle,scale=0.8,inner sep=0pt,fill=white] (A4) at (1.5,1.5) {$\strut$};
\node[circle,scale=0.8,inner sep=0pt,fill=white] (A6) at (2.5,2.5) {$\strut$};
\node[circle,scale=0.8,inner sep=0pt,fill=white] (A8) at (0.5,2.5) {$\strut$};

\begin{scope}[opacity=\leop]
\node[draw,circle,scale=0.8,inner sep=0pt,fill=white] (A2) at (0.5,0.5) {$t_2$};
\node[draw,circle,scale=0.8,inner sep=0pt,fill=white] (A1) at (1.5,0.5) {$t_1$};
\node[draw,circle,scale=0.8,inner sep=0pt,fill=white] (A3) at (2.5,1.5) {$t_3$};
\node[draw,circle,scale=0.8,inner sep=0pt,fill=white] (A4) at (1.5,1.5) {$t_4$};
\node[draw,circle,scale=0.8,inner sep=0pt,fill=white] (A6) at (2.5,2.5) {$t_6$};
\node[draw,circle,scale=0.8,inner sep=0pt,fill=white] (A8) at (0.5,2.5) {$t_8$};
\end{scope}

\node[scale=\Fscl,blue,anchor=north west,inner sep=\Fsep] (F1) at (A1.south east) {$F_1$};
\node[scale=\Fscl,blue,anchor=north west,inner sep=\Fsep] (F2) at (A2.south east) {$F_2$};
\node[scale=\Fscl,blue,anchor=north west,inner sep=\Fsep] (F3) at (A3.south east) {$F_3$};
\node[scale=\Fscl,blue,anchor=north west,inner sep=\Fsep] (F4) at (A4.south east) {$F_4$};
\node[scale=\Fscl,blue,anchor=north west,inner sep=\Fsep] (F6) at (A6.south east) {$F_6$};
\node[scale=\Fscl,blue,anchor=north west,inner sep=\Fsep] (F8) at (A8.south east) {$F_8$};
\def\qlw{1pt}
\def\qop{1}
\draw[->,blue,opacity=\qop,line width=\qlw] (F2)--($(F2)+(0,\arrlen)$);
\draw[->,blue,opacity=\qop,line width=\qlw] (F1)--($(F1)+(0,\arrlen)$);
\draw[->,blue,opacity=\qop,line width=\qlw] (F1)--($(F1)+(-\arrlen,0)$);
\draw[->,blue,opacity=\qop,line width=\qlw] ($(F1)+(-\arrlen,\arrlen)$)--(F1);
\draw[->,blue,opacity=\qop,line width=\qlw] (F4)--($(F4)+(-\arrlen,\arrlen)$);
\draw[->,blue,opacity=\qop,line width=\qlw] (F3)--($(F3)+(0,\arrlen)$);
\draw[->,blue,opacity=\qop,line width=\qlw] (F3)--($(F3)+(-\arrlen,0)$);
\draw[->,blue,opacity=\qop,line width=\qlw] ($(F3)+(-\arrlen,\arrlen)$)--(F3);
\draw[->,blue,opacity=\qop,line width=\qlw] (F6)--($(F6)+(-\arrlen,0)$);
\end{tikzpicture}
};

\node(X) at (11,0){
\begin{tikzpicture}[scale=\scl]
\def\nodescl{0.7}

\node[circle,scale=0.8,inner sep=0pt,fill=white] (A2) at (0.5,0.5) {$\strut$};
\node[circle,scale=0.8,inner sep=0pt,fill=white] (A2) at (0.5,0.5) {$\strut$};
\node[circle,scale=0.8,inner sep=0pt,fill=white] (A1) at (1.5,0.5) {$\strut$};
\node[circle,scale=0.8,inner sep=0pt,fill=white] (A3) at (2.5,1.5) {$\strut$};
\node[circle,scale=0.8,inner sep=0pt,fill=white] (A4) at (1.5,1.5) {$\strut$};
\node[circle,scale=0.8,inner sep=0pt,fill=white] (A6) at (2.5,2.5) {$\strut$};
\node[circle,scale=0.8,inner sep=0pt,fill=white] (A8) at (0.5,2.5) {$\strut$};

\node[scale=\Fscl,blue,anchor=north west,inner sep=\Fsep] (F1) at (A1.south east) {$F_1$};
\node[scale=\Fscl,blue,anchor=north west,inner sep=\Fsep] (F2) at (A2.south east) {$F_2$};
\node[scale=\Fscl,blue,anchor=north west,inner sep=\Fsep] (F3) at (A3.south east) {$F_3$};
\node[scale=\Fscl,blue,anchor=north west,inner sep=\Fsep] (F4) at (A4.south east) {$F_4$};
\node[scale=\Fscl,blue,anchor=north west,inner sep=\Fsep] (F6) at (A6.south east) {$F_6$};
\node[scale=\Fscl,blue,anchor=north west,inner sep=\Fsep] (F8) at (A8.south east) {$F_8$};
\def\qlw{1pt}
\def\qop{1}
\draw[->,blue,opacity=\qop,line width=\qlw] (F2)--(F8);
\draw[->,blue,opacity=\qop,line width=\qlw,dashed] (F1)--(F4);
\draw[->,blue,opacity=\qop,line width=\qlw,dashed] (F1)--(F2);
\draw[->,blue,opacity=\qop,line width=\qlw] (F8)--(F1);
\draw[->,blue,opacity=\qop,line width=\qlw] (F4)--(F8);
\draw[->,blue,opacity=\qop,line width=\qlw,dashed] (F3)--(F4);
\draw[->,blue,opacity=\qop,line width=\qlw,dashed] (F3)--(F6);
\draw[->,blue,opacity=\qop,line width=\qlw] (F8)--(F3);
\draw[->,blue,opacity=\qop,line width=\qlw] (F6)--(F8);
\end{tikzpicture}
};

\end{tikzpicture}
}
  \caption{\label{fig:Le_quiver} Constructing a quiver $\QD$ from a Le-diagram $D$. See \cref{ex:Le_big}.}
\end{figure}
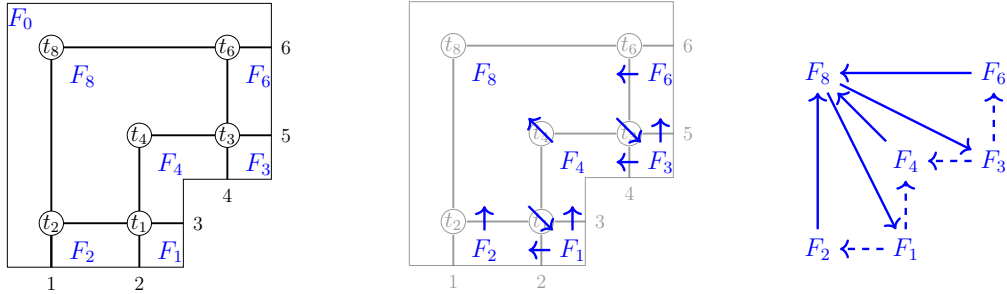
\end{example}

\subsection{Cluster algebra}\label{sec:cluster-algebra}
Let $Q$ be an ice quiver with vertex set $V$ partitioned as $V=V_f\sqcup V_m$, where $V_f$ (resp., $V_m$) denotes the set of frozen (resp., mutable) vertices. For each vertex $r\in  V$, introduce a formal variable $x_r$, and let $\xbf:=\{x_r\}_{r\in V}$. The \emph{cluster algebra} $\Acal(Q)$ associated to $Q$ is a certain $\C$-subalgebra of the ring $\C(\xbf)$ of rational functions in the variables $\xbf$. Explicitly, $\Acal(Q)$ is the subalgebra generated (as a ring) by all \emph{cluster variables} together with $\{x_r^{-1}\mid r\in V_f\}$, where the set of (in general infinitely many) cluster variables is constructed from the data $(Q,\xbf)$ using combinatorial operations called \emph{mutations}. Given a mutable vertex $r\in V_m$, a mutation at $r$ changes the quiver $Q$ in a certain way, and also replaces $x_r$ with
\begin{equation}\label{eq:x_mut}
x'_r:=\frac{\prod_{i\to r} x_{i}+\prod_{r\to j}x_{j}}{x_r},
\end{equation}
where the products are taken over arrows in $Q$ incident to $r$. We refer the reader to~\cite{FZ} for further background on cluster algebras.

\def\muu{\mu}

\section{Leclerc's cluster algebra}\label{sec:Leclerc_cluster_algebra}
For any $v\leq w$, Leclerc~\cite{Lec} introduced another cluster algebra using representations of preprojective algebras. In this section, we recast his construction in elementary terms when $(v,w)\in Q^J$ and show that in this case, his cluster algebra coincides with the one from \cref{sec:Le_cluster}.  The calculations in this section are very similar to those in~\cite[Section~5]{SSBW}, to which we refer the reader for an accessible introduction to preprojective algebra representations in type $A$.

\def\DQ{A_{n-1}}
\def\phir{\vec{\psi}}
\newcommand{\cev}[1]{\reflectbox{\ensuremath{\vec{\reflectbox{\ensuremath{#1}}}}}}
\def\phil{\cev{\psi}}

\def\cont{\operatorname{c}}
\subsection{Preprojective algebra representations from Young diagrams}\label{sec:quiver_rep}
Let $\DQ$ be the quiver with vertex set $[n-1]$ and a pair of opposite arrows between $i$ and $i+1$ for all $i\in[n-2]$. Recall that a \emph{representation} of $\DQ$ is a collection $E_1,\dots,E_{n-1}$ of vector spaces over $\C$ together with linear maps $\phir_{i}:E_i\to E_{i+1}$, $\phil_{i+1}:E_{i+1}\to E_i$ for all $i\in[n-2]$. A Young diagram $\la$ that fits inside a $k\times(n-k)$ rectangle gives rise to a representation of $\DQ$: for each box $(i,j)$ of $\la$, let $\cont(i,j):=k+j-i$ (thus $(i,j)$ is labeled by $s_{\cont(i,j)}$ in \cref{fig:Le}). Then for all $c\in[n-1]$, $E_{c}$ has a basis $\{e_{i,j}\mid (i,j)\in \la: \cont(i,j)=c\}$. Additionally, for each box $(i,j)$, the values of the maps $\phir_{\cont(i,j)}$ and $\phil_{\cont(i,j)}$ on $e_{i,j}$ are given by
\[\phir_{\cont(i,j)}(e_{i,j})=
  \begin{cases}
    e_{i,j+1}, &\text{if $(i,j+1)\in \la$,}\\
    0,&\text{otherwise;}
  \end{cases}\quad \phil_{\cont(i,j)}(e_{i,j})=
  \begin{cases}
    e_{i+1,j}, &\text{if $(i+1,j)\in \la$,}\\
    0,&\text{otherwise.}
  \end{cases} \]

Leclerc works not just with representations of $\DQ$, but with representations of the associated \emph{preprojective algebra} $\Lambda$. It is easy to see that each Young diagram $\la$ contained inside a $k\times (n-k)$ rectangle yields a representation $U_\la$ of $\Lambda$.

\def\lar#1{\nu_{\parr{#1}}}

\subsection{Leclerc's representations}\label{sec:Lec_rep}
Recall that we have fixed $(v,w)\in Q^J$. Leclerc associates a representation $U_r$ of $\Lambda$ to each $r\in \Jo$. Our goal is to define a family $\{\lar r\}_{r\in\Jo}$ of Young diagrams such that each of them fits inside a $k\times (n-k)$ rectangle, and such that $U_r=U_{\lar r}$ for all $r\in\Jo$.

\def\sv_#1{s^{\bv}_{#1}}
 For $r\in[m]$, we set 
\[\wi r:=s_{i_1}\cdots s_{i_r},\qquad\vi r:=\sv_{i_1}\cdots\sv_{i_r},\quad\text{where}\quad\sv_{i_r}:=
\begin{cases}
  s_{i_r}, &\text{if $r\notin\Jo$,}\\
  1,&\text{if $r\in \Jo$;}
\end{cases}\]
\[
\Wi r:=w^{-1}\cdot \wi r=s_{i_m}\cdots s_{i_{r+1}},\qquad \text{and}\qquad \Vi r:=v^{-1}\cdot \vi r=\sv_{i_m}\cdots\sv_{i_{r+1}}.
\]

For $a\in[n-1]$, let $\omega_{a}:= \{1,2,\ldots,a\}$. For $u \in W$ and $a \in [n-1]$, the subset $u\omega_a$ can be identified with a Young diagram $\muu(u,a)$ fitting inside an $(n-a) \times a$ rectangle, such that if one places $s_{a+i-j}$ inside each box $(i,j)\in\muu(u,a)$ and takes the product as in \cref{sec:QJtoLe}, the resulting element $\bar u$ satisfies $\bar u\omega_a=u\omega_a$. That is, $\bar u$ is the unique element of $W^{J_a}$ satisfying $\bar u\omega_a=u\omega_a$, where $J_a=[n-1]\setminus\{a\}$, and $\muu(u,a)$ is the Young diagram associated to $\bar u$. Clearly, if $u' \leq u$, then $\muu(u',a) \subseteq \muu(u,a)$.

Since $w\in W^J$, we see that $(\Wi{r-1})^{-1} \in W^J$ so $\muu(\Wi{r-1},i_r)$ is a rectangle for any $r$ whose top left (resp., bottom right) box is labeled by $s_a$ (resp., by $s_k$). Thus the $180^\circ$ rotation of the skew shape $\muu(\Wi{r-1},i_r)/\muu(\Vi{r-1},i_r)$ is a Young diagram which we denote $\lar r$.  We emphasize that $\lar r$ is defined for all $r\in[m]$. 

\begin{example}\label{ex:lar}
Let $k=6$, $n=12$. Consider a Le-diagram in \cref{fig:muu}(left). We have $\Jo=\{a,b,c,d\}$, and the diagrams $\lar r$ for $r\in\Jo$ are shown in \cref{fig:muu}(right). For instance, $i_b=7$, $\Wi{b-1}\omega_{i_b}=s_6s_7s_8s_9s_5s_6s_7s_8s_4s_5s_6s_7\omega_7$, and thus $\muu(\Wi{b-1},i_b)$ is a $3\times 4$ rectangle. Similarly, $\Vi{b-1}=s_7s_8s_9s_5s_6s_7s_8s_4s_6\omega_7=s_7s_8s_5s_6s_7\omega_7$. The shape $\muu(\Vi{b-1},i_b)=(3,2)$, rotated by $180^\circ$, consists of red and yellow squares (see \cref{fig:muu}) labeled by $s_7,s_8,s_5,s_6,s_7$, and is the complement of $\lar b$ inside a $3\times 4$ rectangle. Thus $\lar b=(4,2,1)$.
\end{example}

The next result follows from the definitions; see~\cite[Proposition~4.3]{Lec} and~\cite[Section~5]{SSBW}.
\begin{proposition}
For $r\in\Jo$, Leclerc's representation $U_r$ of $\Lambda$ coincides with the representation $U_{\lar r}$ constructed from $\lar r$ in \cref{sec:quiver_rep}.\qed
\end{proposition}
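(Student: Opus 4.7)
The plan is to compare the two constructions basis-by-basis. By \cite[Proposition~4.3]{Lec} together with the type-$A$ unpacking in \cite[Section~5]{SSBW}, Leclerc's representation $U_r$ admits an explicit combinatorial description: its underlying graded vector space has a basis labeled by the boxes of the skew shape $\muu(\Wi{r-1},i_r)/\muu(\Vi{r-1},i_r)$, graded by vertex content, and its maps $\phir, \phil$ send each box basis vector to a specified east/south neighbor (or to $0$ if no such neighbor exists inside the skew shape).

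Next I would invoke the definition of $\lar r$ as the $180^\circ$-rotation of this skew shape inside the ambient rectangle $\muu(\Wi{r-1},i_r)$. The rotation supplies a bijection between the boxes of $\muu(\Wi{r-1},i_r)/\muu(\Vi{r-1},i_r)$ and the boxes of $\lar r$. Since the content of $(i,j)$ in $\muu(u,a)$ is $a+i-j$ while the content of $(i,j)$ in $\lar r\subseteq k\times(n-k)$ is $k+j-i$, rotation inside the ambient rectangle sends a box of content $c$ to a box of content $c$; hence the bijection is grading-preserving. Moreover, the rotation interchanges east/south directions with west/north directions, which together with the swap of the roles of $\phir$ and $\phil$ in the two pictures is absorbed by the conventions in \cref{sec:quiver_rep}.

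The only nontrivial point is this bookkeeping: verifying that the ``content'' and ``direction'' conventions in the $\muu$-construction and in \cref{sec:quiver_rep} were set up precisely so that the rotation intertwines the two families of maps. The preprojective algebra relations $\phil\circ\phir - \phir\circ\phil = 0$ hold automatically on both sides for modules coming from (skew) Young diagrams, so once the basis bijection and the matching of $\phir,\phil$ are checked, the isomorphism $U_r \cong U_{\lar r}$ follows. The main obstacle, if any, is purely notational; the argument is essentially unpacking of definitions, following \cite[Section~5]{SSBW} closely and invoking \cite[Proposition~4.3]{Lec} for the identification of $U_r$ at the abstract level.
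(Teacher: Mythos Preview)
Your proposal is correct and follows the same approach as the paper, which simply asserts that the result ``follows from the definitions'' with a pointer to \cite[Proposition~4.3]{Lec} and \cite[Section~5]{SSBW} and gives no further argument. Your sketch is an explicit unpacking of precisely those definitions: the basis bijection via $180^\circ$ rotation, content preservation, and the bookkeeping of $\phir,\phil$ directions---which, as you note, is the only place requiring care.
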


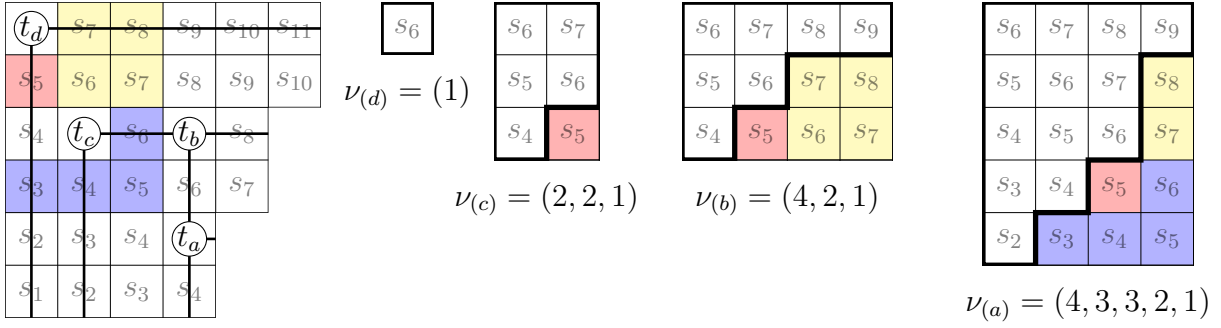
\begin{figure}
\def\scl{0.7}
\begin{tikzpicture}
\def\maxx{10}
\def\lineop{1}
\def\gridlw{0.2pt}
\def\tscl{1}
\def\lelw{1pt}
\def\col{red!30}
\def\coll{yellow!30}
\def\colll{blue!30}
\def\ydlw{2pt}

\def\sscl{1}
\def\sop{0.5}

\node[anchor=north west] (A) at (0,0){
\begin{tikzpicture}[xscale=\scl,yscale=-\scl,anchor=center]
\begin{scope}
    \clip (0,0)--(6,0)--(6,2)--(5,2)--(5,4)--(4,4)--(4,6)--(0,6)--cycle;
\fill[\col] (0,2) rectangle (1,1);
\fill[\coll] (1,2) rectangle (3,0);
\fill[\colll] (0,4) rectangle (3,3);
\fill[\colll] (2,3) rectangle (3,2);

\foreach[count=\ii] \i in {0,...,\maxx}{
\draw[line width=\gridlw] (0,\i) -- (\maxx,\i);
\draw[line width=\gridlw] (\i,0) -- (\i,\maxx);
}
\draw[line width=\gridlw] (0,0)--(6,0)--(6,2)--(5,2)--(5,4)--(4,4)--(4,6)--(0,6)--cycle;

\draw[line width=\lelw] (0.5,0.5)--(0.5,6);
\draw[line width=\lelw] (0.5,0.5)--(6,0.5);
\draw[line width=\lelw] (1.5,2.5)--(1.5,6);
\draw[line width=\lelw] (1.5,2.5)--(6,2.5);
\draw[line width=\lelw] (3.5,2.5)--(3.5,6);
\draw[line width=\lelw] (3.5,4.5)--(6,4.5);
\node[draw,circle,scale=\tscl,inner sep=0pt,fill=white] (d) at (0.5,0.5) {$t_d$};
\node[draw,circle,scale=\tscl,inner sep=0pt,fill=white] (c) at (1.5,2.5) {$t_c$};
\node[draw,circle,scale=\tscl,inner sep=0pt,fill=white] (b) at (3.5,2.5) {$t_b$};
\node[draw,circle,scale=\tscl,inner sep=0pt,fill=white] (a) at (3.5,4.5) {$t_a$};

\node[scale=\sscl,opacity=\sop] (A) at (1.5,0.5) {$s_7$};
\node[scale=\sscl,opacity=\sop] (A) at (0.5,1.5) {$s_5$};
\node[scale=\sscl,opacity=\sop] (A) at (1.5,1.5) {$s_6$};
\node[scale=\sscl,opacity=\sop] (A) at (0.5,2.5) {$s_4$};

\node[scale=\sscl,opacity=\sop] (A) at (2.5,0.5) {$s_8$};
\node[scale=\sscl,opacity=\sop] (A) at (3.5,0.5) {$s_9$};
\node[scale=\sscl,opacity=\sop] (A) at (2.5,1.5) {$s_7$};
\node[scale=\sscl,opacity=\sop] (A) at (3.5,1.5) {$s_8$};
\node[scale=\sscl,opacity=\sop] (A) at (2.5,2.5) {$s_6$};

\node[scale=\sscl,opacity=\sop] (A) at (0.5,3.5) {$s_3$};
\node[scale=\sscl,opacity=\sop] (A) at (1.5,3.5) {$s_4$};
\node[scale=\sscl,opacity=\sop] (A) at (2.5,3.5) {$s_5$};
\node[scale=\sscl,opacity=\sop] (A) at (3.5,3.5) {$s_6$};
\node[scale=\sscl,opacity=\sop] (A) at (0.5,4.5) {$s_2$};
\node[scale=\sscl,opacity=\sop] (A) at (1.5,4.5) {$s_3$};
\node[scale=\sscl,opacity=\sop] (A) at (2.5,4.5) {$s_4$};
\node[scale=\sscl,opacity=\sop] (A) at (0.5,5.5) {$s_1$};
\node[scale=\sscl,opacity=\sop] (A) at (1.5,5.5) {$s_2$};
\node[scale=\sscl,opacity=\sop] (A) at (2.5,5.5) {$s_3$};
\node[scale=\sscl,opacity=\sop] (A) at (3.5,5.5) {$s_4$};

\node[scale=\sscl,opacity=\sop] (A) at (4.5,0.5) {$s_{10}$};
\node[scale=\sscl,opacity=\sop] (A) at (4.5,1.5) {$s_{9}$};
\node[scale=\sscl,opacity=\sop] (A) at (4.5,2.5) {$s_{8}$};
\node[scale=\sscl,opacity=\sop] (A) at (4.5,3.5) {$s_{7}$};

\node[scale=\sscl,opacity=\sop] (A) at (5.5,0.5) {$s_{11}$};
\node[scale=\sscl,opacity=\sop] (A) at (5.5,1.5) {$s_{10}$};

\end{scope}
\end{tikzpicture}
};

\node[anchor=north west] (dd) at (5,0){
\begin{tikzpicture}[scale=\scl,anchor=center]
\begin{scope}
    \clip (0,0)--(1,0)--(1,1)--(0,1)--cycle;
\foreach[count=\ii] \i in {0,...,\maxx}{
\draw[line width=\gridlw] (0,\i) -- (\maxx,\i);
\draw[line width=\gridlw] (\i,0) -- (\i,\maxx);
}
\draw[line width=\ydlw] (0,0)--(1,0)--(1,1)--(0,1)--cycle;
\node[scale=\sscl,opacity=\sop] (A) at (0.5,0.5) {$s_6$};
\end{scope}
\end{tikzpicture}
};
\node[anchor=north] (ddd) at (dd.south){$\lar{d}=(1)$};

\node[anchor=north west] (cc) at (6.5,0){
\begin{tikzpicture}[xscale=\scl,yscale=-\scl,anchor=center]
\begin{scope}
    \clip (0,0)--(2,0)--(2,3)--(0,3)--cycle;

\fill[\col] (1,3) rectangle (2,2);
\foreach[count=\ii] \i in {0,...,\maxx}{
\draw[line width=\gridlw] (0,\i) -- (\maxx,\i);
\draw[line width=\gridlw] (\i,0) -- (\i,\maxx);
}
\draw[line width=\gridlw](0,0)--(2,0)--(2,3)--(0,3)--cycle;
\draw[line width=\ydlw](0,0)--(2,0)--(2,2)--(1,2)--(1,3)--(0,3)--cycle;

\node[scale=\sscl,opacity=\sop] (A) at (0.5,0.5) {$s_6$};
\node[scale=\sscl,opacity=\sop] (A) at (1.5,0.5) {$s_7$};
\node[scale=\sscl,opacity=\sop] (A) at (0.5,1.5) {$s_5$};
\node[scale=\sscl,opacity=\sop] (A) at (1.5,1.5) {$s_6$};
\node[scale=\sscl,opacity=\sop] (A) at (0.5,2.5) {$s_4$};
\node[scale=\sscl,opacity=\sop] (A) at (1.5,2.5) {$s_5$};
\end{scope}
\end{tikzpicture}
};
\node[anchor=north] (ccc) at (cc.south){$\lar{c}=(2,2,1)$};

\node[anchor=north west] (bb) at (9,0){
\begin{tikzpicture}[xscale=\scl,yscale=-\scl,anchor=center]
\begin{scope}
    \clip (0,0) rectangle (4,3);

\fill[\col] (1,3) rectangle (2,2);
\fill[\coll] (2,3) rectangle (4,1);

\foreach[count=\ii] \i in {0,...,\maxx}{
\draw[line width=\gridlw] (0,\i) -- (\maxx,\i);
\draw[line width=\gridlw] (\i,0) -- (\i,\maxx);
}
\draw[line width=\gridlw](0,0) rectangle (4,3);
\draw[line width=\ydlw](0,0)--(4,0)--(4,1)--(2,1)--(2,2)--(1,2)--(1,3)--(0,3)--cycle;

\node[scale=\sscl,opacity=\sop] (A) at (0.5,0.5) {$s_6$};
\node[scale=\sscl,opacity=\sop] (A) at (1.5,0.5) {$s_7$};
\node[scale=\sscl,opacity=\sop] (A) at (0.5,1.5) {$s_5$};
\node[scale=\sscl,opacity=\sop] (A) at (1.5,1.5) {$s_6$};
\node[scale=\sscl,opacity=\sop] (A) at (0.5,2.5) {$s_4$};
\node[scale=\sscl,opacity=\sop] (A) at (1.5,2.5) {$s_5$};

\node[scale=\sscl,opacity=\sop] (A) at (2.5,0.5) {$s_8$};
\node[scale=\sscl,opacity=\sop] (A) at (3.5,0.5) {$s_9$};
\node[scale=\sscl,opacity=\sop] (A) at (2.5,1.5) {$s_7$};
\node[scale=\sscl,opacity=\sop] (A) at (3.5,1.5) {$s_8$};
\node[scale=\sscl,opacity=\sop] (A) at (2.5,2.5) {$s_6$};
\node[scale=\sscl,opacity=\sop] (A) at (3.5,2.5) {$s_7$};
\end{scope}
\end{tikzpicture}
};
\node[anchor=north] (bbb) at (bb.south){$\lar{b}=(4,2,1)$};

\node[anchor=north west] (aa) at (13,0){
\begin{tikzpicture}[xscale=\scl,yscale=-\scl,anchor=center]
\begin{scope}
    \clip (0,0) rectangle (4,5);

\fill[\col] (2,4) rectangle (3,3);
\fill[\coll] (3,3) rectangle (4,1);
\fill[\colll] (1,5) rectangle (4,4);
\fill[\colll] (3,4) rectangle (4,3);

\foreach[count=\ii] \i in {0,...,\maxx}{
\draw[line width=\gridlw] (0,\i) -- (\maxx,\i);
\draw[line width=\gridlw] (\i,0) -- (\i,\maxx);
}
\draw[line width=\gridlw] (0,0) rectangle (4,5);
\draw[line width=\ydlw](0,0)--(4,0)--(4,1)--(3,1)--(3,3)--(2,3)--(2,4)--(1,4)--(1,5)--(0,5)--cycle;

\node[scale=\sscl,opacity=\sop] (A) at (0.5,0.5) {$s_6$};
\node[scale=\sscl,opacity=\sop] (A) at (1.5,0.5) {$s_7$};
\node[scale=\sscl,opacity=\sop] (A) at (0.5,1.5) {$s_5$};
\node[scale=\sscl,opacity=\sop] (A) at (1.5,1.5) {$s_6$};
\node[scale=\sscl,opacity=\sop] (A) at (0.5,2.5) {$s_4$};
\node[scale=\sscl,opacity=\sop] (A) at (1.5,2.5) {$s_5$};

\node[scale=\sscl,opacity=\sop] (A) at (2.5,0.5) {$s_8$};
\node[scale=\sscl,opacity=\sop] (A) at (3.5,0.5) {$s_9$};
\node[scale=\sscl,opacity=\sop] (A) at (2.5,1.5) {$s_7$};
\node[scale=\sscl,opacity=\sop] (A) at (3.5,1.5) {$s_8$};
\node[scale=\sscl,opacity=\sop] (A) at (2.5,2.5) {$s_6$};
\node[scale=\sscl,opacity=\sop] (A) at (3.5,2.5) {$s_7$};

\node[scale=\sscl,opacity=\sop] (A) at (0.5,3.5) {$s_3$};
\node[scale=\sscl,opacity=\sop] (A) at (1.5,3.5) {$s_4$};
\node[scale=\sscl,opacity=\sop] (A) at (2.5,3.5) {$s_5$};
\node[scale=\sscl,opacity=\sop] (A) at (3.5,3.5) {$s_6$};
\node[scale=\sscl,opacity=\sop] (A) at (0.5,4.5) {$s_2$};
\node[scale=\sscl,opacity=\sop] (A) at (1.5,4.5) {$s_3$};
\node[scale=\sscl,opacity=\sop] (A) at (2.5,4.5) {$s_4$};
\node[scale=\sscl,opacity=\sop] (A) at (3.5,4.5) {$s_5$};

\end{scope}
\end{tikzpicture}
};
\node[anchor=north] (aaa) at (aa.south){$\lar{a}=(4,3,3,2,1)$};

\end{tikzpicture}
  \caption{\label{fig:muu}Constructing Young diagrams $\lar r$ from a Le-diagram (cf. \cref{ex:lar}).}
\end{figure}

\def\ir{i^{\parr r}}
\def\jr{j^{\parr r}}
\def\Hook{\operatorname{H}}
We give an alternative description of the Young diagram $\lar r$ using the combinatorics of Le-diagrams. 

For integers $a,b\geq1$, denote by $\Hook(a,b)=(a,1^{b-1})$ the hook Young diagram whose first row contains $a$ boxes and whose first column contains $b$ boxes. We consider ``Frobenius coordinates'' for Young diagrams: we write 
$\mu=[(a_1,b_1),\dots,(a_d,b_d)]$ if   $\mu=\{(l+i-1,l+j-1)\mid l\in[d], i\in[a_l], j\in [b_l]\}$. 
Thus the first row of $\mu$ has $a_1$ boxes, the first column of $\mu$ has $b_1$ boxes, etc.  

For each box $(i,j)\in \la$, let $\NW(i,j)\in\la\sqcup\{(0,0)\}$ be the box closest to $(i,j)$ in the strictly northwest direction that is either $(0,0)$ or contains a dot. Observe that $\NW(i,j)$ is well defined because of the Le-diagram condition in \cref{sec:QJtoLe}. 

Recall from \cref{sec:QJtoLe} that the boxes of $\la$ correspond to the terms in the reduced word $w=s_{i_1}\cdots s_{i_m}$ for $w$, i.e., to the elements of $[m]$. For $r\in[m]$, we denote by $(\ir,\jr)\in \la$ the corresponding box of $\la$ (thus $i_r=k+\jr-\ir$). 

\def\irW{i^{\parr{\rW}}}
\def\jrW{j^{\parr{\rW}}}
\def\irN{i^{\parr{\rN}}}
\def\jrN{j^{\parr{\rN}}}
\def\irNW{i^{\parr{\rNW}}}
\def\jrNW{j^{\parr{\rNW}}}

\def\irp{i^{\parr{r'}}}
\def\jrp{j^{\parr{r'}}}

\begin{proposition}\label{prop:Lecdiagram}
For $r\in \Jo$, the Frobenius coordinates of $\lar r$ are given by 
\[
  \lar r = \left[(\ir,\jr),\NW(\ir,\jr),\ldots,\NW^p(\ir,\jr)\right],\]
where $p\geq0$ is such that $\NW^{p+1}(\ir,\jr) = (0,0)$.
\end{proposition}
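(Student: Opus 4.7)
The plan is to prove the proposition by computing $\muu(\Wi{r-1}, i_r)$ and $\muu(\Vi{r-1}, i_r)$ and analyzing their skew difference, whose $180^\circ$ rotation equals $\lar r$ by definition.

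First, I would show that $\muu(\Wi{r-1}, i_r)$ is the rectangle with $\ir$ rows and $\jr$ columns, sitting at the top-left corner of its bounding rectangle. The word $\Wi{r-1} = s_{i_m} \cdots s_{i_r}$ corresponds, under the Le-diagram reading order (right-to-left, bottom-to-top), to the boxes of $\la$ at positions $\geq r$, which are precisely the boxes weakly NW of $(\ir, \jr)$; applied to $\omega_{i_r}$ with $s_{i_r}$ acting first, the sequence of simple transpositions produces the subset $\Wi{r-1}\omega_{i_r}$ encoding this rectangle under the subset-to-partition bijection.

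Second, I would compute $\muu(\Vi{r-1}, i_r)$ by induction on $p$, along the NW-chain $p_0 = (\ir, \jr),\ p_1 = \NW(\ir, \jr),\ \ldots,\ p_{p+1} = (0, 0)$. The Le-property ensures each $p_l$ for $l \leq p$ is a genuine dot, since the componentwise-maximum of any set of dots strictly NW of a given box is itself a dot (forced by the Le corner condition, as otherwise two incomparable dots would violate Le). A central observation is that, along the sequence of simple transpositions forming $\Vi{r-1}$, the skipped letters at dots \emph{off} the NW-chain act as no-ops on the intermediate subset — either both or neither of $\{i_s, i_s + 1\}$ already lie in the subset at that stage — so only the skips at the chain dots $p_0, \ldots, p_p$ alter the outcome. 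In the base case $p = 0$, the single nontrivial skip at $p_0$ yields $\muu(\Vi{r-1}, i_r)$ as the $180^\circ$-rotated complement of $\Hook(\jr, \ir)$ inside the $\ir \times \jr$ rectangle, matching Frobenius $[(\ir, \jr)]$. In the inductive step, the skip at $p_0$ contributes one outer Frobenius hook of size $(\ir, \jr)$, and the inductive hypothesis applied at $p_1$ provides the remaining nested inner hooks indexed by $l \geq 1$.

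The main obstacle is establishing the chain-skip-only principle: showing that off-chain skipped dots do not disturb the intermediate subset. This requires a careful use of the Le-property together with the defining characterization of the positive distinguished subexpression, since it is precisely the latter that forces skipped letters to act as no-ops whenever the current subset is not in a ``swap-ready'' state. Once this principle is in hand, tracking the change in the SE-boundary path caused by each on-chain skip shows that it cleanly removes one antidiagonal strip corresponding to a single Frobenius hook, yielding the claimed expression for $\lar r$.
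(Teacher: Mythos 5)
Your plan lands on the right picture (an $\ir\times\jr$ rectangle with nested hooks removed along the NW-chain), but it leaves precisely the mathematical content of the statement unproven. The ``chain-skip-only principle'' --- that every skipped letter at a dot off the NW-chain fixes the intermediate subset, so that only the on-chain skips matter --- is not a routine observation: it is a global assertion about the entire suffix $\sv_{i_m}\cdots\sv_{i_r}$ acting on $\omega_{i_r}$, and you explicitly defer it (``the main obstacle\dots requires a careful use of the Le-property together with the defining characterization of the positive distinguished subexpression''). In effect the proposal restates what must be shown and names the hard step without carrying it out. A second gap sits in the induction itself: you invoke ``the inductive hypothesis applied at $p_1$,'' but $p_1=\NW(\ir,\jr)$ need not lie on the same antidiagonal as $(\ir,\jr)$, so the statement for the dot $r_1$ at $p_1$ concerns $\Vi{r_1-1}\omega_{i_{r_1}}$ --- a subset of a \emph{different} size $i_{r_1}\neq i_r$, built from a later suffix of the word --- while your running computation at stage $r_1$ carries a size-$i_r$ subset that is not $\omega_{i_{r_1}}$. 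No argument is given for splicing the $p_1$-statement into the computation for $r$. Finally, a smaller point: your justification of the rectangle claim via ``the boxes at positions $\geq r$ are precisely the boxes weakly NW of $(\ir,\jr)$'' is false as stated (positions $\geq r$ include all boxes in rows above $\ir$, also to the right of column $\jr$); the conclusion that $\muu(\Wi{r-1},i_r)$ is the $\ir\times\jr$ rectangle is correct, but the right reason is that the remaining letters act trivially on the relevant coset (equivalently, that $(\Wi{r-1})^{-1}\in W^J$ forces a rectangular shape).

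For contrast, the paper avoids any global principle by strengthening the statement to all $r\in[m]$ (for $r\notin\Jo$ one simply drops the first hook) and inducting one step along the diagonal, from $(\ir,\jr)$ to $(\ir-1,\jr-1)$. The payoff is that the diagonal neighbour has the \emph{same} content $i_{r'}=i_r$, so the two products $\Vi{r-1}$ and $\Vi{r'-1}$ differ only by letters whose contents are $\neq i_r$ (which fix $\omega_{i_r}$) together with the single letter $s_{i_r}$ at position $r$, applied or skipped according to whether $r\in\Jo$; this makes the dichotomy ``$\muu(\Vi{r-1},i_r)=\muu(\Vi{r'-1},i_{r'})$ versus differs by the boundary hook $\Hook(\ir,\jr)$'' a one-row check, and the hook description of $\lar r$ falls out by complementation. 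If you want to salvage your route, either prove the no-op principle by an induction on the word (at which point you will essentially have reproduced the diagonal induction), or switch to the adjacent-box induction with the strengthened statement; your observation that the componentwise maximum of dots strictly NW of a box is again a dot (so $\NW$ is the unique closest dot) is correct and worth keeping in either version.
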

\begin{proof}
For a box $(i,j)\in \la$, let $\WNW(i,j)$ be the box closest to $(i,j)$ in the \emph{weakly} northwest direction that is either $(0,0)$ or contains a dot. Thus $(i,j)$ contains a dot if and only if $\WNW(i,j)=(i,j)$. Recall that $\lar r$ is defined for all $r\in[m]$, not just for $r\in\Jo$. We will show more generally that for $r\in[m]$, we have 
\begin{equation*}%
  \lar r = \left[\WNW(\ir,\jr),\NW(\WNW(\ir,\jr)),\ldots,\NW^p(\WNW(\ir,\jr))\right],
\end{equation*}
where $p\geq0$ satisfies $\NW^{p+1}(\WNW(\ir,\jr)) = (0,0)$. If $(\ir,\jr)=(1,1)$ then the result is clear. Otherwise, assume that we have shown the result for all boxes weakly northwest of $(\ir,\jr)$ other than $(\ir,\jr)$. 

Suppose first that $r\notin\Jo$, thus $(\ir,\jr)$ does not contain a dot. Let $\rW\in[m]$ and $\rN\in[m]$ be such that $(\irW,\jrW)=(\ir,\jr-1)$ and $(\irN,\jrN)=(\ir-1,\jr)$. By the Le-diagram condition, either all boxes above or all boxes to the left of $(\ir,\jr)$ do not contain a dot. In the former case, we have $\lar r= \lar {\rW}$ and $\WNW(\ir,\jr)=\WNW(\irW,\jrW)$, and in the latter case, we have $\lar r= \lar {\rN}$ and $\WNW(\ir,\jr)=\WNW(\irN,\jrN)$. The result follows by induction.

Suppose now that $r\in\Jo$ and let $\rNW\in[m]$ be such that $(\irNW,\jrNW)=(\ir-1,\jr-1)$. If $r\notin\Jo$, then clearly $\muu(\Vi{r-1},i_r)$ is a disjoint union of $\Hook(\ir,\jr)$ and $\muu(\Vi{\rNW-1},i_{\rNW})$, so $\lar r=\lar{\rNW}$. If $r\in\Jo$, then $\muu(\Vi{r-1},i_r)=\muu(\Vi{\rNW-1},i_{\rNW})$, so $\lar r$ is a disjoint union of $\Hook(\ir,\jr)$ and $\lar{\rNW}$.
\end{proof}

\def\ip#1{i^{(#1)}}
\def\jp#1{j^{(#1)}}
\begin{example}
Continuing \cref{ex:lar}, the Frobenius coordinates of $\lar r$ for $r\in\Jo$ are given as follows:
\[\lar d=[(1,1)],\quad \lar c=[(2,3),(1,1)],\quad \lar b=[(4,3),(1,1)],\quad \lar a=[(4,5),(2,3),(1,1)].\]
We see that $\lar a$ is a union of $\Hook(\ip a,\jp a)=\Hook(4,5)$ and $\lar c$, which is a union of $\Hook(\ip c,\jp c)=\Hook(2,3)$ and $\lar d=\Hook(1,1)$. Similarly, $\lar b$ is a union of $\Hook(\ip b,\jp b)=\Hook(4,3)$ and $\lar d$.
\end{example}

\subsection{Leclerc's quiver}

Now that we have constructed Young diagrams $\lar r$ for $r\in\Jo$, we can analyze the quiver $\tQ$ that Leclerc associates to $(v,w)\in Q^J$. The vertex set\footnote{More precisely, the vertex set of $\tQ$ consists of irreducible factors of functions $\{f_r\}_{r\in\Jo}$ defined in~\eqref{eq:f_r}. However,  when $(v,w)\in Q^J$, each $f_r$ is irreducible by \cref{cor:irred} below, thus we may label the vertices of $\tQ$ by elements of $\Jo$.}  of $\tQ$ is just $\Jo$. The frozen vertices of $\tQ$ correspond to the Young diagrams obtained from $\muu(w^{-1},i_a)/\muu(v^{-1},i_a)$ (for $a\in[n-1]$) by a $180^\circ$ rotation. It is easy to see that these are precisely the Young diagrams $\lar r$ such that $F_r$ is a boundary face of $G(D)$ that contains the part of the boundary of $\la$ between boundary vertices $a$ and $a+1$. Thus the map $r\mapsto F_r$ sends the vertices of $\tQ$ bijectively to the vertices of the quiver $\QD$ from \cref{Le_quiver}, preserving the partition into frozen and mutable vertices.

The arrows of $\tQ$ can be described in terms of morphisms of Young diagrams.  Given a skew shape $\la/\mu$ for  $\mu\subset \la$, we say that their set-theoretic difference $\la/\mu$ is an \emph{order ideal of $\la$}. For a Young diagram $\la$ and an integer $p\geq0$, we denote $\shift^p(\la):=\{(i+p,j+p)\mid (i,j)\in\la\}$. For another Young diagram $\mu$, we write $\la\xto p \mu$ if the set $\shift^p(\la)\cap \mu$ is an order ideal of $\mu$. In this case, we say that $\la\xto p\mu$ is a \emph{morphism from $\la$ to $\mu$}.  The morphism $\lambda \xto 0 \lambda$ is considered trivial, and a morphism $\la \xto p \mu$ is the zero morphism if $\shift^p(\la)\cap \mu = \emptyset$.  Morphisms can be composed: if $\lambda \xto p \mu$ and $\mu \xto q \nu$ then $\lambda \xto{p+q} \nu$.  For $r,r'\in \Jo$, a nonzero and nontrivial morphism $\lar r\xto p\lar{r'}$ is \emph{irreducible} if it is not a composition of non-trivial morphisms $\lar r\xto{p''} \lar{r''}\xto{p'}\lar{r'}$ for some $r''\in\Jo$. Leclerc's quiver $\tQ$ contains an arrow $r \to r'$ for $r \neq r'\in\Jo$ if and only if at least one of $r,r'$ is mutable and there is an irreducible morphism $\lar r \xto p \lar{r'}$ for some $p\geq0$.

\begin{remark}
We explain the relation between our morphisms of Young diagrams and the morphisms of representations of $\Lambda$ from the original definition~\cite{Lec} of $\tQ$.
Let $U_{\la}$ and $U_{\mu}$ be the two (indecomposable) $\Lambda$-modules associated to Young diagrams $\la$ and $\mu$ as in Section~\ref{sec:quiver_rep}.    Since $U_{\la}$ is generated (as a $\Lambda$-module) by the vector $e_{1,1}$, a morphism $f:U_{\la} \to U_{\mu}$ is uniquely determined by $f(e_{1,1}) \in U_{\mu}$, which must be a linear combination of $e_{1,1}, e_{2,2}, \ldots, e_{d,d} \in U_{\mu}$, where $d$ is the length of $\mu$ in Frobenius coordinates.

Associated to each morphism $\la \xto p \mu$ of Young diagrams is the \emph{elementary morphism} $U_\la \xto p U_\mu$ of $\Lambda$-modules sending $e_{1,1}$ to $e_{p,p}$.  (The condition that $\shift^p(\la) \cap \mu$ is an order ideal of $\mu$ corresponds exactly to the condition that $e_{1,1} \mapsto e_{p,p}$ defines a morphism of $\Lambda$-modules; see~\cite[Remark~5.15]{SSBW}.)  Any morphism $f: U_\la \to U_\mu$ is thus a linear combination of the elementary morphisms $U_\la \xto p U_\mu$ of Young diagrams.

A morphism $f: U_{\lar r} \to U_{\lar{r'}}$ is \emph{irreducible} if it is nonzero, not an isomorphism, and cannot be factored nontrivially within the category ${\rm add}(U)$ whose objects are isomorphic to direct sums of the $U_{\lar s}$ for $s \in \Jo$.  Leclerc's quiver $\tQ$ has no loops, i.e., arrows from $r$ to $r$; see~\cite[Definition~3.9(d)]{Lec}.  For $r \neq r'$, the number of arrows in $\tQ$ from $r$ to $r'$ is equal to the \emph{dimension of the space of irreducible morphisms} from $U_{\lar r}$ to $U_{\lar{r'}}$.  This dimension is defined \cite{BIRS,Schiffler} to be equal to the dimension of the space of all morphisms $U_{\lar r} \to U_{\lar{r'}}$ modulo the subspace consisting of reducible morphisms. If $\la \xto p \mu$ is a morphism, then (see the proof of Proposition \ref{prop:Lecagree}) $\la \xto q \mu$ is a reducible morphism for all $q > p$.  Thus any morphism $f: U_{\lar r} \to U_{\lar{r'}}$ is equal modulo reducible morphisms to a scalar multiple of some elementary morphism $U_\la \xto p U_\mu$.  It follows that the dimension of the space of irreducible morphisms from $U_{\lar r}$ to $U_{\lar{r'}}$ is equal to 1 or 0, depending on whether there is an elementary morphism $U_{\lar r} \xto p U_{\lar{r'}}$ that is irreducible as a morphism of $\Lambda$-modules.  Furthermore, when considering the irreducibility of an elementary morphism $U_{\lar r} \xto p U_{\lar{r'}}$, we only need to check if it factors nontrivially as a product of elementary morphisms.  Thus the irreducibility of $U_{\lar r} \xto p U_{\lar{r'}}$ in the sense of $\Lambda$-modules agrees with the notion of irreducibility we defined for a morphism $\lar r \xto p \lar{r'}$ of Young diagrams.
\end{remark}

\begin{proposition}\label{prop:Lecagree}
The map $r\mapsto F_r$ gives a quiver isomorphism between Leclerc's quiver $\tQ$ and the Le-diagram quiver $\QD$.
\end{proposition}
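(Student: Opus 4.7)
The plan is to verify $r \mapsto F_r$ gives a quiver isomorphism by a combinatorial case analysis leveraging the Frobenius-coordinate description of $\lar r$ from \cref{prop:Lecdiagram}, closely following the template of~\cite[Section~5]{SSBW} for the Schubert case. Both quivers have vertex set $\Jo$ and agree on the frozen/mutable partition: the frozen vertices of $\tQ$ are indexed by the rectangles $\muu(w^{-1}, a)/\muu(v^{-1}, a)$ for $a \in [n-1]$, which under $r \mapsto F_r$ correspond precisely to the boundary faces of $G(D)$. Both quivers also apply the same convention of omitting arrows between two frozen vertices.

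Using \cref{prop:Lecdiagram}, each $\lar r$ is a nested chain of hooks with corners at the sequence $\NW^i(\ir, \jr)$ in $\la$. A morphism $\lar r \xto p \lar{r'}$ corresponds to shifting this chain diagonally by $p$ and requiring the intersection with $\lar{r'}$ to form an order ideal of $\lar{r'}$; this reduces to explicit containment inequalities between the arm and leg lengths of corresponding hooks. I would establish this criterion first, then use it to identify which morphisms are irreducible.

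I expect the irreducible morphisms to split into two families: \emph{projection-type} morphisms $\lar r \xto 0 \lar{r'}$ whenever $\lar r \cap \lar{r'}$ is an order ideal of $\lar{r'}$, corresponding to the outgoing arrows $F_r \to F_a$, $F_r \to F_b$, $F_r \to F_c$ in \cref{fig:quiver_rules}; and \emph{insertion-type} morphisms $\lar{r_c} \xto 1 \lar r$ in case~4, where $\lar r$ is obtained from $\lar{r_c}$ by prepending a hook in Frobenius coordinates, corresponding to the incoming arrow $F_c \to F_r$. For each of the four local configurations, I would identify $r_a, r_b, r_c$ geometrically as the NW corners of the adjacent faces (or the external face $F_0$, in which case the arrow is omitted), verify the predicted morphisms are irreducible, and check that no other irreducible morphisms exist between pairs with at least one mutable vertex.

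The main obstacle is the reducibility analysis: showing that any morphism $\lar r \xto p \lar{r'}$ outside the two families above must factor through an intermediate $\lar{r''}$ for some $r'' \in \Jo$. The strategy is to exhibit the intermediate dot $r''$ by tracking the sequence of $\NW$-iterates connecting $t_r$ and $t_{r'}$ in $G(D)$ and choosing a dot along this path; the Le condition guarantees enough intermediate dots to provide the factorization. Carefully handling degenerate local configurations, as well as the bookkeeping for arrows involving $F_0$ or connecting two frozen vertices, will constitute the bulk of the combinatorial verification.
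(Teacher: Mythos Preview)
Your plan is essentially the paper's approach: use the Frobenius-coordinate description from \cref{prop:Lecdiagram}, match the two kinds of arrows of $\QD$ to the morphisms $\lar r \xto 0 \lar a$, $\lar r \xto 0 \lar b$, and $\lar c \xxto 1 \lar r$, and then prove no other morphisms are irreducible. One refinement worth noting: the paper's reducibility argument is more streamlined than your ``track the $\NW$-iterates between $t_r$ and $t_{r'}$'' plan. For any morphism $\lar r \xto p \lar{r'}$ with $p \geq 1$, the paper factors it through the northwest neighbor of the \emph{target}: since $\lar{c'} \xxto 1 \lar{r'}$ is always injective (where $F_{c'}$ is the face northwest of $t_{r'}$), one automatically gets $\lar r \xto{p-1} \lar{c'} \xto 1 \lar{r'}$, reducing to the case $p = 1$, $r = c'$. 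For $p = 0$, the paper observes that $(\ir, \jr) \geq (i^{(r')}, j^{(r')})$ forces a directed path from $F_r$ to $F_{r'}$ in $\QD$ using only up/left/up-left arrows, which realizes the factorization. This two-case split avoids the more delicate bookkeeping you anticipate; otherwise your outline matches the paper's proof.
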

\begin{proof}
If the Frobenius coordinates of a Young diagram $\la$ are given by $\la=[(a_1,b_1),\dots,(a_d,b_d)]$, we set $\la^1:=(a_1,b_1),\dots,\la^d:=(a_d,b_d)$, and $\la^{d+1}=\la^{d+2}=\dots=(0,0)$. Let us write $(a,b)\geq (a',b')$  if $a\geq a'$ and $b\geq b'$.

Let $\la \xto p \mu$ be a morphism.  Since its image is an order ideal of $\mu$, a morphism $\la\xto p\mu$ exists if and only if $\la^1\geq \mu^{p+1}$, $\la^2\geq \mu^{p+2}$, etc. Moreover, if $p<q$ and we have a morphism $\la\xto p\mu$ then  the morphism $\la\xto q\mu$ is not irreducible: it factors through $\la\xto p \mu\xto{q-p} \mu$. 
Also note that $\la\xto0\mu$ if and only if $\mu\subset \la$. We write $\la\xxto p \mu$ if the morphism $\la\xto p\mu$ exists and is \emph{injective} (i.e., $\shift^p(\la)\subset \mu$ is an order ideal of $\mu$). This is equivalent to $\la^1= \mu^{p+1}$, $\la^2= \mu^{p+2}$, etc.

By \cref{prop:Lecdiagram}, for each $r\in\Jo$, we have $\lar r^1=(\ir,\jr)$. Thus all Young diagrams $\{\lar r\}_{r\in\Jo}$ are different.  Observe that the morphism $\la\xxto 0\mu$ exists if and only if $\la=\mu$, in which case it is a trivial morphism. 

Let $r\in\Jo$ and suppose that the neighboring faces $F_a,F_b,F_c,F_r$ of $t_r$ in $G(D)$ are labeled as in \cref{fig:neigh}. It follows from \cref{prop:Lecdiagram} that we have morphisms $\lar r\xto 0 \lar a$, $\lar r\xto 0 \lar b$, $\lar c\xxto 1\lar r$.

We first show that for $r,r'\in\Jo$, if there is no arrow $F_r\to F_{r'}$ in $\QD$ then there is no arrow $r\to r'$ in $\tQ$. Indeed, let $\nu:=\lar r$ and $\nu':=\lar{r'}$, and  suppose that there is no arrow $F_r\to F_{r'}$ in $\QD$ but we have a morphism $\nu\xto p\nu'$ for some $p\geq0$. Assume that the regions around $t_{r'}$ in $G(D)$ are labeled by $F_{a'},F_{b'},F_{c'},F_{r'}$ as in \cref{fig:neigh}. 
If $p \geq 1$, then $\nu\xto p\nu'$ factors through $\nu \xto{p-1}\lar{c'}\xto{1}\nu'$. Therefore such a morphism is not irreducible unless $p=1$ and the morphism $\nu\xto{p-1}\lar{c'}$ is trivial, i.e., $r=c'$. Since there is no arrow $F_r\to F_{r'}$ in $\QD$, we must have either $c'=a'$, or $c'=b'$, or both. Without loss of generality, assume that $c'=a'(=r)$. If both $r$ and $r'$ are frozen then $\tQ$ contains no arrow between them. Thus at least one of them must be mutable, so the horizontal edge of $G(D)$ between $r$ and $r'$ must have another vertex to the right of $t_{r'}$. Let that vertex be labeled by $t_{q}$, then we have morphisms $\nu\xto 1\lar q\xto 0 \nu'$, thus we see that indeed our morphism $\nu\xto p\nu'$ is not irreducible when $p\geq 1$.

Assume now that $p = 0$, which implies that $(\ir,\jr) \geq (\irp,\jrp)$ and $(\ir,\jr) \neq (\irp,\jrp)$.  From the definition of the Le-diagram quiver $\QD$, we see that there exists a path from $F_r$ to $F_{r'}$ that consists of arrows all going up, left, or up-left. The composition of the corresponding morphisms gives the morphism $\nu\xto0\nu'$, which shows that it is not irreducible if there is no arrow $F_r\to F_{r'}$ in $\QD$.

It remains to show that if we have an arrow $F_r\to F_{r'}$ in $\QD$ then the morphism $\nu\to\nu'$ is irreducible. Let the regions around $t_r$ (resp., $t_{r'}$) be labeled by $F_a,F_b,F_c,F_r$ (resp., $F_{a'},F_{b'},F_{c'},F_{r'}$) as above. First, suppose that $r=c'$, in which case our morphism is $\nu\xxto1\nu'$.  
If it factors as $\nu \xto p \nu'' \xto q \nu'$ then the injectivity of $\nu \xxto1 \nu'$ forces $\nu \xto p \nu''$ to be injective, and thus we must have $\nu \xxto 1 \nu'' \xto 0 \nu'$.  But then $\nu$ is obtained from $\nu''$ by removing a hook, so by  \cref{prop:Lecdiagram}, $\nu \xxto 1 \nu''$ must be one of the down-right arrows in $\QD$. In this case, the hooks $(\nu'')^1$ and $(\nu')^1$ have to be incomparable, contradicting $\nu'' \xto0 \nu'$. 
Next, suppose that $r'$ equals to $a$ or $b$. Then we have a morphism $\nu\xto0 \nu'$. It is clear from the definition of $\QD$ that if $F_r\to F_{r'}$ is an arrow of $\QD$ then $\QD$ contains no directed path from $F_r$ to $F_{r'}$ of length more than $1$. If the morphism $\nu\xto0\nu'$ is not irreducible then there must be such a directed path from $r$ to $r'$ in $\tQ$. But we have already shown that each arrow of $\tQ$ appears as an arrow in $\QD$, thus $\nu\xto0\nu'$ must be irreducible.
\end{proof}

\section{Clusters and positroid varieties}\label{sec:positroid-varieties}

\Cref{prop:Lecagree} shows that the two (abstract) cluster algebras $\AQD$ and $\AQT$ are isomorphic. In this section, we further connect them by showing that the conjectural cluster structures they define on $\C[\PR_v^w]$ coincide.

\subsection{Background on positroid varieties}\label{sec:pos}
Let $G = \SL_n(\C)$ and $B,B_-,N,N_-$ denote the upper- and lower-triangular Borel subgroups, and their unipotent parts.  For $i\in [n-1]$, denote by $\ds_i\in G$ a (signed) permutation matrix representing $s_i\in W$ that has a $2\times 2$ block equal to $\begin{pmatrix}
0 & 1\\
-1 & 0
\end{pmatrix}$ in rows and columns $i,i+1$. Given a reduced word $w=s_{i_1}\cdots s_{i_m}$ for $w\in W$, we let $\dw \in G$ denote the (signed) permutation matrix given by $\dw:=\ds_{i_1}\cdots \ds_{i_m}$.  For $v \leq w$, we define the \emph{open Richardson variety} $\Rich_v^w$ to be the image of $B\dv B_- \cap B_- \dw B_-$ in $G/B_-$.  Thus $\Rich_v^w$ is a smooth affine subvariety of $G/B_-$.

Recall that $J = [n] \setminus \{k\}$. Let $\PJ \supset B_-$ denote the $J$-parabolic subgroup such that the projection $\pi_J: G \to G/\PJ \simeq \Gr(n-k,n)$ is given by sending a $n \times n$ matrix $g$ to the column span of its last $n-k$ columns. We sometimes denote $\pi_J(g)$ by $g\PJ$. For an $(n-k)$-element subset $I$ of $[n]$, we denote by $\Delta_I$ the corresponding \emph{Pl\"ucker coordinate} on $\Gr(n-k,n)$, i.e., the maximal $(n-k)\times (n-k)$ minor of $g$ with row set $I$ and column set $[k+1,n]$.

For $(v,w) \in Q^J$, the \emph{open positroid variety} is the image $\PR_v^w := \pi_J(\Rich_v^w) \subset \Gr(n-k,n)$; see \cite{Lus,KLS}.  It is isomorphic to $\Rich_v^w$, and is a smooth affine subvariety of $\Gr(n-k,n)$.  For other descriptions of $\PR_v^w$, see \cite{Pos,BGY}.

\subsection{Leclerc's functions}
Fix $v \leq w$.  Following~\cite[Section~2]{Lec}, denote $N'(v) = N\cap (\dv^{-1}N \dv) \subset N$ and  $N_{v,w}:=N'(v)\cap \dv^{-1}B_-\dw B_-$. We will be interested in the variety 

\begin{equation}\label{eq:vNvw}
\dv N_{v,w}= \dv N\cap N\dv\cap B_-\dw B_-.
\end{equation}

\begin{lemma}[{\cite[Theorem~2.3]{BGY}, \cite[Lemma~2.2]{Lec}}]\label{lemma:vNvw}
The map $\dv N_{v,w} \to G/B_-$ given by $g \mapsto gB_-$ gives an isomorphism $\dv N_{v,w}\xrasim \Rich_v^w$.
\end{lemma}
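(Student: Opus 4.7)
The plan is to verify the three standard steps for an isomorphism of this form: the map $g \mapsto gB_-$ is well-defined into $\Rich_v^w$, injective, and surjective with a regular inverse.

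First, I would rewrite $\dv N_{v,w}$ in the more symmetric form given in~\eqref{eq:vNvw}, which is immediate since $\dv N'(v) = \dv N \cap N \dv$ follows directly from $N'(v) = N \cap \dv^{-1} N \dv$. Well-definedness is then easy: for $g \in \dv N_{v,w}$ we have $g \in N \dv \subset B\dv B_-$ and $g \in B_-\dw B_-$, so $gB_- \in \Rich_v^w$. Injectivity reduces to $N \cap B_- = \{e\}$: if $\dv n_1 B_- = \dv n_2 B_-$ with $n_1, n_2 \in N$, then $n_2^{-1} n_1 \in N \cap B_- = \{e\}$.

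The substance lies in surjectivity, and the key tool is the standard Bruhat factorization $N = (N \cap \dv N \dv^{-1}) \cdot (N \cap \dv N_- \dv^{-1})$, which is an isomorphism of algebraic varieties. Given $gB_- \in \Rich_v^w$, write $g = b \dv b_-$ with $b \in B$ and $b_- \in B_-$; decomposing $b = tu$ with $t \in T$, $u \in N$ and conjugating using $T \subset B_-$ and $\dv^{-1} T \dv \subset B_-$, we reduce to $gB_- = u'\dv B_-$ for some $u' \in N$. Factor $u' = u_1 u_2$ with $u_1 \in N \cap \dv N \dv^{-1}$ and $u_2 \in N \cap \dv N_- \dv^{-1}$. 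Then $\dv^{-1} u_2 \dv \in N_- \subset B_-$ is absorbed into $B_-$, leaving $gB_- = u_1 \dv B_- = \dv n B_-$ where $n := \dv^{-1} u_1 \dv \in N \cap \dv^{-1} N \dv = N'(v)$. The hypothesis $g \in B_- \dw B_-$ translates directly to $\dv n \in B_- \dw B_-$, i.e., $n \in N_{v,w}$.

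The inverse $\Rich_v^w \to \dv N_{v,w}$ is regular because both the Bruhat factorization of $N$ and the identification of $B\dv B_-/B_-$ with $N \cap \dv N \dv^{-1}$ (via $u \mapsto u\dv B_-$, giving an algebraic retraction onto $\dv N'(v)$) are morphisms of varieties. Set-theoretically the proof is essentially a direct verification; the only subtle point is ensuring that the inverse is a morphism, and this is standard from the theory of Bruhat decompositions. Thus the main obstacle is merely to set up the correct unipotent factorization and track the cosets carefully.
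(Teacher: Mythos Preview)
The paper does not give its own proof of this lemma: it is stated with citations to \cite[Theorem~2.3]{BGY} and \cite[Lemma~2.2]{Lec}, and the following remark explains only that the present right-sided version follows from Leclerc's left-sided statement by replacing $g,v,w$ with their inverses. So there is no ``paper's proof'' to compare against beyond the observation that the two sides are related by inversion.

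Your direct argument is correct and is essentially the standard one underlying those references. The key step --- the factorization $N = (N \cap \dv N \dv^{-1}) \cdot (N \cap \dv N_- \dv^{-1})$ as varieties --- is exactly what identifies $B\dv B_-/B_-$ with $N \cap \dv N \dv^{-1} \cong \dv N'(v)$ and produces the regular retraction; the rest (well-definedness, injectivity from $N \cap B_- = \{e\}$, and the pullback of the condition $g \in B_-\dw B_-$) is bookkeeping you carried out correctly. In short, you have supplied the proof that the paper outsources; nothing is missing.
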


\begin{remark}\label{rmk:easy_hard}
Both~\cite{Lec} and~\cite{SSBW} work with the left-sided flag variety $B_-\bs G$. In particular, Leclerc shows that the map $g\mapsto B_-g$ gives an isomorphism $\dv N_{v,w}\xrasim B_-\bs (B_-\dv B\cap B_-\dw B_-)$. \Cref{lemma:vNvw} follows from this statement by replacing $g,v,w$ with their inverses. However, since in either case one works with \emph{rightmost} positive distinguished subexpressions, switching from $B_-\bs G$  to $G/B_-$ has a drastic effect on the combinatorics of Leclerc's quivers, as one can see by comparing \cref{sec:Leclerc_cluster_algebra} with~\cite[Section~7]{Lec} or~\cite{SSBW}. In fact, when working with $B_-\bs G$, Leclerc's cluster structure does not in general coincide with (either source or target labeled versions of) the cluster structure coming from Postnikov diagrams; see~\cite[Appendix~B]{SSBW}.
\end{remark}

For $u \in W$ and $a \in [n-1]$, let $\omega_a$ and $\muu(u,a)$ be as in \cref{sec:Lec_rep}. Leclerc~\cite{Lec} considers a family of functions on the unipotent group $N$: for each $r\in \Jo$, the corresponding function~is
\begin{equation}\label{eq:f_r}
f_r:=\Delta_{\Vi{r-1}\omega_{i_r} ,\Wi{r-1}\omega_{i_r} }:N\to \C
\end{equation}
where $\Delta_{A,B}$ is the minor whose rows and columns are indexed by $A$ and $B$ respectively.    The functions $f_r$ restrict to functions on $N_{v,w}$. Leclerc proves that the irreducible (as elements of $\C[N]$) factors of $\{f_r \mid r \in \Jo\}$ form the initial cluster variables of a cluster subalgebra of $\C[N_{v,w}]$.  

\begin{lemma}\label{lem:skewirred}
For $u'\leq u\in W$, if the skew shape $\muu(u,a)/\muu(u',a)$ is connected then the minor $\Delta_{u'\omega_a, u\omega_a}$ is an irreducible element of $\C[N]$.
\end{lemma}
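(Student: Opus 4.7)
The plan is to identify $\Delta_{u'\omega_a, u\omega_a}$ with the semi-invariant function $\varphi_{U_\lambda} \in \C[N]$ attached to the preprojective algebra module $U_\lambda$ of \cref{sec:quiver_rep}, where $\lambda$ is the Young diagram obtained by rotating the skew shape $\muu(u,a)/\muu(u',a)$ by $180^\circ$ inside its $(n-a) \times a$ bounding rectangle. Granting this identification, the lemma reduces to the indecomposability of $U_\lambda$: by the principle underlying Leclerc's~\cite{Lec} construction, the function $\varphi_M \in \C[N]$ attached to a $\Lambda$-module $M$ is multiplicative under direct sums, and every nontrivial factorization $\varphi_M = fg$ in $\C[N]$ comes from a corresponding direct sum decomposition of $M$; hence $\varphi_M$ is irreducible in $\C[N]$ if and only if $M$ is indecomposable.

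The first step is to establish the identification $\Delta_{u'\omega_a, u\omega_a} = \varphi_{U_\lambda}$. For $(u', u, a) = (v, w, i_r)$ with $r \in \Jo$, this is essentially Leclerc's \cite[Proposition~4.3]{Lec}. The general case follows by applying the same statement with $(u', u)$ in place of $(v, w)$, since both the construction of $U_\lambda$ in \cref{sec:quiver_rep} and the minor $\Delta_{u'\omega_a, u\omega_a}$ depend only on the Young diagram $\lambda$, and not on how the pair $u' \leq u$ arose.

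The second step is the indecomposability of $U_\lambda$ when $\lambda$ is a connected Young diagram. Since $\lambda$ contains $(1,1)$, the basis vector $e_{1,1}$ generates $U_\lambda$ as a $\Lambda$-module via the maps $\phir_c, \phil_c$, so any $f \in \operatorname{End}_\Lambda(U_\lambda)$ is determined by $f(e_{1,1})$. A direct check using the preprojective relations shows that connectedness of $\lambda$ forces $\operatorname{End}_\Lambda(U_\lambda) = \C \cdot \operatorname{id}$, and indecomposability follows from Fitting's lemma.

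Combining the two steps, if $\muu(u,a)/\muu(u',a)$ is connected then $\lambda$ is a connected Young diagram, $U_\lambda$ is indecomposable, and $\Delta_{u'\omega_a, u\omega_a} = \varphi_{U_\lambda}$ is irreducible in $\C[N]$. I expect the main obstacle to be the first step: rigorously matching the explicit polynomial $\Delta_{u'\omega_a, u\omega_a} \in \C[N]$ with the representation-theoretically defined semi-invariant $\varphi_{U_\lambda}$ requires unwinding Leclerc's conventions, and invoking the ``factorization versus decomposition'' principle in the precise generality required draws on more of the Geiss--Leclerc--Schroer framework than is recorded in the excerpt.
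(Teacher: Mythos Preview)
Your approach differs substantially from the paper's, and while it could plausibly be completed for the sole application (\cref{cor:irred}), it has genuine gaps for the lemma as stated.

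The paper's argument is short and elementary: the minor $\Delta_{u'\omega_a, u\omega_a}$ is homogeneous for the grading $\deg(g_{ij}) = j - i$ on $\C[N]$, so any factorization may be taken into homogeneous factors. Restricting to the subspace of upper-unitriangular matrices constant along diagonals, the minor becomes, via the Jacobi--Trudi identity, the skew Schur function $s_{\muu(u,a)/\muu(u',a)}$ in the diagonal variables. Irreducibility of skew Schur functions for connected skew shapes is quoted from the literature, and homogeneity (all generators having strictly positive degree) lifts this back to $\C[N]$.

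Two issues with your route. First, for general $u' \leq u$ the outer shape $\muu(u,a)$ is not a rectangle (e.g.\ $n=5$, $a=2$, $u\omega_2 = \{2,4\}$ gives $\muu(u,2)=(2,1)$), so the $180^\circ$ rotation of $\muu(u,a)/\muu(u',a)$ is a genuine skew shape, not a Young diagram; you would need to define $U_\theta$ for skew $\theta$ and redo both the identification with $\varphi_{U_\theta}$ and the endomorphism-ring computation in that generality. Second, the principle ``$\varphi_M$ irreducible $\Leftrightarrow$ $M$ indecomposable'' is not a general theorem: multiplicativity $\varphi_{M\oplus N}=\varphi_M\varphi_N$ gives only the easy direction, while the direction you actually need---indecomposability of $M$ forces irreducibility of $\varphi_M$ in the UFD $\C[N]$---is precisely what requires proof. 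The standard argument passes through rigidity of $M$ and the identification of $\varphi_M$ with a cluster variable, but the modules attached to arbitrary connected skew shapes need not be rigid. For \cref{cor:irred} specifically the shapes are straight and the modules are rigid summands of Leclerc's cluster-tilting object, so your strategy can be salvaged there, at the cost of invoking considerably more GLS machinery than the paper's two-line specialization.
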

\begin{proof}
The polynomial $\Delta_{u'\omega_a, u\omega_a}(g)$ is homogeneous with $\deg(g_{i,j})=j-i$. Restricting to the subspace of $N$ consisting of matrices constant along diagonals, we see that the result is implied by the Jacobi-Trudi formula combined with the irreducibility~\cite[Theorem~1]{RSvWcorr} of skew Schur functions indexed by connected skew shapes.
\end{proof}

Suppose now that $(v,w)\in Q^J$. As we have established in \cref{sec:Lec_rep}, for all $r\in \Jo$, the skew shape $\muu(\Wi{r-1},i_r)/\muu(\Vi{r-1},i_r)$ is a $180^\circ$ rotation of a Young diagram $\lar r$, thus we have shown the following.
\begin{corollary}\label{cor:irred}
For $(v,w)\in Q^J$ and all $r\in \Jo$, the function $f_r$ defined in~\eqref{eq:f_r} is an irreducible element of $\C[N]$.
\end{corollary}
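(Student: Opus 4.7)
The plan is to apply \cref{lem:skewirred} directly. By \cref{lem:skewirred}, to show that $f_r=\Delta_{\Vi{r-1}\omega_{i_r},\Wi{r-1}\omega_{i_r}}$ is irreducible in $\C[N]$, it suffices to check that the skew shape $\muu(\Wi{r-1},i_r)/\muu(\Vi{r-1},i_r)$ is connected. Thus the whole task reduces to a combinatorial connectedness statement about this skew shape.

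The key observation is that in \cref{sec:Lec_rep} it was already established that, for $(v,w)\in Q^J$, the shape $\muu(\Wi{r-1},i_r)/\muu(\Vi{r-1},i_r)$ is a $180^\circ$ rotation of the Young diagram $\lar r$. Since rotating by $180^\circ$ preserves connectedness of the underlying set of boxes, it is enough to show that $\lar r$ is a connected, nonempty Young diagram. But any nonempty (ordinary) Young diagram is connected, because every row shares its first column with the row above, so all boxes are reachable from $(1,1)$ through edge-adjacent steps. Nonemptiness of $\lar r$ for $r\in\Jo$ is immediate from \cref{prop:Lecdiagram}: the diagram $\lar r$ contains the hook $\Hook(\ir,\jr)$ with $\ir,\jr\geq 1$, hence is nonempty.

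Combining these observations, the skew shape in question is connected, and \cref{lem:skewirred} then yields irreducibility of $f_r$ in $\C[N]$.

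There is essentially no obstacle here: the only nontrivial input is \cref{lem:skewirred} (which uses Jacobi--Trudi and irreducibility of connected skew Schur functions) together with the shape identification from \cref{sec:Lec_rep}. The latter already does the real work of matching Leclerc's indexing data with honest Young diagrams, and once that identification is in hand the corollary is essentially immediate.
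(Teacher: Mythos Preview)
Your proof is correct and follows exactly the paper's approach: the paper notes, in the sentence immediately preceding the corollary, that the skew shape $\muu(\Wi{r-1},i_r)/\muu(\Vi{r-1},i_r)$ is a $180^\circ$ rotation of the Young diagram $\lar r$, hence connected, and then invokes \cref{lem:skewirred}. You simply make explicit the easy details (nonemptiness via \cref{prop:Lecdiagram}, connectedness of nonempty Young diagrams) that the paper leaves implicit.
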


\subsection{Face labels}\label{sec:face-labels}
So far the faces of $G(D)$ have been labeled by an abstract set $\{F_r\}_{r\in \Jo\sqcup \{0\}}$.  We now identify each face $F_r$ with an $(n-k)$-element subset of $[n]$, so that it would correspond to a Pl\"ucker coordinate on $\Gr(n-k,n)$.

\def\wirelw{1pt}
\def\dotscl{1}
\def\roc{5}
\def\setscl{1}
\def\rbound{19}
\def\lbound{-2.5}
\def\bnscl{1}

\def\leop{0.5}
\def\strandlw{1pt}

The graph $G(D)$ has $n$ distinguished paths $p_1,p_2,\ldots,p_n$ connecting boundary vertices, called \emph{strands}. For $a\in[n]$, the strand $p_a$ starts\footnote{This is called the \emph{source-labeling} of strands. For the other convention, called \emph{target-labeling}, the path $p_a$ \emph{ends} at vertex $a$.} at the boundary vertex labeled $a$, and then travels along the edges of $G(D)$, making turns at each vertex $t_r$ according to the following ``rules of the road'' (cf.~\cite[Figure~20.2]{Pos}):
\def\leop{0.4}
\begin{equation}\label{eq:rules_road}
\begin{tikzpicture}[baseline=(Z.base)]
\def\nodescale{0.8}
\def\qlw{1pt}
\def\qop{1}
\coordinate(Z) at (0,0);

\def\Finnersep{2pt}
\def\stp{3}

\def\cola{red}
\def\colb{blue}
\def\colc{green!70!black}
\def\cold{brown!80!black}
\def\roc{10}
\def\arrlw{1pt}
\def\one{1}

\node[scale=\nodescale](A) at (0,0){
\begin{tikzpicture}
\draw[line width=0.8pt,opacity=\leop] (0,0)--(1,0);
\draw[line width=0.8pt,opacity=\leop] (0,0)--(0,-1);
\node[draw=white,circle,scale=0.8,inner sep=0pt,fill=white] (A) at (0,0) {\textcolor{white}{$t_r$}};
\node[draw,circle,scale=0.8,inner sep=0pt,fill=white,opacity=\leop] (A) at (0,0) {$t_r$};
\begin{scope}[xshift=4pt,yshift=-4pt]
\draw[rounded corners=\roc,->,draw=\cola,line width=\arrlw] (0,-\one) --(0,0)--(\one,0);
\end{scope}
\begin{scope}[xshift=-4pt,yshift=4pt]
\draw[rounded corners=\roc,->,draw=\colb,line width=\arrlw] (\one,0) --(0,0)--(0,-\one);
\end{scope}
\end{tikzpicture}
};

\node[scale=\nodescale](A) at (\stp,0){
\begin{tikzpicture}
\draw[line width=0.8pt,opacity=\leop] (-1,0)--(1,0);
\draw[line width=0.8pt,opacity=\leop] (0,0)--(0,-1);
\node[draw=white,circle,scale=0.8,inner sep=0pt,fill=white] (A) at (0,0) {\textcolor{white}{$t_r$}};
\node[draw,circle,scale=0.8,inner sep=0pt,fill=white,opacity=\leop] (A) at (0,0) {$t_r$};

\begin{scope}[xshift=-4pt,yshift=-4pt]
\draw[rounded corners=\roc,->,draw=\cola,line width=\arrlw] (0,-\one) --(0,0)--(-\one,0);
\end{scope}

\begin{scope}[xshift=4pt,yshift=-4pt]
\draw[rounded corners=\roc,->,draw=\colb,line width=\arrlw] (\one,0) --(0,0)--(0,-\one);
\end{scope}

\begin{scope}[xshift=0,yshift=4pt]
\draw[rounded corners=\roc,->,draw=\colc,line width=\arrlw] (-\one,0)--(\one,0);
\end{scope}
\end{tikzpicture}
};

\node[scale=\nodescale](A) at (2*\stp,0){
\begin{tikzpicture}
\draw[line width=0.8pt,opacity=\leop] (0,0)--(1,0);
\draw[line width=0.8pt,opacity=\leop] (0,1)--(0,-1);
\node[draw=white,circle,scale=0.8,inner sep=0pt,fill=white] (A) at (0,0) {\textcolor{white}{$t_r$}};
\node[draw,circle,scale=0.8,inner sep=0pt,fill=white,opacity=\leop] (A) at (0,0) {$t_r$};

\begin{scope}[xshift=4pt,yshift=4pt]
\draw[rounded corners=\roc,->,draw=\cola,line width=\arrlw] (\one,0) --(0,0)--(0,\one);
\end{scope}

\begin{scope}[xshift=4pt,yshift=-4pt]
\draw[rounded corners=\roc,->,draw=\colb,line width=\arrlw]  (0,-\one)--(0,0)--(\one,0);
\end{scope}

\begin{scope}[xshift=-4pt]
\draw[rounded corners=\roc,->,draw=\colc,line width=\arrlw] (0,\one)--(0,-\one);
\end{scope}
\end{tikzpicture}
};

\node[scale=\nodescale](A) at (3*\stp,0){
\begin{tikzpicture}
\draw[line width=0.8pt,opacity=\leop] (-1,0)--(1,0);
\draw[line width=0.8pt,opacity=\leop] (0,1)--(0,-1);
\node[draw=white,circle,scale=0.8,inner sep=0pt,fill=white] (A) at (0,0) {\textcolor{white}{$t_r$}};
\node[draw,circle,scale=0.8,inner sep=0pt,fill=white,opacity=\leop] (A) at (0,0) {$t_r$};

\def\ptt{6pt}
\def\pt{3pt}
\begin{scope}[xshift=\pt,yshift=\ptt]
\draw[rounded corners=\roc,->,draw=\cola,line width=\arrlw] (\one,0) --(0,0)--(0,\one);
\end{scope}

\begin{scope}[xshift=-\ptt,yshift=-\pt]
\draw[rounded corners=\roc,->,draw=\colb,line width=\arrlw] (0,-\one) --(0,0)--(-\one,0);
\end{scope}

\begin{scope}
\draw[rounded corners=\roc,->,draw=\colc,line width=\arrlw] (-0.1,\one)--(-0.1,-\one);
\draw[rounded corners=\roc,->,draw=\cold,line width=\arrlw] (-\one,0.1)--(\one,0.1);
\end{scope}
\end{tikzpicture}
};

\end{tikzpicture}
\end{equation}
In other words, the strand $p_a$ zig-zags in the northwest direction until it hits the north or west boundary, after which it goes straight southward or straight eastward until it arrives at the boundary again. If there is no edge of $G(D)$ incident to the boundary vertex $a$ then $p_a$ is taken to be a small clockwise or counterclockwise loop depending on whether $a$ is on a vertical or horizontal edge of $\la$.  Every face $F_r$ of $G(D)$ is labeled by an $(n-k)$-element subset of $[n]$, consisting of those $a$ such that $F_r$ lies to the right of $p_a$. See \cref{fig:faces_Le} for the labeling of the Le-diagram from \cref{ex:Le_big}. From now on, we identify $F_r$ with the corresponding subset, and write $\Delta_{F_r}$ for the corresponding Pl\"ucker coordinate on $\Gr(n-k,n)$.

It is known (see~\cite[Section~5.2]{KLS}) that $F_0$ coincides with the lexicographically maximal $(n-k)$-element subset $S\subset [n]$ such that $\Delta_{S}$ is not identically zero on $\PR_v^w$. Moreover, we have $\Delta_{F_r}(x)\neq0$ for any $x\in\PR_v^w$ and any $r\in\bound\sqcup\{0\}$. Since the image of the Pl\"ucker embedding lies in the projective space, we always assume that the Pl\"ucker coordinates are rescaled (``gauge fixed'') so that $\Delta_{F_0}(x)=1$ for all $x\in \PR_v^w$.

\subsection{Main result}\label{sec:main}
Recall from \cref{sec:cluster-algebra} that the cluster algebra $\AQD$ is a subring of the field of rational functions in the variables $\{x_{F_r}\}_{r\in\Jo}$. The following result is explicitly conjectured in \cite[Remark~4.6]{MStwist}; the statement may be considered implicitly conjectured in~\cite{Pos,Sco}.
\def\ninj{\eta}
\begin{theorem}\label{thm:main}
For all $(v,w)\in Q^J$, the map sending $x_{F_r}\mapsto \Delta_{F_r}$ for each $r\in\Jo$ induces a ring isomorphism $\ninj:\AQD\xrasim \C[\PR_v^w]$ (with $\Delta_{F_0}=1$ on $\PR_v^w$).
\end{theorem}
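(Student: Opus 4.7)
The strategy is to realize the two algebras $\AQD$ and $\C[\PR_v^w]$ as subrings of the function field of $\PR_v^w$ and verify the two inclusions separately. First I would check that the assignment $x_{F_r}\mapsto \Delta_{F_r}$ at least gives a well-defined ring homomorphism $\ninj\colon\AQD\to\C(\PR_v^w)$. Since $\AQD$ is generated by cluster variables and the inverses of frozen variables, this reduces to exhibiting each frozen $\Delta_{F_r}$ (for $r\in\bound$) as an invertible element of $\C[\PR_v^w]$, which follows from the known nonvanishing of boundary face Pl\"ucker coordinates on $\PR_v^w$ recalled in \cref{sec:face-labels}, together with the gauge-fixing $\Delta_{F_0}\equiv 1$.

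For the inclusion $\ninj(\AQD)\subseteq\C[\PR_v^w]$, the plan is to transport Leclerc's result through the quiver identification proved in \cref{prop:Lecagree}. Concretely, \cref{lemma:vNvw} gives an isomorphism $\dv N_{v,w}\simeq \Rich_v^w$, and composing with $\pi_J$ produces an isomorphism onto $\PR_v^w$. Pulling back Pl\"ucker coordinates along this composition turns each $\Delta_{F_r}$ into a minor of the form $\Delta_{\Vi{r-1}\omega_{i_r},\Wi{r-1}\omega_{i_r}}$ on $N_{v,w}$, i.e.\ precisely Leclerc's function $f_r$ from \eqref{eq:f_r}. The key computation is matching the subset $\Vi{r-1}\omega_{i_r}$ of row indices with the face label $F_r$ coming from the strand combinatorics of $G(D)$ in \cref{sec:face-labels}; this is a direct induction along the reduced word using the local rules \eqref{eq:rules_road} and the bijection between boxes of $\la$ and letters of $\bw$. \cref{cor:irred} ensures the $f_r$ are already irreducible, so they \emph{are} the initial cluster variables of Leclerc's cluster subalgebra, and \cref{prop:Lecagree} ensures the quiver structure matches $\QD$. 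Thus Leclerc's theorem delivers $\ninj(\AQD)\subseteq \C[\PR_v^w]$, which is the content of \cref{sec:positroid-varieties}.

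For the reverse inclusion $\C[\PR_v^w]\subseteq \ninj(\AQD)$, carried out in \cref{sec:surj}, the plan is to combine three external inputs. First, the Muller--Speyer twist automorphism of $\PR_v^w$ sends the Pl\"ucker coordinates to (Laurent monomials in) cluster variables of a seed obtained from $\QD$ by mutations, so every twisted Pl\"ucker coordinate lies in $\ninj(\AQD)$. Second, Muller's result on local acyclicity for positroid cluster algebras allows one to identify the cluster algebra with its upper cluster algebra. Third, the Berenstein--Fomin--Zelevinsky ``Starfish'' criterion, applied to a suitable seed, certifies that the upper cluster algebra contains all regular functions on the underlying variety once enough cluster variables are identified as regular. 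Using the twist to cycle through all Pl\"ucker coordinates, and recalling that Pl\"ucker coordinates generate $\C[\PR_v^w]$, yields the desired containment.

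The hard part is the second inclusion. The forward inclusion is essentially a dictionary between Leclerc's and Postnikov's combinatorics, and once \cref{prop:Lecagree} is in hand it is almost formal. By contrast, bounding $\C[\PR_v^w]$ from above by $\ninj(\AQD)$ requires controlling \emph{all} regular functions, not just a generating set of cluster variables; this is where the twist, local acyclicity and the upper cluster algebra machinery must be synchronised. The switch from $B_-\bs G$ to $G/B_-$ noted in \cref{rmk:easy_hard} is essential here: it is exactly what makes Leclerc's quiver agree, on the nose, with the Postnikov quiver used by Muller--Speyer, so that the two halves of the argument bolt together into the single cluster structure claimed by the theorem.
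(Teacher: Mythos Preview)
Your outline of the forward inclusion $\ninj(\AQD)\subseteq\C[\PR_v^w]$ matches the paper: the identification of $\Delta_{F_r}$ with Leclerc's $f_r$ is precisely \cref{lem:rotation}, and together with \cref{prop:Lecagree} and \cref{cor:irred} this yields \cref{cor:Lec}.

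Your plan for the reverse inclusion, however, has a genuine gap. The Muller--Speyer twist does \emph{not} send arbitrary Pl\"ucker coordinates to Laurent monomials in cluster variables; that monomial relation (\cref{prop:MS}) holds only for the face-label Pl\"ucker coordinates $\Delta_{F_r}$, $r\in\Jo$, i.e.\ the initial cluster. So ``using the twist to cycle through all Pl\"ucker coordinates'' does not work. What the paper actually does is: (i) composing $\ninj$ with the twist gives an injection $\inj$ under which the initial cluster corresponds to the Deodhar torus coordinates $\bt$ via~\eqref{eq:four}; (ii) \cref{prop:Lepath} expresses every matrix entry of the column-echelon form $h=\phi_w(\gt B_-)$ as a signed sum over directed paths in $\Gvec(D)$; (iii) the key step, \cref{lem:mutate}, pairs those paths across each mutable face $F_r$ to cancel the factor $\twF_r$ in the denominator, thereby showing $\C[\PR_v^w]$ lies in the Laurent ring of every one-step mutation of the initial seed. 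None of this is captured by your three bullets.

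There is also a second missing hypothesis: to pass from the upper bound (intersection of the initial and all one-step-mutated Laurent rings) to the upper cluster algebra via~\cite[Corollary~1.9]{BFZ}, one needs the extended exchange matrix $\tilde B(\QD)$ to have full rank. This is \cref{prop:fullrank} and requires its own inductive argument on $|\lambda|$. Only after these two ingredients are in place does local acyclicity (Muller--Speyer) plus $\AQD=\UCAQ$ (Muller) finish the proof as in the paper's final paragraph. Your description of the ``Starfish criterion'' as being ``once enough cluster variables are identified as regular'' is the wrong direction: the work is showing that all \emph{regular functions} are Laurent in each adjacent cluster, which is exactly the content of \cref{lem:mutate} that your plan omits.
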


When $v=1$ and $w$ is the maximal element of $W^J$ (i.e., when $D(v,w)$ is a $k\times (n-k)$ rectangle filled with dots), $\PR_v^w$ is the top-dimensional positroid variety in $\Gr(n-k,n)$, in which case \cref{thm:main} was shown by Scott~\cite{Sco}.

Recall that $\bound\subset \Jo$ is the set of $r\in \Jo$ such that $F_r$ labels a boundary face of $G(D)$. \Cref{thm:main} is equivalent to the following two explicit statements for Pl\"ucker coordinates on~$\PR_v^w$:
\def\clvar{x}
\begin{enumerate}
\item\label{item:main_1} We have $\ninj(\AQD)\subseteq\C[\PR_v^w]$, that is, the image $\ninj(\clvar)$ of every cluster variable $\clvar\in\AQD$ is a regular function on $\PR_v^w$. Equivalently, $\ninj(\clvar)$  can be written as a polynomial in the Pl\"ucker coordinates divided by a monomial in $\{\Delta_{F_r}\}_{r\in\bound}$.
\item\label{item:main_2} We have $\ninj(\AQD)\supseteq\C[\PR_v^w]$, that is,  the images of cluster variables generate $\C[\PR_v^w]$ \emph{as a ring}.
\end{enumerate}

In general, both of these statements are non-obvious. We will deduce~\eqref{item:main_1} from Leclerc's results in the next subsection. The non-trivial part of~\eqref{item:main_2} is that unlike in the case of the top-dimensional positroid variety~\cite{Sco}, not every Pl\"ucker coordinate is the image of a cluster variable. But \cref{thm:main} implies that every Pl\"ucker coordinate can be written as a polynomial in the images of cluster variables divided by a monomial in $\{\Delta_{F_r}\}_{r\in\bound}$.  We will prove this in \cref{sec:surj}.

\def\vf{v\cdot f}
\def\vfbar{\overline{\vf}}
\subsection{Converting Leclerc's functions into Pl\"ucker coordinates}

Let $\vf_r \in \C[\dv N_{v,w}]$ denote the image of $f_r$ under the isomorphism $\C[N_{v,w}] \simeq \C[\dv N_{v,w}]$.  Explicitly, we have $\vf_r := \Delta_{\vi r\omega_{i_r} ,\Wi{r-1}\omega_{i_r} }.$ Recall that the map $g\mapsto gB_-$ gives an isomorphism $\dv N_{v,w}\xrasim \Rich_v^w$, while the map $\pi_J:G/B_-\to G/\PJ$ restricts to an isomorphism $\Rich_v^w\xrasim \PR_v^w$. For a function $\vf\in\C[\dv N_{v,w}]$, denote by $\vfbar\in\C[\PR_v^w]$ the image of $\vf$ under the composition of these isomorphisms.

\begin{figure}
\begin{tikzpicture}

\node(X) at (6,0){
\begin{tikzpicture}[scale=1.4]
\def\nodescl{0.7}
\begin{scope}[opacity=\leop]
\draw (0,0)-- (2,0)--(2,1)--(3,1)--(3,3)--(0,3)--(0,0);
\node[anchor=north] (A) at (0.5,0) {\scalebox{\nodescl}{$1$}};
\node[anchor=north] (A) at (1.5,0) {\scalebox{\nodescl}{$2$}};
\node[anchor=north] (A) at (2.5,1) {\scalebox{\nodescl}{$4$}};
\node[anchor=west] (A) at (2,0.5) {\scalebox{\nodescl}{$3$}};
\node[anchor=west] (A) at (3,1.5) {\scalebox{\nodescl}{$5$}};
\node[anchor=west] (A) at (3,2.5) {\scalebox{\nodescl}{$6$}};
\draw[line width=0.8pt] (0.5,0.5)--(2,0.5);
\draw[line width=0.8pt] (1.5,1.5)--(3,1.5);
\draw[line width=0.8pt] (1.5,1.5)--(1.5,0);
\draw[line width=0.8pt] (2.5,2.5)--(2.5,1);
\draw[line width=0.8pt] (0.5,2.5)--(3,2.5);
\draw[line width=0.8pt] (0.5,2.5)--(0.5,0);
\end{scope}
\node[circle,scale=0.8,inner sep=0pt,fill=white] (A2) at (0.5,0.5) {$\strut$};
\node[circle,scale=0.8,inner sep=0pt,fill=white] (A1) at (1.5,0.5) {$\strut$};
\node[circle,scale=0.8,inner sep=0pt,fill=white] (A3) at (2.5,1.5) {$\strut$};
\node[circle,scale=0.8,inner sep=0pt,fill=white] (A4) at (1.5,1.5) {$\strut$};
\node[circle,scale=0.8,inner sep=0pt,fill=white] (A6) at (2.5,2.5) {$\strut$};
\node[circle,scale=0.8,inner sep=0pt,fill=white] (A8) at (0.5,2.5) {$\strut$};

\begin{scope}[opacity=\leop]
\node[draw,circle,scale=0.8,inner sep=0pt,fill=white] (B2) at (0.5,0.5) {$t_2$};
\node[draw,circle,scale=0.8,inner sep=0pt,fill=white] (B1) at (1.5,0.5) {$t_1$};
\node[draw,circle,scale=0.8,inner sep=0pt,fill=white] (B3) at (2.5,1.5) {$t_3$};
\node[draw,circle,scale=0.8,inner sep=0pt,fill=white] (B4) at (1.5,1.5) {$t_4$};
\node[draw,circle,scale=0.8,inner sep=0pt,fill=white] (B6) at (2.5,2.5) {$t_6$};
\node[draw,circle,scale=0.8,inner sep=0pt,fill=white] (B8) at (0.5,2.5) {$t_8$};
\end{scope}

\def\Fsep{2pt}
\def\Fscl{0.8}
\def\f{\textcolor{red}{5}}
\def\t{\textcolor{blue}{3}}
\node[scale=\Fscl,anchor=north west,inner sep=\Fsep] (F1) at (A1.south east) {$126$};
\node[scale=\Fscl,anchor=north west,inner sep=\Fsep] (F2) at (A2.south east) {$146$};
\node[scale=\Fscl,anchor=north west,inner sep=\Fsep] (F3) at (2.55,1.3) {$2\t4$};
\node[scale=\Fscl,anchor=north west,inner sep=\Fsep] (F4) at (1.65,1.3) {$2\t6$};
\node[scale=\Fscl,anchor=north west,inner sep=\Fsep] (F6) at (A6.south east) {$24\f$};
\node[scale=\Fscl,anchor=north west,inner sep=\Fsep] (F8) at (A8.south east) {$246$};
\node[scale=\Fscl,anchor=north west,inner sep=\Fsep] (F0) at (0.05,2.95) {$4\f6$};

\begin{scope}[yshift=0]
\draw[rounded corners=\roc,line width=\strandlw,red,->] (3,1.6)--(2.6,1.6)--(2.6,2.6)--(0.4,2.6)--(0.4,0);
\end{scope}
\begin{scope}[yshift=0]
\draw[rounded corners=\roc,line width=\strandlw,blue,->] (2,0.6)--(1.6,0.6)--(1.6,1.4)--(3,1.4);
\end{scope}

\end{tikzpicture}
};

\end{tikzpicture}
  \caption{\label{fig:faces_Le}Labeling the faces of a Le-diagram by subsets. The strand $p_5$ is shown in red and $p_3$ is shown in blue. Here we abbreviate $\{a,b,c\}$ as $abc$.}
\end{figure}
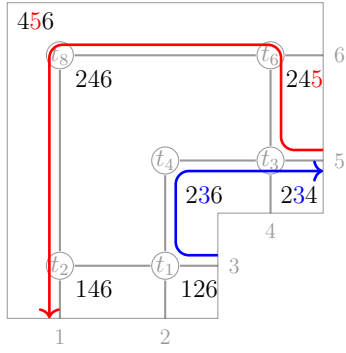

\begin{lemma}\label{lem:rotation}
Let $(v,w)\in Q^J$ and gauge-fix $\Delta_{F_0} = 1$ on $\PR_v^w$. Then for all $r\in\Jo$, the regular functions $\vfbar_r,\Delta_{F_r}\in\C[\PR_v^w]$ agree on $\PR_v^w$. 
Equivalently, we have 
\begin{equation}\label{eq:f'_r_F_r}
  f_r(g)=\frac{\Delta_{F_r}(\dv g\PJ)}{\Delta_{F_0}(\dv g\PJ)}\quad\text{for all $g\in  N_{v,w}$ and $r\in\Jo$.}
\end{equation}
\end{lemma}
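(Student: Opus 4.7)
The plan is to transfer the identity to a polynomial identity on the unipotent group $N$ via the substitution $n := \dv g \dv^{-1}$. Since $g \in N_{v,w} \subseteq N'(v) = N \cap \dv^{-1} N \dv$, we have $n \in N$ and $\dv g = n\dv$. Multiplying a matrix on the right by $\dv$ permutes columns according to $v$ (up to signs coming from $\dv$), so $\Delta_{A,B}(n\dv) = \pm \Delta_{A, vB}(n)$ for subsets $A, B$ of equal size. Combining this with the standard identification $F_0 = v[k{+}1, n]$ (the NW face of $G(D)$, equivalently the lex-maximum subset with $\Delta_{F_0}\not\equiv 0$ on $\PR_v^w$), and using $\Delta_{F_0, F_0}(n) = 1$ since $n$ is upper unitriangular, one obtains
\[
\Delta_{F_r}(\dv g\, \PJ)\big/\Delta_{F_0}(\dv g\,\PJ) \;=\; \Delta_{F_r,\,F_0}(n).
\]
On the Leclerc side, writing $g = \dv^{-1}n\dv$ and using $v\Vi{r-1} = \vi{r-1}$, the simultaneous row/column permutation by $\dv$ yields
\[
f_r(g) \;=\; \Delta_{\Vi{r-1}\omega_{i_r},\, \Wi{r-1}\omega_{i_r}}(g) \;=\; \pm\, \Delta_{\vi{r-1}\omega_{i_r},\, v\Wi{r-1}\omega_{i_r}}(n).
\]

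Thus the lemma reduces to the polynomial identity
\[
\Delta_{\vi{r-1}\omega_{i_r},\, v\Wi{r-1}\omega_{i_r}}(n) \;=\; \pm\,\Delta_{F_r,\, F_0}(n)
\]
on the affine subvariety $N \cap \dv N\dv^{-1} \subset N$, cut out by the vanishing relations $n_{a,b} = 0$ whenever $v^{-1}(a) > v^{-1}(b)$. Expanding both sides by the Leibniz rule, the non-vanishing monomials are indexed by bijections $\sigma$ from the row set to the column set obeying the two triangularity constraints $a \le \sigma(a)$ and $v^{-1}(a) \le v^{-1}(\sigma(a))$. Via Lindstr\"om--Gessel--Viennot applied to $G(D)$, each such $\sigma$ corresponds to a non-crossing family of paths between prescribed boundary labels; the two minors select the same families, since for the right-hand side this is built into the face-label convention, while for the left-hand side one uses the interpretation of $\vi{r-1}\omega_{i_r}$ and $v\Wi{r-1}\omega_{i_r}$ as the subword-of-$\bw$ data through position $r{-}1$ together with the identification of strand endpoints with the decorated permutation $\pi = wv^{-1}$.

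The main obstacle is making the bijection of LGV path families explicit and tracking the overall sign; the crucial combinatorial input is the source-labeling formula expressing $F_r$ in terms of $\vi{r-1}$, $\Wi{r-1}$, and $i_r$. Once this identification is established, a case analysis on the local configuration of $G(D)$ around the dot $t_r$ (matching \cref{fig:neigh}) furnishes the sign-preserving bijection and completes the proof. The overall sign is pinned down by requiring consistency with the gauge fixing $\Delta_{F_0}\equiv 1$, which forces the signs coming from the row/column permutations by $\dv$ to cancel in the ratio $\Delta_{F_r}/\Delta_{F_0}$.
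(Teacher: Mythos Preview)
Your reduction in the first two paragraphs is correct: conjugating by $\dv$ and using $F_0=v[k{+}1,n]$ does bring the claim to a comparison between $\Delta_{\vi{r-1}\omega_{i_r},\,v\Wi{r-1}\omega_{i_r}}(n)$ and $\Delta_{F_r,F_0}(n)$ on $N\cap \dv N\dv^{-1}$, and the signs from the column permutation by $\dv$ really do cancel in the ratio $\Delta_{F_r}/\Delta_{F_0}$. But the argument stalls at the third paragraph. The two minors you want to match have \emph{different sizes}: the left-hand side is an $i_r\times i_r$ minor while the right-hand side is $(n{-}k)\times(n{-}k)$, so there is no bijection between their Leibniz monomials in the free coordinates $n_{a,b}$. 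Your appeal to Lindstr\"om--Gessel--Viennot on $G(D)$ is not justified: the matrix $n$ is a generic element of $N\cap\dv N\dv^{-1}$, not the path matrix of $G(D)$, and no path model for its entries has been set up. The final paragraph then defers everything to an unspecified ``case analysis on the local configuration around $t_r$'' and an unstated ``source-labeling formula for $F_r$''; this is precisely the combinatorial core of the lemma, and it is not proved.

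The paper avoids this size mismatch by working directly with $g\in N$ rather than conjugating. The key observation is structural: since $w\in W^J$, the column set $B_r=\Wi{r-1}\omega_{i_r}$ is always of the form $[1,a_r]\sqcup[k{+}1,b_r]$, and $A_r=\Vi{r-1}\omega_{i_r}$ contains $[1,a_r]$ and is disjoint from $[b_r{+}1,n]$. Upper-unitriangularity of $g$ then lets one strip the columns $[1,a_r]$ and append rows and columns $[b_r{+}1,n]$ without changing the minor, yielding $\Delta_{A_r,B_r}(g)=\Delta_{C_r,[k{+}1,n]}(g)$ on all of $N$, where $C_r:=(A_r\setminus[1,a_r])\sqcup[b_r{+}1,n]$ has size $n{-}k$. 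The lemma thus reduces to the purely combinatorial identity $F_r=vC_r$, which the paper proves via a wiring-diagram model $\WDLE(v,w)$ isomorphic to $G(D)$ and a two-case check depending on whether $a\le k$ or $a\ge k{+}1$. Your missing ``source-labeling formula'' is exactly this identity $F_r=vC_r$; proving it is the content of the lemma, not something that can be cited.
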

\begin{proof}

We will prove~\eqref{eq:f'_r_F_r} more generally for all $g\in  N$. Observe that $F_0 = v[k+1,n]$ (see~\cite[Example~9.5]{GKL3}). Therefore for any $g\in  N$, the submatrix of $g$ with row set $v^{-1}F_0=[k+1,n]$ and column set $[k+1,n]$ is an $(n-k)\times(n-k)$ upper-triangular unipotent matrix, thus  $\Delta_{F_0}(\dv g) = 1$. It remains to show that $f_r(g)=\Delta_{F_r}(\dv g)$ for all $r\in \Jo$. 

Fix $r\in\Jo$. We have $f_r = \Delta_{A_r,B_r}$, where 
\begin{equation}\label{eq:A_B}
A_r = \Vi{r-1}\omega_{i_r}=\sv_{i_m}\cdots \sv_{i_r}\omega_{i_r}\quad\text{and}\quad B_r= \Wi{r-1}\omega_{i_r}=s_{i_m}\cdots s_{i_r}\omega_{i_r}.
\end{equation}
Recall from \cref{sec:Lec_rep} that we have two Young diagrams $\muu(\Vi{r-1},i_r)\subset \muu(\Wi{r-1},i_r)$ that fit inside an $(n-i_r)\times i_r$ rectangle, and moreover, $\muu(\Wi{r-1},i_r)$ is itself a rectangle. Thus 
there exist integers $a_r\in[0,k]$ and $b_r\in[k,n]$ such that $B_r=[1,a_r]\sqcup [k+1,b_r]$, so  $a_r+b_r-k=|B_r|=i_r$. And because $\muu(\Vi{r-1},i_r)\subset \muu(\Wi{r-1},i_r)$, we find that $[1,a_r]\subset A_r$ and $A_r\cap [b_r+1,n]=\emptyset$. Let us define $C_r:=(A_r\setminus[1,a_r])\sqcup [b_r+1,n]$, and thus $|C_r|=n-k$.  (See \cref{fig:chambers}(b) for an example.) It is clear that the functions $f_r=\Delta_{A_r,B_r}$ and $\Delta_{C_r,[k+1,n]}$ agree on $N$. Our next goal is to show
\begin{equation}\label{eq:C_r_F_r}
F_r=vC_r.
\end{equation}

\def\WD{\operatorname{WD}}
\def\WDD{\WD^\bullet}
\def\WDLE{\overline{\WDD}}
\def\pp{\overline{p}}

Let us first give a pictorial description of $A_r$ and $B_r$ using \emph{wiring diagrams}. It is analogous to~\cite[Section~9]{MR}. Draw a wiring diagram $\WD(w)$ for $w$, and for each $r\in\Jo$, place a dot labeled $t_r$ at the crossing that corresponds to $s_{i_r}$.  Denote this \emph{dotted wiring diagram} by $\WDD(v,w)$, cf. \cref{fig:chambers}(a). A wiring diagram $\WD(v)$ for $v$ is obtained from $\WDD(v,w)$ by ``uncrossing'' each dot, i.e., replacing each crossing of $\WDD(v,w)$ that has a dot by a pair of parallel wires. Label each wire in $\WD(w)$ and $\WD(v)$ by its right endpoint (the right endpoints are labeled $1,\dots, n$ from bottom to top). To each chamber $R$ of $\WD(w)$ we associate a set $B_R$  of wires that are below $R$ in $\WD(w)$. Similarly, we introduce a set $A_{R'}$ of wires that are below each chamber $R'$ of $\WD(v)$. Any chamber $R$ of $\WD(w)$ is contained inside a unique chamber $R'$ of $\WD(v)$, so we label the corresponding chamber $R$ of $\WDD(v,w)$ by the pair $(A_R,B_R)$, where $A_R:=A_{R'}$; see \cref{fig:chambers}(a). For each $r\in \Jo$, let $R_r$ be the chamber of $\WDD(v,w)$ that is immediately to the left of the dot labeled by $t_r$. It is straightforward to check (see also~\cite[Section~9]{MR}) that
\begin{equation}\label{eq:AA_BB}
  A_r=A_{R_r}\quad\text{and}\quad B_r=B_{R_r} \quad\text{for all $r\in \Jo$,}
\end{equation}
where $A_r,B_r$ are as in~\eqref{eq:A_B}.

\def\wirelw{1pt}
\def\dotscl{1}
\def\roc{5}
\def\setscl{1}
\def\rbound{19}
\def\lbound{-2.5}
\def\bnscl{1}

\def\leop{0.5}
\def\strandlw{1pt}

\def\xscl{0.4}
\def\yscl{0.7}
\begin{figure}
\scalebox{0.9}{
\begin{tabular}{c|c}
\scalebox{0.95}{
\begin{tikzpicture}[xscale=0.5,yscale=0.8,baseline=(Z.base)]
\coordinate(Z) at (0,3.5);
\def\ledot#1(#2,#3){
\xxdot{#1}(#2,#3)
\node[draw,circle,scale=\dotscl,inner sep=0pt,fill=white] (A) at (#2.5,#3.5) {$t_{#1}$};
}
\def\ex{.4}
\def\xxdot#1(#2,#3){
\fill[white] ($ (#2,#3) + (-0\ex,-0.1) $) rectangle ($ (#2,#3) + (1\ex,1.1) $);
\draw[line width=\wirelw,rounded corners=\roc] ($ (#2,#3) + (-0\ex,0) $) -- (#2,#3) --($ (#2,#3) + (1,1) $)--($ (#2,#3) + (1\ex,1) $);
\draw[line width=\wirelw,rounded corners=\roc] ($ (#2,#3) + (-0\ex,1) $) -- ($ (#2,#3) + (0,1) $) --($ (#2,#3) + (1,0) $)--($ (#2,#3) + (1\ex,0) $);
}

\foreach \i in {1,...,5,6}{
\draw[line width=\wirelw] (\lbound,\i) -- (\rbound,\i);
\node[scale=\bnscl,anchor=east] (A) at (\lbound,\i) {$\i$};
\node[scale=\bnscl,anchor=west] (A) at (\rbound,\i) {$\i$};
}
\ledot1(0,2)
\ledot2(2,1)
\ledot3(4,4)
\ledot4(6,3)
\xxdot5(8,2)
\ledot6(10,5)
\xxdot7(12,4)
\ledot8(14,3)

\def\sett(#1,#2,#3,#4){
\node[scale=\setscl,anchor=east] (A) at (#1,#2.5) {$#3,#4$};
}
\def\setx(#1,#2,#3,#4){
\node[scale=\setscl,anchor=east] (A) at ($(#1,#2.5)+(-0.5,0)$) {$#3,#4$};
}
\sett(0,2,13,45)
\sett(2,1,1,4)
\sett(4,4,1235,1456)
\sett(6,3,123,145)
\sett(8,2,13,14)
\sett(10,5,12345,12456)
\sett(12,4,1235,1245)
\sett(14,3,123,124)

\setx(\rbound,1,1,1)
\setx(\rbound,2,12,12)
\setx(\rbound,3,123,123)
\setx(\rbound,4,1234,1234)
\setx(\rbound,5,12345,12345)

\end{tikzpicture}
}
&
\def\lnn(#1,#2,#3,#4){
$#1$&$#2$&$#3$&$#4$\\\hline
}
\begin{tabular}{|l|l|l|l|}\hline
\lnn(r,A_r,B_r,C_r)
\lnn(1, 13, 45, 136)
\lnn(2, 1, 4, 156)
\lnn(3, 1235, 1456, 235)
\lnn(4, 123, 145, 236)
\lnn(6, 12345, 12456, 345)
\lnn(8, 123, 124, 356)

\end{tabular}\\
\\
\begin{tabular}{l}
(a) Labeling the regions of $\WDD(v,w)$ by $(A_r,B_r)$.
\end{tabular}
&
\begin{tabular}{l}
(b) $A_r,B_r,C_r$ for $r\in\Jo$.
\end{tabular}
\vspace{1cm}
\end{tabular}
}

\scalebox{0.9}{
\begin{tabular}{cc}

\begin{tikzpicture}[xscale=\xscl,yscale=\yscl,baseline=(Z.base)]
\draw[white] (0,3)--(-5,3);
\coordinate(Z) at (0,3.5);
\def\ledot#1(#2,#3){
\node[draw,circle,scale=\dotscl,inner sep=0pt,fill=white] (A) at (#2.5,#3.5) {$t_{#1}$};
}

\foreach \i/\rb in {1/2,2/8,3/6,4/6,5/12,6/10}{
\draw[line width=\wirelw] (\lbound,\i) -- (\rb,\i);
\node[scale=\bnscl,anchor=east] (A) at (\lbound,\i) {$\i$};
}
\draw[line width=\wirelw] (9,3) -- (14,3);
\draw[line width=\wirelw] (13,4) -- (14,4);

\def\ex{.4}

\def\whrect#1(#2,#3){
\fill[white] ($ (#2,#3) + (-0\ex,-0.1) $) rectangle ($ (#2,#3) + (1\ex,1.1) $);
}
\def\xda#1(#2,#3){
\draw[line width=\wirelw,rounded corners=\roc] ($ (#2,#3) + (-0\ex,1) $) --($ (#2,#3) + (0,1) $)--($ (#2,#3) + (0.5,0.5) $);
}
\def\xdb#1(#2,#3){
\draw[line width=\wirelw,rounded corners=\roc] ($ (#2,#3) + (-0\ex,0) $) --($ (#2,#3) + (0,0) $)--($ (#2,#3) + (0.5,0.5) $);
}
\def\xdc#1(#2,#3){
\draw[line width=\wirelw,rounded corners=\roc] ($ (#2,#3) + (1\ex,1) $) --($ (#2,#3) + (1,1) $)--($ (#2,#3) + (0.5,0.5) $);
}
\def\xdd#1(#2,#3){
\draw[line width=\wirelw,rounded corners=\roc] ($ (#2,#3) + (1\ex,0) $)--($ (#2,#3) + (1,0) $)--($ (#2,#3) + (0.5,0.5) $);
}

\def\xxd#1(#2,#3){
\xda#1(#2,#3)
\xdb#1(#2,#3)
\xdc#1(#2,#3)
\xdd#1(#2,#3)
}

\whrect1(0,2)
\whrect2(2,1)
\whrect3(4,4)
\whrect4(6,3)
\whrect6(10,5)
\whrect8(14,3)
\whrect5(8,2)
\whrect7(12,4)

\xxd1(0,2)
\xda2(2,1)\xdb2(2,1)\xdc2(2,1)
\xxd3(4,4)
\xda4(6,3)\xdb4(6,3)
\xdb5(8,2)\xdc5(8,2)
\xda6(10,5)\xdb6(10,5)\xdd6(10,5)
\xda7(12,4)\xdd7(12,4)
\xda8(14,3)\xdb8(14,3)

\ledot1(0,2)
\ledot2(2,1)
\ledot3(4,4)
\ledot4(6,3)
\ledot6(10,5)
\ledot8(14,3)

\end{tikzpicture}
&
\begin{tikzpicture}[xscale=\xscl,yscale=\yscl,baseline=(Z.base)]
\coordinate(Z) at (0,3.5);

\foreach \i/\rb in {1/2,2/8,3/6,4/6,5/12,6/10}{
\draw[line width=\wirelw,opacity=\leop] (\lbound,\i) -- (\rb,\i);
\node[scale=\bnscl,anchor=east,opacity=\leop] (A) at (\lbound,\i) {$\i$};
}
\draw[line width=\wirelw,opacity=\leop] (9,3) -- (14,3);
\draw[line width=\wirelw,opacity=\leop] (13,4) -- (14,4);

\def\ex{.4}

\def\whrect#1(#2,#3){
\fill[white] ($ (#2,#3) + (-0\ex,-0.1) $) rectangle ($ (#2,#3) + (1\ex,1.1) $);
}
\def\xda#1(#2,#3){
\draw[line width=\wirelw,rounded corners=\roc,opacity=\leop] ($ (#2,#3) + (-0\ex,1) $) --($ (#2,#3) + (0,1) $)--($ (#2,#3) + (0.5,0.5) $);
}
\def\xdb#1(#2,#3){
\draw[line width=\wirelw,rounded corners=\roc,opacity=\leop] ($ (#2,#3) + (-0\ex,0) $) --($ (#2,#3) + (0,0) $)--($ (#2,#3) + (0.5,0.5) $);
}
\def\xdc#1(#2,#3){
\draw[line width=\wirelw,rounded corners=\roc,opacity=\leop] ($ (#2,#3) + (1\ex,1) $) --($ (#2,#3) + (1,1) $)--($ (#2,#3) + (0.5,0.5) $);
}
\def\xdd#1(#2,#3){
\draw[line width=\wirelw,rounded corners=\roc,opacity=\leop] ($ (#2,#3) + (1\ex,0) $)--($ (#2,#3) + (1,0) $)--($ (#2,#3) + (0.5,0.5) $);
}

\def\xxd#1(#2,#3){
\xda#1(#2,#3)
\xdb#1(#2,#3)
\xdc#1(#2,#3)
\xdd#1(#2,#3)
}

\whrect1(0,2)
\whrect2(2,1)
\whrect3(4,4)
\whrect4(6,3)
\whrect6(10,5)
\whrect8(14,3)
\whrect5(8,2)
\whrect7(12,4)

\xxd1(0,2)
\xda2(2,1)\xdb2(2,1)\xdc2(2,1)
\xxd3(4,4)
\xda4(6,3)\xdb4(6,3)
\xdb5(8,2)\xdc5(8,2)
\xda6(10,5)\xdb6(10,5)\xdd6(10,5)
\xda7(12,4)\xdd7(12,4)
\xda8(14,3)\xdb8(14,3)

\def\ledot#1(#2,#3){
\node[draw=white,circle,scale=\dotscl,inner sep=0pt,fill=white] (A) at (#2.5,#3.5) {$\textcolor{white}{t_{#1}}$};
\node[opacity=\leop,draw,circle,scale=\dotscl,inner sep=0pt,fill=white] (A) at (#2.5,#3.5) {$t_{#1}$};
}

\ledot1(0,2)
\ledot2(2,1)
\ledot3(4,4)
\ledot4(6,3)
\ledot6(10,5)
\ledot8(14,3)

\begin{scope}[yshift=3pt]
\draw[rounded corners=\roc,line width=\strandlw,red,->] (\lbound,5) -- (12,5)--(13,4)--(15.3,4);
\draw[rounded corners=\roc,line width=\strandlw,red,->] (15.3,4)--(15,4)--(14,3)--(9,3)--(8,2)--(3,2)--(2,1)--(\lbound,1);
\end{scope}
\begin{scope}[yshift=-3pt]
\draw[rounded corners=\roc,line width=\strandlw,blue,->] (\lbound,3) -- (7.3,3);
\draw[rounded corners=\roc,line width=\strandlw,blue,->] (7.3,3) -- (7,3)--(6,4)--(5,4)--(4,5)--(\lbound,5);
\end{scope}

\end{tikzpicture}\\

\\
\begin{tabular}{l}
(c) The graph $\WDLE(v,w)$.
\end{tabular}
&
\begin{tabular}{l}
(d) Strands $\pp_4$ (red)  and $\pp_2$ (blue) in $\WDLE(v,w)$.
\end{tabular}
\end{tabular}
}

\caption{\label{fig:chambers} Constructing the graphs $\WDD(v,w)$ and $\WDLE(v,w)$ from the proof of \cref{lem:rotation}. Here $k=3$, $n=6$, and $(v,w)$ is as in \cref{ex:Le_big}.}
\end{figure}

Let us now introduce a certain planar graph $\WDLE(v,w)$ drawn in a disk, with boundary vertices labeled by $1,\dots,n$ and interior vertices labeled by $t_r$ for $r\in\Jo$. The same construction appears in~\cite[Figure~5]{Karpman2}. The graph $\WDLE(v,w)$ is obtained from $\WDD(v,w)$ by removing the \emph{redundant part} of each wire, that is, the part to the right of the rightmost dot that is placed on an intersection involving this wire; see \cref{fig:chambers}(c). (The redundant part of each wire is common to $\WD(v)$ and $\WD(w)$.)  

Observe that there is a simple isomorphism between the graphs $G(D)$ and $\WDLE(v,w)$ that preserves the labels of the vertices (i.e., $1,\dots, n$ for boundary vertices and $\{t_r\}_{r\in \Jo}$ for interior vertices). This isomorphism can be obtained by reflecting $G(D)$ along the line $y=2x$ in the $xy$-plane. For example, compare \cref{fig:chambers}(c) with \cref{fig:Le_quiver}(left). 

For each $a\in [n]$, we introduce a path $\pp_a$ in $\WDLE(v,w)$ that starts at $v^{-1}(a)$ on the left and ends at $w^{-1}(a)$ on the left. First, consider a path $\pp'_a$ in $\WDD(v,w)$ that starts at $v^{-1}(a)$ on the left, goes right following the strands of $\WD(v)$ (i.e., ignores all intersections that have dots on them) until it reaches $a$ on the right, and then goes left following the strands of $\WD(w)$ until it reaches $w^{-1}(a)$ on the left.  The path $\pp'_a$ in $\WDD(v,w)$ travels right and then left along the redundant part of the wire whose right endpoint is $a$. We define $\pp_a$ to be the path in $\WDLE(v,w)$ obtained from $\pp'_a$ by removing this redundant part. See \cref{fig:chambers}(d) for an example.  Comparing the ``rules of the road''~\eqref{eq:rules_road} with the definition of the paths $\pp_a$ in $\WDLE(v,w)$, we find that for each $a\in[n]$, our graph isomorphism $G(D)\cong \WDLE(v,w)$ sends the path $p_a$ in $G(D)$ to the path $\pp_{v(a)}$ in $\WDLE(v,w)$. For example, compare \cref{fig:chambers}(d) with \cref{fig:faces_Le}.

Note that for each $r\in\Jo$, the chamber $R_r$ of $\WDD(v,w)$ is contained inside a unique chamber (also denoted $R_r$) of $\WDLE(v,w)$. We claim that for each $a\in[n]$ and $r\in \Jo$, 
\begin{equation}\label{eq:C_r_R_r}
\text{$a$ belongs to $C_r$ if and only if the chamber $R_r$ is to the \emph{left} of the path $\pp_a$.}
\end{equation}
To show this, suppose first that $a\leq k$. Then 
\[a\in C_r\Longleftrightarrow a\in A_r\setminus [1,a_r] \Longleftrightarrow a\in A_r\setminus B_r\qquad \text{(for $a\leq k$)}.\]
On the other hand, $a\leq k$ implies $v^{-1}(a)\leq w^{-1}(a)$, so $a$ belongs to $A_r\setminus B_r$ if and only if the wire labeled $a$ in $\WD(v)$ (resp., in $\WD(w)$) is below (resp., above) the chamber $R_r$, which is equivalent to $R_r$ being to the left of the path $\pp_a$. 

Suppose now that $a\geq k+1$. Then 
\[a\in C_r\Longleftrightarrow a\in A_r\sqcup [b_r+1,n] \Longleftrightarrow \text{$a\in (A_r\cap B_r)$ or $a\notin (A_r\cup B_r)$} \qquad \text{(for $a\geq k+1$)}.\]
On the other hand, $a\geq k+1$ implies $v^{-1}(a)\geq w^{-1}(a)$, so $a$ belongs to $A_r\cap B_r$ if and only if the chamber $R_r$ is above both wires of $\pp'_a$, in which case $R_r$ is to the left of $\pp_a$. The only other case when $R_r$ is to the left of $\pp_a$ is when $R_r$ is below both wires of $\pp'_a$, and this corresponds precisely to $a\notin (A_r\cup B_r)$. This shows~\eqref{eq:C_r_R_r}. Combining~\eqref{eq:C_r_R_r} with the rule for face labels in \cref{sec:face-labels}, we obtain a proof of~\eqref{eq:C_r_F_r}.

We now deduce~\eqref{eq:f'_r_F_r} from~\eqref{eq:C_r_F_r}. For $g\in N_{v,w}$, the right hand side of~\eqref{eq:f'_r_F_r} is given by $\Delta_{vC_r,[k+1,n]}(\dv g)$, which clearly equals $f_r(g)=\Delta_{C_r,[k+1,n]}(g)$ up to sign.  %
To see that the sign is correct, observe that since $C_r$ is a face label of a Le-diagram (as we have shown above), it satisfies the following property:
\begin{equation}\label{eq:strands_property}
  \text{for all $i<j$ such that $v(i)>v(j)$ and $j\in C_r$, we have $i\in C_r$.}
\end{equation}
If~\eqref{eq:strands_property} holds for $v$, it also holds for $(\Vi r)^{-1}=\sv_{i_{r+1}}\cdots \sv_{i_m}$ for each $0\leq r\leq m-1$. The desired statement $\Delta_{vC_r,[k+1,n]}(\dv g)=\Delta_{C_r,[k+1,n]}(g)$ now follows by induction on $r=m-1,m-2,\dots,0$ in a straightforward fashion.
\end{proof}

In view of \cref{cor:irred},  Leclerc's result~\cite[Theorem~4.5]{Lec} implies in the case $(v,w)\in Q^J$  that the map $x_r\mapsto \vfbar_r$ extends to an injective ring homomorphism $\AQT\hookrightarrow \C[\PR_v^w]$. He conjectured that this map is actually an isomorphism. Thus \cref{thm:main} confirms his conjecture in the case $(v,w)\in Q^J$.

Combining Proposition \ref{prop:Lecagree} and Lemma \ref{lem:rotation}, we have the following result.
\begin{corollary}\label{cor:same_Lec}
Let $(v,w)\in Q^J$ and assume $\Delta_{F_0} = 1$ on $\PR_v^w$.
\begin{theoremlist}
\item\label{cor:same} The cluster structure of~\cite{Lec} coincides with that of \cref{thm:main}.
\item\label{cor:Lec} We have an injection $\ninj:\AQD \hookrightarrow \C[\PR_v^w]$ sending $x_{F_r}$ to $\Delta_{F_r}$ for all $r\in\Jo$.
\end{theoremlist}
\end{corollary}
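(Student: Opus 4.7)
The plan is essentially to glue together the two immediately preceding results: the quiver isomorphism of \cref{prop:Lecagree} and the identification of Leclerc's functions with Plücker coordinates in \cref{lem:rotation}. Both parts of the corollary follow by a diagram chase once these are in hand, so the writeup should be short.

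First I would invoke Leclerc's theorem~\cite[Theorem~4.5]{Lec}: applied to $(v,w) \in Q^J$ and using \cref{cor:irred} to guarantee that each $f_r$ is already irreducible (so the vertices of $\tQ$ may indeed be indexed by $\Jo$ rather than by irreducible factors), it yields an injective ring homomorphism $\AQT \hookrightarrow \C[\dv N_{v,w}] \simeq \C[\PR_v^w]$ sending the initial cluster variable $x_r$ to $\vfbar_r$ for each $r \in \Jo$. This gives Leclerc's cluster structure.

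Next I would apply \cref{prop:Lecagree}: the assignment $r \mapsto F_r$ is an isomorphism of ice quivers $\tQ \xrasim \QD$ that preserves the frozen/mutable partition. Consequently, it induces a canonical isomorphism of abstract cluster algebras $\AQT \xrasim \AQD$ sending $x_r \mapsto x_{F_r}$. Composing the inverse of this isomorphism with Leclerc's injection produces a ring map $\AQD \hookrightarrow \C[\PR_v^w]$ whose value on $x_{F_r}$ is $\vfbar_r$.

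Finally I would invoke \cref{lem:rotation}, which (after the gauge-fixing $\Delta_{F_0} = 1$) gives $\vfbar_r = \Delta_{F_r}$ on $\PR_v^w$ for every $r \in \Jo$. Therefore the map constructed above is precisely $\ninj$, proving~\itemref{cor:Lec}. For~\itemref{cor:same}, note that the image of this map is, by construction, simultaneously (a) Leclerc's cluster subalgebra of $\C[\PR_v^w]$ and (b) the cluster subalgebra appearing in \cref{thm:main}; hence the two cluster structures on $\C[\PR_v^w]$ coincide. No step here is a real obstacle: the substantive content is entirely in \cref{prop:Lecagree,lem:rotation,cor:irred}, and the only thing to verify is that the identifications on initial seeds are compatible, which is immediate from the definitions.
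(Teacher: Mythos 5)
Your proposal matches the paper's argument exactly: the paper also obtains the corollary by combining Leclerc's Theorem 4.5 (made applicable via \cref{cor:irred}) with the quiver isomorphism of \cref{prop:Lecagree} and the identification $\vfbar_r=\Delta_{F_r}$ from \cref{lem:rotation}. Nothing essential is missing, and the identification of the initial seeds is indeed the only compatibility one needs to check.
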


\def\gt{\gbf_{\bv,\bw}(\bt)}

\section{Surjectivity}\label{sec:surj}
In view of \cref{cor:Lec}, in order to complete the proof of Theorem \ref{thm:main}, it suffices to show that the map $\ninj:\AQD \hookrightarrow \C[\PR_v^w]$ is surjective.

\subsection{Paths in Le-diagrams}\label{sec:paths}

The space $\Rich_v^w$ contains a distinguished torus of the same dimension, called the \emph{open Deodhar stratum}.  We describe a parametrization of this torus following \cite{MR}.

For $\bt=(t_r)_{r\in\Jo} \in (\C^\ast)^{|\Jo|}$, define an element 
\begin{equation}\label{eq:MR}
\gt  =  g_1\cdots g_m \in N\dv\cap B_-wB_-
\qquad \text{where} \qquad
g_r = \begin{cases} \ds_{i_r} & \mbox{if $r \notin \Jo$,} \\
x_{i_r}(t_r) &\mbox{if $r \in \Jo$.} \end{cases}
\end{equation}
The map $(\C^\ast)^{\Jo} \to \Rich_v^w$ given by $\t \mapsto \gt B_-$ is an isomorphism onto its image, the open Deodhar stratum in $\Rich_v^w$.

\def\mymat#1#2{
\scalebox{0.8}{$\left(
\begin{array}{ccc}
#1
\end{array}
\right)#2
$}
}

\def\mymatt#1#2{
\scalebox{0.8}{$\left(
\begin{array}{ccccc}
#1
\end{array}
\right)#2
$}
}

\def\myeq#1#2#3{
$#1=$\mymat{#2}{#3}
}

Let $\X^w = (B_- \dw B_-)/B_-$ be a \emph{Schubert cell} inside $G/B_-$. We have an isomorphism ${N_-\dw\cap \dw N}\xrasim \X^w$ sending $g\mapsto gB_-$. Let $\phi_w:\X^w\to N_-\dw\cap \dw N$ denote the inverse of this isomorphism.  Since $\Rich_v^w \subset \X^w$, for each $\bt\in (\C^\ast)^{\Jo}$, we have a unique $h := \phi_w(\gt B_-) \in N_- \dw \cap \dw N$ satisfying $hB_- = \gt B_-$. When $(v,w)\in Q^J$, computing the matrix $h$ amounts to computing the column-echelon form of $\gt \PJ\in \Gr(n-k,n)$. Our goal is to describe the entries of $h$ in terms of the variables $\bt$. The answer essentially coincides with the \emph{boundary measurement map} of~\cite[Definition~4.7]{Pos}.

\def\Meas{\operatorname{Meas}}
\let\wtt\wt
\def\wt(#1){\wtt_{#1}(\bt)}
\def\eps(i,j){\epsilon_{i,j}}
\def\inv(i,j){{\operatorname{inv}_{i,j}}}
\def\Gvec{\vec G}
Let $\Gvec(D)$ be obtained from $G(D)$ by orienting every vertical edge down and every horizontal edge left. 
Suppose that $i\in w[k+1,n]$ (resp., $j\in w[k]$) labels a horizontal (resp., vertical) boundary edge of $\la$. For $r\in\Jo$ and a directed path $P$ in $\Gvec(D)$, we write $r\in P$ if $P$ passes through the vertex labeled $t_r$, and let $\wt(P):=\prod_{r\in P} t_r^{-1}$. Denote $\Meas_{i,j}(\bt):=\sum_P \wt(P)$, where the sum is taken over all directed paths in $\Gvec(D)$ connecting $i$ to $j$. Finally, for $i,j\in[n]$, set $\inv(i,j):=\#\{j'>j:w(j')<i\}$, so that when $i=w(j)$, the $(i,j)$-th entry of $\dw$ equals $(-1)^{\inv(i,j)}$. The following result can be deduced from~\cite[Theorem~5.10]{TW}. We include a proof here for completeness.

\def\hr{|h]}
\def\hp{h'}
\def\gtp{\gbf'_{\bv',\bw'}(\bt)}

\def\u{y}
\def\DD{d}

\begin{proposition}\label{prop:Lepath}
Let $h = \phi_w(\gt B_-)$, where $\gt $ is as in \eqref{eq:MR}.  For $i\in w[k]$ and $j\in [k+1,n]$, the $(i,j)$-th entry of $h$ equals $(-1)^{\inv(i,j)}\Meas_{i,w(j)}(\bt)$.
\end{proposition}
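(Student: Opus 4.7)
The plan is to prove the identity by induction on $m = \ell(w)$, peeling off the leftmost generator $g_1$. The base case $m=0$ is immediate: then $h = \id$ and both sides vanish. For the inductive step, set $w' := s_{i_1} w$ and $v' := \sv_{i_1} v$; since the reduced word $(i_2, \ldots, i_m)$ still ends in $s_k$, we have $w' \in W^J$ and $(v', w') \in Q^J$, with Le-diagram $D' := D(v', w')$ corresponding to the Young diagram $\la'$ obtained from $\la$ by deleting the southeast-most box. Setting $g' := g_2 g_3 \cdots g_m$ and $h' := \phi_{w'}(g' B_-)$, the inductive hypothesis gives the entries of $h'$ as path sums in $\Gvec(D')$.

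To recover $h$ from $h'$, write $h' = h'_- \dot{w'}$ with $h'_- \in N_-$. Using $\dw = \ds_{i_1} \dot{w'}$, we have $\gt B_- = g_1 h'_- \dot{w'} B_-$, and the unique element $h \in N_- \dw \cap \dw N$ in this coset is extracted from $g_1 h'_-$ by an LU-type factorization $g_1 h'_- = h_- u_+$ with $h_- \in N_-$ and $u_+$ upper-triangular, after which $u_+$ is absorbed into the $B_-$-coset. Because $g_1$ differs from the identity only in positions $i_1, i_1+1$, this factorization affects only rows $i_1$ and $i_1+1$; all other rows of $h$ coincide with the corresponding rows of $h'$ pushed through $\ds_{i_1}$.

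Two cases then arise. If $1 \in \Jo$, then $g_1 = x_{i_1}(t_1)$ adds $t_1$ times row $i_1+1$ to row $i_1$ of $h'_-$, and $\Gvec(D)$ is obtained from $\Gvec(D')$ by attaching a new dot $t_1$ (with its south and east hooks) at the newly added box. I would verify that the paths in $\Gvec(D)$ passing through $t_1$ (which carry an extra factor $t_1^{-1}$) account precisely for the added term $t_1 h'_{i_1+1, j}$ in the updated row-$i_1$ entries. If $1 \notin \Jo$, then $g_1 = \ds_{i_1}$ swaps rows $i_1, i_1+1$ with a sign, and the graphs $\Gvec(D), \Gvec(D')$ have the same dots and internal edges, differing only in the boundary labels (the roles of $i_1$ and $i_1+1$ between south and east boundary get swapped), which matches the row swap on the matrix side.

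The main obstacle is the sign bookkeeping for $(-1)^{\inv(i,j)}$. The inversion counts $\operatorname{inv}_w(i,j)$ and $\operatorname{inv}_{w'}(i,j)$ differ by a quantity depending on the relative order of $\{i_1, i_1+1\}$ with respect to $i$ and $j$; in Case $1 \notin \Jo$, this difference must combine correctly with the extra sign from $\ds_{i_1}$, and in Case $1 \in \Jo$ it must cancel trivially. A secondary technicality, in Case $1 \in \Jo$, is confirming that the LU factorization of $x_{i_1}(t_1) h'_-$ really touches only rows $i_1$ and $i_1+1$; this follows by a direct matrix computation since $x_{i_1}(t_1) - \id$ has a single nonzero entry at $(i_1, i_1+1)$.
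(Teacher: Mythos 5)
Your overall strategy (induction on $\ell(w)$, peeling off the leftmost factor $g_1$, with the two cases $1\notin\Jo$ and $1\in\Jo$) is the same as the paper's, and the case $g_1=\ds_{i_1}$ is essentially fine. But the inductive step in the crucial case $1\in\Jo$, $g_1=x_{i_1}(t_1)$, has a genuine gap. Writing $\hp=h'_-\dot w'$ and factoring $x_{i_1}(t_1)h'_-=h_-u_+$, one gets $u_+=x_{i_1}(t_1)$ (conjugate the unipotent through, using $(h'_-)_{i_1+1,i_1}=0$), and this factor \emph{cannot} be ``absorbed into the $B_-$-coset'': since $\bw$ is reduced we have $s_{i_1}w'>w'$, so $(w')^{-1}\alpha_{i_1}$ is a positive root and $\dot w'^{-1}x_{i_1}(t_1)\dot w'\in N$, not $B_-$. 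Indeed, if the absorption were legitimate the resulting representative $h_-\dot w'$ would lie in the Schubert cell of $w'$ rather than $w$, and $h$ would not depend on $t_1$ at all, which is absurd. Relatedly, your claim that the passage from $\hp$ to $h$ ``affects only rows $i_1$ and $i_1+1$'', with the update being the added term $t_1\hp_{i_1+1,j}$ in row $i_1$, is false and is also inconsistent with the statement you are proving: the path weights are $\prod_{r\in P}t_r^{-1}$, so the new entries must carry $t_1^{-1}$, not $t_1$, and rows strictly below $i_1+1$ change as well. The paper's \cref{ex:h_paths} shows this explicitly: there $i_1=3$, yet row $5$ of $|h]$ acquires the terms $c+\frac{ae}{t_1}$, $d+\frac{be}{t_1}$, absent from $\hp$.

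The correct repair — and this is exactly what the paper does — is to renormalize on the \emph{right} by a lower-triangular matrix (column operations preserve the coset $gB_-$): one has $|h]=x_{i_1}(t_1)|\hp]\,\DD\,\u$, where $\DD$ rescales by $1/t_1$ the column whose pivot has moved up to row $i_1$, and the lower-unipotent $\u$ clears the entries of row $i_1$ at non-inversion positions. It is this column rescaling and clearing that propagates $t_1^{-1}$ into all rows weakly below $i_1$ and produces precisely the path sums through the new vertex $t_1$ of $\Gvec(D)$. Without this step your candidate matrix is not in $N_-\dw\cap\dw N$ and its entries do not match $\Meas_{i,w(j)}(\bt)$, so the induction does not close as written.
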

\begin{proof}
Because $w\in W^J$ and $h\in N_-\dw\cap \dw N$, the left $k$ columns of $h$ coincide with the left $k$ columns of $\dw$, so we are interested in the right $n-k$ columns of $h$, which contain the identity submatrix with row set $w[k+1,n]$. Let us denote by $|h]$ the submatrix of $h$ with column set $[k+1,n]$.

We proceed by induction on the length $m = \ell(w)$ of $w$.  The case $m = 0$ is clear: the matrix $|h]$ has 0-s in all entries except for the identity matrix in the rows $k+1,k+2,\ldots,n$.  Suppose the result is known for $(v',w')$, where $w' = s_i w < w$, $v' = \sv_i v\leq v$, and $i:=i_1$. Then $D=D(v,w)$ is obtained from $D':=D(v',w')$ by adding a box (which may or may not contain a dot), whose horizontal and vertical boundary edges are labeled by $i$ and $i+1$.  Let $\gtp = g_2 g_3 \cdots g_m$ and $\hp:=\phi_{w'}(\gtp B_-)$.  

Suppose that $g_1 = \ds_i$. Then $v'=s_iv<v$, $\gt  = \ds_i \gtp$, $h = \ds_i \hp$, and the extra box of $D$ does not contain a dot. The definition of a Le-diagram implies that either there are no paths involving $i$ or no paths involving $i+1$ in $D'$.  The paths in $D$ are thus in bijection with the paths in $D'$ with the roles of $i$ and $i+1$ swapped. This agrees with $h = \ds_i \hp$, and the signs of the entries change in accordance with $(-1)^{\inv(i,j)}$.

Suppose that $g_1 = x_i(t_1)$. Then $v'=v$, $\gt  = x_i(t_1) \gtp$, and we have $|h] =  x_i(t_1)|\hp] \DD\u$, where $\DD=\diag(d_{k+1},\dots,d_n)$ is an $(n-k)\times (n-k)$ diagonal matrix and $\u = (\u_{ab})_{k+1\leq a,b\leq n}$ is an $(n-k)\times (n-k)$ lower-triangular unipotent matrix given by
\[
d_a=
\begin{cases}
  1/t_1,&\text{if $a=w^{-1}(i)$,}\\
  1,&\text{otherwise;}
\end{cases}\quad 
\u_{ab} = \begin{cases} -\hp_{i,b} & \mbox{if $a= w^{-1}(i)$ and $k+1\leq b <a$},\\
1 & \mbox{if $a=b$}, \\
0 & \mbox{otherwise.}
\end{cases}
\]
Since $\DD\u$ is lower-triangular, we have $hB_- = \gt B_-$. Multiplying by $\DD\u$ on the right ``kills off'' all nonzero entries corresponding to non-inversions of $w$ and yields $h\in N_-\dw\cap \dw N$, thus $h = \phi_w(\gt )$. Note also that the extra box of $D$ contains a dot labeled by $t_1$. Thus the matrix entries of $h$ correspond again exactly to paths in $G(D)$, and the sign of each entry agrees with $(-1)^{\inv(i,j)}$.
\end{proof}

\begin{example}\label{ex:h_paths}
Let $(v',w') = (s_2,s_2s_1s_4s_3s_2)$. We find
\[\gtp=x_2(t_2)x_1(t_3)x_4(t_4)x_3(t_5)\ds_2=\smat{
1 & 0 & t_{3} & 0 & 0 \\
0 & -t_{2} & 1 & t_{2} t_{5} & 0 \\
0 & -1 & 0 & t_{5} & 0 \\
0 & 0 & 0 & 1 & t_{4} \\
0 & 0 & 0 & 0 & 1
},\quad \hp=\smat{
0 & 0 & 1 & 0 & 0 \\
0 & 0 & 0 & 1 & 0 \\
1 & 0 & -\frac{1}{t_{2} t_{3}} & \frac{1}{t_{2}} & 0 \\
0 & 0 & 0 & 0 & 1 \\
0 & -1 & \frac{1}{t_{2} t_{3} t_{4} t_{5}} & -\frac{1}{t_{2} t_{4} t_{5}} & \frac{1}{t_{4}}
}.\]

The matrices $\gtp$ and $\hp=\phi_{w'}\left(\gtp\PJ\right)$ represent the same element of $G/B_-$, which can be checked by comparing their right-justified \emph{flag minors}: for each $j\in[n]$, the linear span of the last $j$ columns of $\gtp$ equals that of $\hp$.

Let now $v=v'$, and $w=s_3w'$. Thus $\Gvec(D')$ and $\Gvec(D)$ are given by
\begin{equation}\label{eq:GD_GDp}
\begin{tikzpicture}[baseline=(ZZ.base)]
\coordinate(ZZ) at (0,0);
\def\Lescl{0.8}
\node(X) at (-1,0){
\scalebox{\Lescl}{
\begin{tikzpicture}[baseline=(Z.base)]
  \coordinate (Z) at (0,0.5);
\def\nodescl{0.7}
\draw (0,0)-- (0,2)--(3,2)--(3,1)--(2,1)--(2,0)--(0,0);
\node[anchor=north] (A) at (0.5,0) {\scalebox{\nodescl}{$1$}};
\node[anchor=north] (A) at (1.5,0) {\scalebox{\nodescl}{$2$}};
\node[anchor=north] (A) at (2.5,1) {\scalebox{\nodescl}{$4$}};
\node[anchor=west] (A) at (2,0.5) {\scalebox{\nodescl}{$3$}};
\node[anchor=west] (A) at (3,1.5) {\scalebox{\nodescl}{$5$}};
\node[draw,circle,scale=0.8,inner sep=0pt,fill=white] (A3) at (0.5,0.5) {$t_3$};
\node[draw,circle,scale=0.8,inner sep=0pt,fill=white] (A2) at (1.5,0.5) {$t_2$};
\node[draw,circle,scale=0.8,inner sep=0pt,fill=white] (A4) at (2.5,1.5) {$t_4$};
\node[draw,circle,scale=0.8,inner sep=0pt,fill=white] (A5) at (1.5,1.5) {$t_5$};
\draw[line width=0.8pt,->,>=stealth] (2,0.5)--(A2);
\draw[line width=0.8pt,->,>=stealth] (3,1.5)--(A4);
\draw[line width=0.8pt,->,>=stealth] (A2)--(1.5,0);
\draw[line width=0.8pt,->,>=stealth] (A3)--(0.5,0);
\draw[line width=0.8pt,->,>=stealth] (A4)--(2.5,1);
\draw[line width=0.8pt,->,>=stealth] (A2)--(A3);
\draw[line width=0.8pt,->,>=stealth] (A5)--(A2);
\draw[line width=0.8pt,->,>=stealth] (A4)--(A5);
\end{tikzpicture},
}
};
\node[anchor=east,scale=1] (XX) at (X.west) {$\Gvec(D')=$}; 

\node(R) at (5,0){
\scalebox{\Lescl}{
\begin{tikzpicture}[baseline=(Z.base)]
  \coordinate (Z) at (0,0.5);
\def\nodescl{0.7}
\draw (0,0)-- (3,0)--(3,2)--(0,2)--cycle;
\node[anchor=north] (A) at (0.5,0) {\scalebox{\nodescl}{$1$}};
\node[anchor=north] (A) at (1.5,0) {\scalebox{\nodescl}{$2$}};
\node[anchor=north] (A) at (2.5,0) {\scalebox{\nodescl}{$3$}};
\node[anchor=west] (A) at (3,0.5) {\scalebox{\nodescl}{$4$}};
\node[anchor=west] (A) at (3,1.5) {\scalebox{\nodescl}{$5$}};
\node[draw,circle,scale=0.8,inner sep=0pt,fill=white] (A1) at (2.5,0.5) {$t_1$};
\node[draw,circle,scale=0.8,inner sep=0pt,fill=white] (A3) at (0.5,0.5) {$t_3$};
\node[draw,circle,scale=0.8,inner sep=0pt,fill=white] (A2) at (1.5,0.5) {$t_2$};
\node[draw,circle,scale=0.8,inner sep=0pt,fill=white] (A4) at (2.5,1.5) {$t_4$};
\node[draw,circle,scale=0.8,inner sep=0pt,fill=white] (A5) at (1.5,1.5) {$t_5$};
\draw[line width=0.8pt,->,>=stealth] (3,0.5)--(A1);
\draw[line width=0.8pt,->,>=stealth] (3,1.5)--(A4);
\draw[line width=0.8pt,->,>=stealth] (A2)--(1.5,0);
\draw[line width=0.8pt,->,>=stealth] (A3)--(0.5,0);
\draw[line width=0.8pt,->,>=stealth] (A1)--(2.5,0);
\draw[line width=0.8pt,->,>=stealth] (A2)--(A3);
\draw[line width=0.8pt,->,>=stealth] (A5)--(A2);
\draw[line width=0.8pt,->,>=stealth] (A4)--(A5);
\draw[line width=0.8pt,->,>=stealth] (A1)--(A2);
\draw[line width=0.8pt,->,>=stealth] (A4)--(A1);
\end{tikzpicture}.
}};
\node[anchor=east,scale=1] (RR) at (R.west) {$\Gvec(D)=$}; 
\end{tikzpicture}
\end{equation}

\noindent We see that the entries of $\hp$ are indeed expressed as sums over paths in $\Gvec(D')$.  Next, temporarily denoting the non-trivial entries of $\hp$ by $a,b,c,d,e$ (with $a:=\frac1{t_2t_3},\dots,e:=\frac1{t_4}$), the calculation of $h = \phi_w\left( x_3(t_1) \gtp \PJ\right)$ in the proof of Proposition \ref{prop:Lepath} proceeds as follows:

\[|\hp]=\smat{
1 & 0 & 0 \\
0 & 1 & 0 \\
-a & b & 0 \\
0 & 0 & 1 \\
c & -d & e
},\quad  x_3(t_1)|\hp]=\smat{
1 & 0 & 0 \\
0 & 1 & 0 \\
-a & b & t_1 \\
0 & 0 & 1 \\
c & -d & e},\quad x_3(t_1) |\hp] \DD=\smat{
1 & 0 & 0 \\
0 & 1 & 0 \\
-a & b & 1 \\
0 & 0 & \frac1{t_1} \\
c & -d & \frac{e}{t_1}},\]
\[  |h]=x_3(t_1) |\hp] \DD  \u=\smat{
1 & 0 & 0 \\
0 & 1 & 0 \\
0 & 0 & 1 \\
\frac a{t_1} & -\frac{b}{t_1} & \frac{1}{t_1} \\
c+\frac{ae}{t_1} & -\left(d+\frac{be}{t_1}\right) & \frac{e}{t_1}
}=\smat{
1 & 0 & 0 \\
0 & 1 & 0 \\
0 & 0 & 1 \\
\frac{1}{t_{1} t_{2} t_{3}} & -\frac{1}{t_{1} t_{2}} & \frac{1}{t_{1}} \\
\frac{1}{t_{2} t_{3} t_{4} t_{5}}+\frac{1}{t_{1} t_{2} t_{3} t_{4}} & -\left(\frac{1}{ t_{2} t_{4} t_{5}}+\frac{1}{t_{1} t_{2} t_{4}}\right) & \frac{1}{t_{1} t_{4}}
}.      \]
We indeed see that the entries of $h$ are given by sums over directed paths in $\Gvec(D)$.
\end{example}

\subsection{Muller--Speyer twist}\label{sec:twist}
\def\twist{{\tau}}
\def\twF_#1{q_{#1}}
\def\twFp_#1{q'_{#1}}
Fix $(v,w) \in Q^J$. Extending a construction of Marsh and Scott~\cite{MaSc} for the top-dimensional positroid variety,  Muller and Speyer \cite[Section~1.8]{MStwist} have defined a right twist isomorphism $\twist: \PR_v^w \xrasim \PR_v^w$. We shall not recall the definition here, however; see \cref{ex:twist} below. For $r\in\Jo$, we let $\twF_r:=\Delta_{F_r}\circ\twist \in \C[\PR_v^w]$ denote the \emph{twisted minor} indexed by the corresponding face label of $G(D)$.

\begin{proposition}\label{prop:MS}
Let $r\in \Jo$, and suppose that the faces of $G(D)$ adjacent to $t_r$ are labeled by $F_a,F_b,F_c,F_r$ as in \cref{fig:neigh}. Let $x:=\gt $ be given by~\eqref{eq:MR}.   Then with $\Delta_{F_0}=1$,
\begin{equation}\label{eq:four}
t_r = \frac{\twF_a(x)\twF_b(x)}{\twF_c(x)\twF_r(x)} \qquad \text{and} \qquad \twF_r(x) =  \prod_{r'}\frac{1}{t_{r'}},\end{equation}
where the product is taken over all $r'\in\Jo$ such that the vertex of $G(D)$ labeled $t_{r'}$ is northwest of~$F_r$.
\end{proposition}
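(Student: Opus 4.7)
The plan is to deduce both identities by identifying the Deodhar parametrization $\bt\mapsto\gt\PJ$ with a boundary measurement parametrization of $\PR_v^w$ attached to $G(D)$, and then invoking Muller--Speyer's formula for twisted Pl\"ucker coordinates on the resulting torus. The first step is to verify, using \cref{prop:Lepath}, that $\bt\mapsto\gt\PJ$ is (up to the gauge $\Delta_{F_0}=1$) the standard source-labeled boundary measurement map associated to $G(D)$. The entries of the column-echelon form $h=\phi_w(\gt B_-)$ are sums of path weights $\prod_{r\in P}t_r^{-1}$ along directed paths in $\Gvec(D)$, and the Lindström--Gessel--Viennot lemma then presents each Pl\"ucker coordinate $\Delta_F(\gt\PJ)$ as a signed sum over non-crossing path families, with $t_r$ playing the role of the plabic graph weight at the dot $t_r$.

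Second, I would invoke the central result of~\cite{MStwist}: the right twist sends the boundary measurement parametrization to its ``monomial dual'', in the sense that for each face $F$ of $G(D)$ the pullback of $\Delta_F\circ\twist$ is a Laurent monomial in $\bt$, with exponents read off from the graph by a specific combinatorial recipe. For $F=F_r$ the recipe says the exponent of $t_{r'}$ equals $-1$ exactly when $t_{r'}$ lies strictly northwest of $F_r$ in the Le-diagram, and $0$ otherwise. Applied to $x=\gt$ this gives the second identity $\twF_r(x)=\prod_{r'}1/t_{r'}$.

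Third, I would deduce the first identity purely combinatorially from the second. Consider any $r''\in\Jo$ with $r''\ne r$: since the four faces $F_a,F_b,F_c,F_r$ meet only at the single vertex $t_r$ (and hence differ only in their relationship to the point $t_r$ itself), the condition of being strictly northwest of these four faces is the same for $t_{r''}$, so its contributions to $\twF_a\twF_b$ and to $\twF_c\twF_r$ cancel. The only unbalanced contribution comes from $r''=r$ itself: $t_r$ is northwest of $F_r$ (indeed its NW corner) but not of any of $F_a,F_b,F_c$, giving a single factor of $t_r$ in the ratio $\twF_a\twF_b/(\twF_c\twF_r)$.

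The main obstacle is the bookkeeping in the first step: matching the conventions of \cite{MStwist} (typically phrased for bipartite plabic graphs with face weights) against the MR/Deodhar conventions (weights at the dots of $G(D)$, with the inversion signs $(-1)^{\inv(i,j)}$ of \cref{prop:Lepath}) and against the source-labeling of strands in \cref{sec:face-labels}. Once a clean dictionary between the $t_r$, the edge/face weights of $G(D)$, and Postnikov's boundary measurements is in place, and the signs have been tracked, the applications of the Muller--Speyer monomial formula and the combinatorial cancellation above are essentially formal.
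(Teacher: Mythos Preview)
Your proposal is correct and follows essentially the same route as the paper: identify the Deodhar torus with a boundary measurement parametrization, apply the monomial twist formula of \cite{MStwist} to obtain $\twF_r(x)=\prod_{r'}t_{r'}^{-1}$, and then invert that monomial map to recover $t_r$. The paper resolves your ``main obstacle'' (matching the Deodhar and plabic conventions) not via \cref{prop:Lepath} but by an explicit local substitution turning each vertex $t_r$ of $G(D)$ into a black--white bipartite edge of weight $t_r$ in a reduced plabic graph $N(\bt)$; it then observes directly that the \emph{downstream wedge} of that edge in the sense of \cite{MStwist} consists of the faces southeast of $t_r$, which is exactly your ``northwest of $F_r$'' condition read contragrediently.
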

\begin{proof}
We associate to the vertex-labeled graph $G(D)$ a planar, bipartite, edge-weighted graph $N(\t)$ via the following local substitution at each vertex $t_r$ of $G(D)$:
\begin{center}
\begin{tikzpicture}
\draw[dashed, line width=0.8pt] (0,1)--(0,0);
\draw[dashed, line width=0.8pt] (-1,0)--(0,0);
\draw[line width=0.8pt] (0,0)--(1,0);
\draw[line width=0.8pt] (0,0)--(0,-1);
\node[draw,circle,scale=0.8,inner sep=0pt,fill=white] (A) at (0,0) {$t_r$};
\draw[line width = 0.5pt,->] (2,0) -- (3,0);
\begin{scope}[shift={(5,0)}]
\node[draw,circle,scale=0.8,inner sep=0.5pt,fill=white] (A) at (-1/3,-1/3) {};
\node[draw,circle,scale=0.8,inner sep=0pt,fill=black] (B) at (1/3,1/3) {};
\draw[line width=0.8pt] (A) -- (B);
\draw[line width=0.8pt] (A)--(-1/3,-1);
\draw[line width=0.8pt,dashed] (A)--(-1,-1/3);
\draw[line width=0.8pt,dashed] (B)--(1/3,1);
\draw[line width=0.8pt] (B)--(1,1/3);
\filldraw[fill=white] (A) circle (0.1);
\filldraw[fill=black] (B) circle (0.1);
\node at (-0.14,0.14) {$t_r$};
\end{scope}
\end{tikzpicture}
\end{center}
Here the horizontal (resp., vertical) dashed edge is present in $N(\t)$ if and only if it is present in $G(D)$, and the weights of all horizontal and vertical edges in $N(\t)$ are set to $1$. We make the following observations concerning $N(\t)$:
\begin{enumerate}[label=(\alph*)]
\item $N(\t)$ is a \emph{reduced plabic graph} in the language of~\cite{Pos}. It satisfies the assumptions of~\cite[Section~3.1]{MStwist}.
\item The strands from \cite{Pos, MStwist} agree with the strands described in \cref{sec:face-labels}.
\item The point $\gt \PJ\in\Gr(n-k,n)$ is equal to the image of $N(\t)$ in $\Gr(n-k,n)$ under the \emph{boundary measurement map} (denoted ${\mathbb D}$ in \cite{MStwist}). This has been verified in e.g.~\cite{TW} or~\cite{Karpman2}, or can be easily checked using Le-diagram induction directly from the setup of \cite{MStwist}.
\item\label{item:downstream} The \emph{downstream wedge} (\cite[Section 5]{MStwist}) of an edge of weight $t_r$ in $N(\t)$ consists precisely of the faces $F_{r'}$ to the southeast of the vertex labeled $t_r$ in $G(D)$.
\end{enumerate}
The formula for $\twF_r(x)$ in~\eqref{eq:four} follows from~\ref{item:downstream} and~\cite[Proposition~5.5 and Theorem~7.1]{MStwist}: they denote this monomial transformation by $\overrightarrow{\mathbb M}$. The formula for $t_r$ in~\eqref{eq:four} is then obtained by expressing the values $\twF_a(x)$, $\twF_b(x)$, $\twF_c(x)$,  $\twF_r(x)$ in $\t$ using the above monomial transformation.
\end{proof}

We have the following relationship between the cluster structure and the \emph{totally nonnegative Grassmannian} studied in~\cite{Lus,Pos}.
\begin{corollary}\label{cor:tnn}
Assume $\Delta_{F_0}=1$ on $\PR_v^w$. Then the following subsets of $\PR_v^w$ coincide:
\begin{enumerate}
\item\label{PRtp1} the \emph{positroid cell} $\PRtp_v^w:= \left\{x\in\PR_v^w\mid \Delta_{I}(x)\in\R_{\geq0}\ \text{for all  $I\subset[n]$ of size $n-k$}\right\}$;
\item\label{PRtp2} the subset of $\PR_v^w$ where all cluster variables of $\AQD$ take positive real values.
\end{enumerate}
\end{corollary}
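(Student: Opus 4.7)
The plan is to prove the two inclusions of \cref{cor:tnn} separately, using the Muller--Speyer twist $\tau$ as the bridge between cluster positivity and total positivity.

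For the inclusion (1) $\subseteq$ (2): I would first check that for $x \in \PRtp_v^w$, the initial cluster $\{\Delta_{F_r}\}_{r\in\Jo}$ takes strictly positive real values at $x$. By Postnikov's parametrization of positroid cells (equivalently, using the boundary measurement map as in the proof of \cref{prop:MS}, or directly via \cref{prop:Lepath} combined with the Lindstr\"om--Gessel--Viennot lemma), any $x \in \PRtp_v^w$ can be written as $\gt \PJ$ for some $\bt \in \R_{>0}^{\Jo}$, and each face-label Pl\"ucker coordinate $\Delta_{F_r}(x)$ is then a positive-coefficient sum of monomials in the $t_r^{-1}$, hence strictly positive. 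The remainder is standard: the mutation formula~\eqref{eq:x_mut} is a ratio of positive expressions, so positivity of a cluster at $x$ forces positivity of all adjacent clusters, and by induction every cluster variable of $\AQD$ is positive at $x$.

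For the reverse inclusion (2) $\subseteq$ (1): suppose every cluster variable of $\AQD$ takes a positive real value at $x \in \PR_v^w$. Applied to the initial cluster this says $\Delta_{F_r}(x) > 0$ for all $r \in \Jo$. The plan is to pass to the twisted point $y := \tau^{-1}(x) \in \PR_v^w$, at which the twisted minors satisfy $\twF_r(y) = \Delta_{F_r}(\tau(y)) = \Delta_{F_r}(x) > 0$. Since no twisted minor vanishes at $y$, the point $y$ lies in the open Deodhar stratum, so $y = \gt$ for a unique $\bt \in (\C^\ast)^{\Jo}$. \Cref{prop:MS} then expresses each $t_r$ as a positive ratio of twisted minors, giving $\bt \in \R_{>0}^{\Jo}$, and hence $y \in \PRtp_v^w$ by the first direction. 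Finally, I would invoke the Muller--Speyer theorem~\cite{MStwist} that the right twist $\tau$ restricts to a self-homeomorphism of the totally nonnegative part, which yields $x = \tau(y) \in \PRtp_v^w$.

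The main obstacle is the second inclusion, for which the essential external input is the Muller--Speyer result that $\tau$ preserves total nonnegativity; without it, cluster positivity of $x$ does not easily imply positivity of the \emph{arbitrary} Pl\"ucker coordinates required by the definition of $\PRtp_v^w$. Everything else reduces to the elementary positivity of the mutation formula~\eqref{eq:x_mut} together with the formulas in \cref{prop:Lepath} and \cref{prop:MS}, which were set up precisely to make this translation transparent.
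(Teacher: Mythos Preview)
Your proof is correct and essentially the same as the paper's: both reduce set~(2) to $\{x\in\PR_v^w : \Delta_{F_r}(x)>0 \text{ for all } r\in\Jo\}$ via the subtraction-free mutation formula~\eqref{eq:x_mut}, and then identify this locus with $\PRtp_v^w$ using the Muller--Speyer twist together with the Postnikov/Marsh--Rietsch parametrization. The only difference is cosmetic: the paper compresses the second step into a single line (identifying the face-positivity locus directly with the image of the boundary measurement map for positive weights), whereas you unpack the $(2)\subseteq(1)$ inclusion by passing to $y=\tau^{-1}(x)$, landing in the Deodhar torus with positive parameters via \cref{prop:MS}, and returning via the fact that $\tau$ preserves $\PRtp_v^w$---an equivalent use of the same Muller--Speyer ingredient.
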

\begin{proof}
Since $\{\Delta_{F_r}\}_{r\in\Jo}$ is the image (under $\ninj$) of a single cluster of $\AQD$, and since the mutation rule~\eqref{eq:x_mut} is subtraction-free, the subset in~\eqref{PRtp2} equals $\left\{x\in\PR_v^w\mid \Delta_{F_r}(x)\in\R_{>0}\ \forall r\in \Jo\right\}$. But then applying the twist of~\cite{MStwist}, we see that this set coincides with the image of the boundary measurement map ${\mathbb D}$ applied to the graph $N(\bt)$ when $\bt$ takes values in $\R_{>0}^{\Jo}$, and this set coincides with $\PRtp_v^w$ by either~\cite{Pos} or~\cite[Section~12]{MR}.\end{proof}

\begin{remark}
For arbitrary $v\leq w\in W$, there exists a simple automorphism $\twist_{v,w}:\Rich_v^w\xrasim \Rich_v^w$ which gives a common generalization of the twist maps of~\cite{BFZ} (when $v=1$) and~\cite{MStwist} (when $(v,w)\in Q^J$), and shares many properties with them. For example, $\twist_{v,w}$ preserves the positive part $\Rtp_v^w$ of $\Rich_v^w$ and satisfies a generalization of the \emph{chamber ansatz}~\eqref{eq:four}. The map $\twist_{v,w}$ will be studied in a separate paper.
\end{remark}

\begin{example}\label{ex:twist}
Let $(v',w') = (s_2,s_2s_1s_4s_3s_2)$ and $x:=\gtp$ be  as in \cref{ex:h_paths}. Then $x$ and its twist $\twist(x)$ are represented by the following $n\times (n-k)$ matrices:
\begin{equation}\label{eq:x_twist}
|x]=\smat{
t_{3} & 0 & 0 \\
1 & t_{2} t_{5} & 0 \\
0 & t_{5} & 0 \\
0 & 1 & t_{4} \\
0 & 0 & 1
},\quad |\twist(x)]=\smat{
\frac{1}{t_{3}} & -\frac{1}{t_{2} t_{3} t_{5}} & \frac{1}{t_{2} t_{3} t_{4} t_{5}} \\
1 & 0 & 0 \\
0 & \frac{1}{t_{5}} & -\frac{1}{t_{4} t_{5}} \\
0 & 1 & 0 \\
0 & 0 & 1
}.
\end{equation}
For each $i\in[n]$, the $i$-th row of $|\twist(x)]$ is orthogonal to the $(i+1)$-th row of $|x]$, and has scalar product $1$ with the $i$-th row of $|x]$, in agreement with~\cite[Section~1.8]{MStwist}. For $G(D')$ as in~\eqref{eq:GD_GDp}, we have $F_0=\{2,4,5\}$, and we see that the matrix $|\twist(x)]$ above is gauge-fixed to have $\Delta_{F_0}=1$. The values of $\twF_r(x)=\Delta_{F_r}(\twist(x))$ for $r\in\Jo\sqcup\{0\}$ are given by:
\def\ln#1#2#3#4#5#6{$#1$&$#2$&$#3$&$#4$&$#5$&$#6$\\}
\begin{equation}\label{eq:twist_F_r_table}
\renewcommand*{\arraystretch}{1.2}
\begin{tabular}{c|c|c|c|c|c}
\ln{r}{0}{2}{3}{4}{5}\hline
\ln{F_r}{\{2,4,5\}}{\{1,2,5\}}{\{1,4,5\}}{\{2,3,4\}}{\{2,3,5\}}\hline
\ln{\twF_r(x)}{1}{\frac1{t_2t_3t_5}}{\frac1{t_3}}{\frac1{t_4t_5}}{\frac1{t_5}}
\end{tabular}
\end{equation}
This agrees with~\eqref{eq:four}.
\end{example}

\subsection{Proof of Theorem~\ref{thm:main}}
Our approach is similar to that of \cite{BFZ}, who gave an upper cluster algebra structure on double Bruhat cells.

\def\inj{\eta^{\parr \twist}}
Fix $(v,w) \in Q^J$.  Let $\inj:\AQD\hookrightarrow \C[\PR_v^w]$ be obtained by composing the map $\ninj$ from \cref{cor:Lec} with the twist isomorphism $\twist:\PR_v^w \xrasim \PR_v^w$. Explicitly, for $r\in\Jo$, $\inj$ sends $x_r\in\AQD$ to the element $\twF_r\in\C[\PR_v^w]$ from \cref{sec:twist}. 

\def\btw{{\mathbf{q}}}

By \cref{prop:MS}, this injection is induced by the invertible monomial transformation~\eqref{eq:four} between $\bt:=\{t_r\}_{r\in\Jo}$ and $\btw:=\{\twF_r\}_{r\in\Jo}$.  By \cref{sec:pos}, we have an injection $(\Cast)^{\Jo}\hookrightarrow \Rich_v^w$ sending $\bt\mapsto \gt B_-$, whose image is an open Zariski dense subset of $\Rich_v^w$. This gives rise to an injection 
\begin{equation}\label{eq:inj_T}
\C[\PR_v^w] \hookrightarrow \C[\bt^{\pm1}] = \C[\btw^{\pm1}],\quad\text{where \quad$\bt^{\pm1}:=\{t_r^{\pm1}\}_{r\in\Jo}$\quad and\quad $\btw^{\pm1}:=\{q_r^{\pm1}\}_{r\in\Jo}$.}
\end{equation}

\def\btwpr{\btw'_r}
\noindent For $r\in \Jo$, let $\twFp_r:=\inj(x_r')$ (where $x_r'$ is given in~\eqref{eq:x_mut}), and let $\btwpr:=\{\twF_{a}\}_{a\in\Jo\setminus\{r\}}\sqcup \{\twFp_r\}.$
\begin{lemma}\label{lem:mutate} For each $r\in\Jo$, we have an injection $\C[\PR_v^w] \hookrightarrow \C[(\btwpr)^{\pm1}]$. In other words, every element of $\C[\PR_v^w]$ can be written as a Laurent polynomial in the variables $\btwpr$.
\end{lemma}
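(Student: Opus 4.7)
Applying the ring homomorphism $\inj$ to the cluster exchange relation $x_r x'_r = \prod_{i\to r}x_i + \prod_{r\to j}x_j$ in $\AQD$ yields the identity
\[
\twF_r \cdot \twFp_r \;=\; M_1(\btw) + M_2(\btw)
\]
in $\C[\PR_v^w]$, where $M_1:=\prod_{i\to r} q_i$ and $M_2:=\prod_{r\to j} q_j$ are monomials in $\{q_a\}_{a\in \Jo\setminus\{r\}}$ (in particular, neither involves $q_r$). Under the existing injection~\eqref{eq:inj_T}, this identifies $\twFp_r$ with the Laurent polynomial $q_r^{-1}(M_1+M_2)\in\C[\btw^{\pm 1}]$. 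Formally substituting $q_r=(q'_r)^{-1}(M_1+M_2)$ turns the embedding into $\C[\PR_v^w]\hookrightarrow \C[\btwpr^{\pm 1}, (M_1+M_2)^{-1}]$, so the content of the lemma is that the factor $(M_1+M_2)^{-1}$ is actually unnecessary.

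My plan is to exhibit a \emph{second} dense open torus chart on $\PR_v^w$ whose coordinate ring is exactly $\C[\btwpr^{\pm1}]$; restriction to this chart will produce the desired injection directly (bypassing the need to analyze denominators in the substitution above). Following~\cite{MStwist,Mul}, the Muller--Speyer framework attaches to each reduced plabic graph a parametrizing torus inside $\PR_v^w$, and plabic-graph square moves relate different such parametrizations via cluster-algebra mutations. Starting from the plabic graph $N(\bt)$ built in the proof of \cref{prop:MS}, I would perform the square move at the mutable face $F_r$ to obtain a new reduced plabic graph $N'$ with edge parameters $\bt'$. The boundary measurement map for $N'$ gives an open dense torus $T'\subset \PR_v^w$ parametrized by $\bt'$, and \cref{prop:MS} applied to $N'$ expresses each twisted minor in $\btwpr$ as a Laurent monomial in $\bt'$. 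Since the resulting monomial transformation $\bt'\leftrightarrow \btwpr$ is invertible, we obtain $\C[T'] = \C[\bt'^{\pm1}] = \C[\btwpr^{\pm1}]$, and restriction $\C[\PR_v^w]\hookrightarrow \C[T']$ finishes the proof.

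The main obstacle is matching the algebraic cluster mutation $\twF_r\leftrightarrow \twFp_r$ with the geometric square move at the face $F_r$. Concretely, one must verify that the boundary measurement of the mutated face in $N'$, computed via the Muller--Speyer twist, coincides with $\twFp_r = q_r^{-1}(M_1+M_2)$ as rational functions on $\PR_v^w$; the square-move transformation rule for edge weights, combined with the downstream-wedge formula~\eqref{eq:four}, reduces this to a local bookkeeping at the four corners of the mutated face. An alternative, slightly more abstract route would be to invoke the BFZ-style argument~\cite{BFZ} that if two adjacent cluster charts are both birational to a dense torus in a smooth affine variety, then regular functions lie in the Laurent ring for each; this requires separately checking smoothness of $\PR_v^w$ (known) together with the dominance of the mutated parametrization.
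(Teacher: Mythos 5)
Your overall strategy---produce a torus chart of $\PR_v^w$ whose coordinate ring is exactly $\C[(\btwpr)^{\pm1}]$ and then restrict---would indeed suffice, but the way you propose to produce it does not exist in general, and this is where the real content of the lemma lies. The cluster mutation at $F_r$ corresponds to a plabic-graph square move only when $F_r$ is a quadrilateral face of $N(\bt)$, i.e.\ when the vertex $F_r$ of $\QD$ has exactly two incoming and two outgoing arrows. For a general Le-diagram this fails: in \cref{ex:Le_big} the unique mutable vertex $F_8$ has five incident arrows, the exchange relation is $\twF_8\twFp_8=\twF_2\twF_4\twF_6+\twF_1\twF_3$, and the corresponding face of $N(\bt)$ is not a square, so there is no square move to perform at $F_r$. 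Worse, no detour through other plabic graphs can help: every plabic-graph seed has cluster variables given by (twisted) Pl\"ucker coordinates of its face labels, whereas $\twFp_r$ is in general not a Pl\"ucker coordinate at all (in the example it is quadratic in Pl\"uckers). Hence there is no reduced plabic graph $N'$ whose twisted face minors realize the adjacent cluster $\btwpr$, and the proposed chart $T'$ cannot be constructed by your method. Your fallback ``BFZ-style'' argument is also not available as a black box: the upper bound of \cite{BFZ} is \emph{defined} as the intersection of the Laurent rings of the initial cluster and its one-step mutations, so the Laurent property asserted in \cref{lem:mutate} is exactly the input one must supply, and smoothness of $\PR_v^w$ does not by itself make the rational map $\Spec\C[(\btwpr)^{\pm1}]\dashrightarrow\PR_v^w$ regular or dominant.

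The paper's proof fills precisely this hole by direct computation: using \cref{prop:Lepath} it writes the matrix entries of $h=\phi_w(\gt B_-)$ as sums over paths in $\Gvec(D)$, converts to the $\btw$-variables via \cref{prop:MS}, and pairs the paths passing through the face $F_r$ so that each pair contributes the exchange binomial; the factor $\twF_r$ cancels against $\twF_r\twFp_r$, showing the map from the mutated torus extends to a regular map to $\Gr(n-k,n)$. A second step, which your proposal also has no substitute for, is needed to see that the image lies in the \emph{open} positroid variety rather than merely its closure: this uses $\Delta_{F_r}=1/\twF_r$ for $r\in\bound\sqcup\{0\}$ together with the Grassmann-necklace characterization of $\PR_v^w$ from \cite{KLS}. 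If you want to salvage your approach, you would have to prove directly that the regular functions $\btwpr$ define an open immersion of a torus into $\PR_v^w$, which essentially forces you back to the cancellation-of-denominators computation above; as written, the proposal has a genuine gap.
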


\begin{proof}
\def\f{\theta}
\def\Tint{T}

Let $\Tint:=\Spec(\C[(\btw)^{\pm1}])$ denote the initial cluster torus, and let $T'_r := \Spec(\C[(\btwpr)^{\pm1}])$ denote the mutated cluster torus in the $r$-th direction. The intersection $\Tint\cap T'_r$ is Zariski dense in $T'_r$, thus the map~\eqref{eq:inj_T} gives a rational map $T'_r \dashrightarrow \PR_v^w$. (This is also the map induced by the inclusion $\inj:\AQD \hookrightarrow \C[\PR_v^w]$.) The required statement is equivalent to showing that this rational map $T'_r \dashrightarrow \PR_v^w$ is in fact an inclusion $T'_r \hookrightarrow \PR_v^w$.  Each $\twF_i$ and $\twFp_r$ is a regular function on $\PR_v^w$ and hence we have a regular map $\PR_v^w \to  \Spec(\C[\btwpr]) \simeq \C^{\Jo}$.  It thus suffices to show that the rational map $T'_r \dashrightarrow \PR_v^w$ is a regular map on the torus $T'_r$. (Indeed, in this case, the composition $T'_r\to \PR_v^w\to \Spec(\C[\btwpr])$ is a regular map whose restriction to the open dense subset $T'_r\cap T$ agrees with the inclusion map $T'_r \hookrightarrow \Spec(\C[\btwpr])$. Therefore this composition coincides with the identity map on $T'_r$, and in particular the map $T'_r\to \PR_v^w$ is automatically injective.)

We begin by showing that the map $\Tint\cap T'_r \hookrightarrow \Gr(n-k,n)$ given by $\t \mapsto \gt \PJ$ extends to a regular map $\f_r: T'_r \to \Gr(n-k,n)$.  It suffices to write each matrix entry of $|h]$ (where $h=\phi_w(\gt B_-)$ as in \cref{sec:paths}) as an element of $\C[(\btwpr)^{\pm1}]$.  By Proposition \ref{prop:Lepath}, each such matrix entry is a sum of $\wt(P)$ over paths $P$ in $\Gvec(D)$.  We may restrict our attention to paths $P$ such that the monomial $\wt(P)$ contains $\twF_r$ in the denominator.  Let $r_1$ (resp., $r_2$) be the bottom-left (resp., top-right) vertex of the face of $G(D)$ labeled by $F_r$.  Then $\wt(P)$ contains $\twF_r$ in the denominator precisely when $P$ passes through both $r_1$ and $r_2$, and either contains the top-left or the bottom-right boundary of the face $F_r$.  We may group such paths into pairs $(P_1,P_2)$ where $P_1$ contains the top-left boundary of $F_r$, while $P_2$ contains the bottom-right boundary of $F_r$, and otherwise $P_1$ and $P_2$ agree.  By \cref{prop:Lepath}, the contribution of such a pair is
\[
\wt(P_1) + \wt(P_2) = 
\frac{M}{\twF_r} \cdot \left( \frac{\twF_c}{\twF_a\twF_b} + \frac{ \prod_{F_r\to F_j:j\neq a,b} \twF_{j}}{ \prod_{F_i\to F_r: i\neq c} \twF_{i}} \right) = 
\frac{M}{\twF_r} \cdot \left(\frac{\twF_r \twFp_r}{ \twF_a\twF_b\prod_{F_i\to F_r: i\neq c} \twF_{i}}  \right),  
\]
where $F_a,F_b,F_c,F_r$ are the labels of the faces adjacent to $t_r$ as in \cref{fig:neigh}, $M$ is a monomial in $\{\twF_a\}_{a \in \Jo\setminus\{r\}}$, and the products in the second term are taken over the arrows of the quiver $\QD$ not involving $F_a,F_b,F_c$.  The common factor $\twF_r$ cancels, and we have constructed our desired map $\f_r: T'_r \to \Gr(n-k,n)$.

The intersection $\Tint \cap T'_r$ is dense in $T'_r$, and $\f_r(\Tint \cap T'_r) \subseteq \PR_v^w$, so we must have $\f_r(T'_r) \subseteq \PRcl_v^w$, where $\PRcl_v^w$ is the Zariski closure of $\PR_v^w$. By~\cite[Equation~(9)]{MStwist}, we have $\Delta_{F_r}=\frac1{\twF_r}$ for all $r\in \bound\sqcup\{0\}$. (For example, compare the minors $\Delta_{F_r}(x)$ and $\Delta_{F_r}(\twist(x))$ for $x,\twist(x)$ from~\eqref{eq:x_twist} and $F_r$ from~\eqref{eq:twist_F_r_table}.) Thus $\Delta_{F_r}$ is nonzero on $\f_r(T'_r)$ for any $r\in\bound\sqcup\{0\}$. 
 By \cite[Section 5]{KLS}, $\PR_v^w$ is exactly the locus in $\PRcl_v^w$ where the Pl\"ucker variables indexed by the \emph{Grassmann necklace} are nonvanishing, and this Grassmann necklace is precisely the collection $\{F_r\}_{r\in\bound\sqcup\{0\}}$; see \cite{Pos} and~\cite[Proposition~4.3]{MStwist}.  We conclude that we have a regular map $\f_r: T'_r \to \PR_v^w$.
\end{proof}

\begin{example}
\renewcommand*{\arraystretch}{1.2}
Let $k=3$, $n=6$, $(v,w)=(s_2s_4,s_2s_1s_4s_3s_2s_5s_4s_3)$ as in \cref{ex:Le_big}. Using \cref{fig:Le_quiver} and \cref{prop:MS}, we find
\def\quadsmall{\quad}
\begin{equation}\label{eq:t_q_Ex}
t_{8} = \frac{1}{\twF_{8}},\quadsmall t_{6} = \frac{\twF_{8}}{\twF_{6}},\quadsmall t_{4} = \frac{\twF_{8}}{\twF_{4}},\quadsmall t_{3} = \frac{\twF_{4} \twF_{6}}{\twF_{3} \twF_{8}},\quadsmall t_{2} = \frac{\twF_{8}}{\twF_{2}},\quadsmall t_{1} = \frac{\twF_{2} \twF_{4}}{\twF_{1} \twF_{8}}, 
\end{equation}
and the mutation rule~\eqref{eq:x_mut} gives $\twFp_8=\frac{\twF_{2} \twF_{4} \twF_{6} + \twF_{1} \twF_{3}}{\twF_8}$. We now express $h=\phi_w(\gt B_-)$ both in terms of $\bt$ and in terms of $\btw$ using \cref{prop:Lepath} and Equation~\eqref{eq:t_q_Ex}:
\[|h]=\smat{
1 & 0 & 0 \\
0 & 1 & 0 \\
-\frac{1}{t_{1} t_{2}} & \frac{1}{t_{1}} & 0 \\
0 & 0 & 1 \\
\frac{1}{t_{1} t_{2} t_{3} t_{4}} & -\frac{1}{t_{1} t_{3} t_{4}} & \frac{1}{t_{3}} \\
\frac{t_{1} t_{3} t_{4} + t_{8}}{t_{1} t_{2} t_{3} t_{4} t_{6} t_{8}} & -\frac{1}{t_{1} t_{3} t_{4} t_{6}} & \frac{1}{t_{3} t_{6}}
}=\smat{
1 & 0 & 0 \\
0 & 1 & 0 \\
-\frac{\twF_{1}}{\twF_{4}} & \frac{\twF_{1} \twF_{8}}{\twF_{2} \twF_{4}} & 0 \\
0 & 0 & 1 \\
\frac{\twF_{1} \twF_{3}}{\twF_{4} \twF_{6}} & -\frac{\twF_{1} \twF_{3} \twF_{8}}{\twF_{2} \twF_{4} \twF_{6}} & \frac{\twF_{3} \twF_{8}}{\twF_{4} \twF_{6}} \\
\frac{\twF_{2} \twF_{4} \twF_{6} + \twF_{1} \twF_{3}}{\twF_{4} \twF_{8}} & -\frac{\twF_{1} \twF_{3}}{\twF_{2} \twF_{4}} & \frac{\twF_{3}}{\twF_{4}}
}. \]

\def\btwp{\btw'}
\noindent Thus the only entry of $h$ that has $\twF_8$ in the denominator is 
\[\frac{t_{1} t_{3} t_{4} + t_{8}}{t_{1} t_{2} t_{3} t_{4} t_{6} t_{8}}=\frac{\twF_{2} \twF_{4} \twF_{6} + \twF_{1} \twF_{3}}{\twF_{4} \twF_{8}} = \frac{\twFp_8}{\twF_4}.\]
In particular, all matrix entries of $h$ can be written as Laurent polynomials in the cluster $\btwp_8=\{\twF_1,\twF_2,\twF_3,\twF_4,\twF_6,\twFp_8\}$, in agreement with \cref{lem:mutate}.
\end{example}

Before we finish the proof, we need one more technical statement. Given an ice quiver $Q$ with vertex set  $V=V_f\sqcup V_m$ partitioned as in \cref{sec:cluster-algebra}, the \emph{extended exchange matrix} $\tilde B(Q)=(b_{r,r'})_{r\in V,r'\in V_m}$ of $Q$ has rows indexed by the vertices of $Q$ and columns indexed by the mutable vertices of $Q$.  We have $b_{r,r'} \in\{ 1,-1,0\}$, depending on whether there is an arrow $r \to r'$, or an arrow $r' \to r$, or no arrows between $r$ and $r'$ (assuming no two vertices of $Q$ are connected by more than one arrow).

\begin{proposition}\label{prop:fullrank}
The  extended exchange matrix $\tilde B(\QD)$ is of full rank (i.e., has rank $|V_m|$).
\end{proposition}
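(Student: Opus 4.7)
The plan is to reduce full rank of $\tilde B(\QD)$ to the explicit monomial isomorphism provided by \cref{prop:MS}, and then to verify this reduction by a combinatorial argument on face boundaries of $G(D)$.

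First, the formula $\twF_r = \prod t_{r'}^{-1}$ from \cref{prop:MS}, where the product ranges over all $r' \in \Jo$ such that $t_{r'}$ is weakly NW of $F_r$, realizes $\Phi : \bt \mapsto \btw$ as a monomial transformation of tori $(\C^\ast)^\Jo$. Once $\Jo$ is linearly ordered in a way refining the NW partial order on the dots $t_r$, the exponent matrix $N$ of $\Phi$ is lower triangular with $-1$'s on the diagonal, so $\Phi$ is an isomorphism. Writing the chamber ansatz $t_r = \twF_a \twF_b/(\twF_c \twF_r)$ of \cref{prop:MS} (with the convention $\twF_0 := 1$) and comparing exponents case-by-case using the four local rules of \cref{fig:quiver_rules}, I would verify that the matrix $A := -N^{-1}$ giving $t_r$ as a monomial in the $\twF_s$'s equals $B - I$, where $B$ is the skew-symmetric matrix recording \emph{every} arrow of \cref{fig:quiver_rules} supported on $\Jo$ (including arrows between frozen vertices, which are deleted from $\QD$); the identity uses skew-symmetry $B^T = -B$. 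In particular, $B - I$ is invertible. Now $\tilde B(\QD)$ is precisely the submatrix of $B$ on the mutable columns; a non-trivial dependence $\tilde B(\QD)\, c = 0$ with $c \in \C^{V_m}\setminus\{0\}$ extends by zero to a vector $\mathbf{c} \in \C^{\Jo}$ lying in $\ker B$ and supported on $V_m$, so it suffices to rule out any such $\mathbf{c}$.

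For this last step, I would pass to the matrix $M := -N^T \tilde B(\QD)$, which has the same column rank as $\tilde B(\QD)$ (since $N^T$ is invertible) and whose entries record the exponents of the cluster $y$-variables $y_r = \prod_s \twF_s^{b_{s,r}}$ pulled back to the Deodhar torus: explicitly, $M_{s',r} = -\sum_{s \succeq s'} b_{s,r}$. The diagonal entry $M_{r,r}$ is the signed sum of arrow contributions around the non-NW corners of $F_r$. Using the Le-property, every interior face of $G(D)$ is a simply-connected rectilinear polygon with NW corner at $t_r$ whose east-south boundary is a staircase; classifying its remaining corners into convex NE/SW ($+1$), convex SE ($-1$), and reflex NW ($+1$) types and applying the Euler-characteristic identity $\#\text{convex} - \#\text{reflex} = 4$ specialised to the staircase forces convex SE corners to outnumber reflex NW corners by exactly one, telescoping the sum to $M_{r,r} = -1$ for every $r \in V_m$. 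A parallel, more delicate analysis of the off-diagonal entries $M_{r',r}$ for $r \neq r'$ in $V_m$ (handling separately the contributions from the non-NW corners of $F_r$ and from the arrows at $t_r$ itself, which connect $F_r$ to the faces whose NW corners $t_{a(r)}, t_{b(r)}, t_{c(r)}$ lie strictly NW of $t_r$) establishes that $M|_{V_m \times V_m}$ is triangular with $-1$'s on the diagonal in a suitable linear extension of the NW order on $V_m$, hence invertible. The hard part will be the case analysis for the off-diagonal entries, which depends on the relative positions of $t_r$, $t_{r'}$, and the three dots $t_{a(r)}, t_{b(r)}, t_{c(r)}$. As a shortcut, one may alternatively appeal to the identification of $\QD$ with the quiver of the Postnikov plabic graph $N(\t)$ from the proof of \cref{prop:MS} and invoke the known full-rank property in that setting (cf.~\cite{MStwist, Mul}).
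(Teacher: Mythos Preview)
Your proposal has a genuine gap and an incorrect intermediate claim. The asserted identity $A = B - I$ fails: the row of $N^{-1}$ indexed by $r$ records only the exponents in the chamber ansatz $t_r = \twF_a\twF_b/(\twF_c\twF_r)$, i.e.\ only the arrows produced by the local rule \emph{at the vertex $t_r$}. But the $r$-th row of $B$ also contains the arrows produced by local rules at \emph{other} vertices $t_{r''}$ for which $F_r$ happens to be one of the four adjacent faces. Already in \cref{ex:Le_big} one has $t_4 = \twF_8/\twF_4$, so the $4$-th row of $N^{-1}$ is supported on $\{4,8\}$, whereas the $4$-th row of $B$ has nonzero entries at $1,3,8$ coming from the local rules at $t_1$ and $t_3$. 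Furthermore, even if the identity held, invertibility of $B-I$ is automatic for any real skew-symmetric $B$ (its eigenvalues lie on $i\R$), so it cannot by itself rule out a nonzero $\mathbf c\in\ker B$ supported on $V_m$.

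Your second paragraph, passing to $M=-N^T\tilde B(\QD)$ and aiming for triangularity of $M|_{V_m\times V_m}$, is the only part that could actually prove the statement, but you explicitly leave the ``hard part'' --- the off-diagonal case analysis --- undone, and the diagonal computation already has a sign slip (in \cref{ex:Le_big} one gets $M_{8,8}=+1$, not $-1$). The closing appeal to \cite{MStwist,Mul} is too vague to count as a proof. By contrast, the paper's argument is a short self-contained induction on $|\lambda|$: removing a boundary box either leaves $\tilde B(\QD)$ unchanged, deletes a zero row, or deletes a row with a single nonzero entry together with the corresponding column, reducing full rank for $D$ to full rank for $D'$.
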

\begin{proof}
We proceed by induction on the size of the Young diagram $\lambda$ of $D$, the case $|\lambda| = 0$ being trivial.  Suppose $|\lambda| >0$.  Let $D'$ be obtained from $D$ by removing a box $(i,j)$ adjacent to the boundary of $\la$.  If $D$ does not contain a dot inside the box $(i,j)$ then $\tilde B(\QD) = \tilde B(\QDp)$ so the result holds by induction.  Thus assume that $D$ contains a dot labeled $t_r$ inside the box $(i,j)$. Then $F_r$ is a boundary face. If either the row or the column of $(i,j)$ contains no other dots, then $\tilde B(\QD)$ is obtained from $\tilde B(\QDp)$ by removing the row indexed by $F_r$, and this row is 0; the result holds by induction.  Finally, suppose that both the row and the column of $(i,j)$ contains another dot.  Let $F_a,F_b,F_c,F_r$ be the labels of faces adjacent to $t_r$ as in \cref{fig:neigh}. Thus $F_a$ and $F_b$ are boundary faces. If $F_c$ is also a boundary face, then again $\tilde B(\QD)$ and $\tilde B(\QDp)$ differ by a $0$ row.  So assume that $F_c$ is an interior face, then $F_c$ becomes a boundary face in $G(D')$.  The matrix $\tilde B(\QD)$ satisfies $b_{F_r,F_c} = -1$, and this is the only nonzero entry in the row indexed by $F_r$.  The matrix $\tilde B(\QDp)$ is obtained from $\tilde B(\QD)$ by deleting the row of $F_r$ and the column of $F_c$.  It is clear that $\tilde B(\QD)$ has full rank if and only if  $\tilde B(\QDp)$ has full rank, so the result again holds by induction.
\end{proof}

\def\UCAQ{\overline{\Acal}(\QD)}

\begin{proof}[Proof of \cref{thm:main}]
By Proposition \ref{prop:fullrank} and \cite[Corollary 1.9]{BFZ}, the intersection of Laurent polynomial rings (called the \emph{upper bound} in \cite{BFZ})
$$
\C[(\btw)^{\pm1}] \cap \left( \bigcap_r \C[(\btwpr)^{\pm1}] \right)
$$
is equal to the \emph{upper cluster algebra} $\UCAQ$, defined to be the intersection of Laurent polynomial rings for \emph{all} clusters of $\AQD$.   By Lemma \ref{lem:mutate}, we have $\C[\PR_v^w] \subseteq \inj(\UCAQ)$.  By \cite{MSLA}, $\AQD$ is a locally acyclic cluster algebra, and by \cite[Theorem 4.1]{Mul}, we have $\AQD = \UCAQ$.  Recall that $\twist:\PR_v^w\xrasim\PR_v^w$ is an isomorphism  and $\inj=\twist\circ \ninj$. By \cref{cor:Lec}, we have $\inj(\AQD) \subseteq \C[\PR_v^w]$, and therefore $\inj(\AQD) = \C[\PR_v^w]$. We conclude that $\ninj(\AQD)=\C[\PR_v^w]$, completing the proof. 
\end{proof}

\bibliographystyle{alpha}
\bibliography{positroidcluster}
\end{document}